\documentclass[11pt,a4paper]{amsart}
\usepackage[T1]{fontenc}
\usepackage{lmodern}
\usepackage[margin=29mm]{geometry}
\usepackage{amsmath,amssymb,mathtools}
\usepackage{stmaryrd}
\usepackage{dsfont}
\usepackage{mathrsfs}
\usepackage[expansion=false]{microtype}
\usepackage{xcolor}
\usepackage{booktabs}
\usepackage[colorlinks=true,linkcolor=blue!45!black,citecolor=blue!45!black,urlcolor=blue!45!black]{hyperref}
\usepackage{enumitem}
\setlist[enumerate]{label=\textup{(\roman*)},leftmargin=*,itemsep=3pt}
\allowdisplaybreaks[1]
\renewcommand{\geq}{\geqslant}
\renewcommand{\leq}{\leqslant}
\renewcommand{\le}{\leqslant}
\renewcommand{\ge}{\geqslant}

\newtheorem{theorem}{Theorem}[section]
\newtheorem{proposition}[theorem]{Proposition}
\newtheorem{lemma}[theorem]{Lemma}
\newtheorem{corollary}[theorem]{Corollary}
\theoremstyle{definition}

\newtheorem{example}[theorem]{Example}
\newtheorem{remark}[theorem]{Remark}

\newcounter{proofcount}
\AtBeginEnvironment{proof}{\stepcounter{proofcount}}
\newtheorem{claim}{Claim}
\newtheorem*{claim*}{Claim}
\makeatletter                  
\@addtoreset{claim}{proofcount}
\makeatother                   

\newenvironment{claimproof}[1][Proof of Claim \theclaim.]
 {%
	\proof[#1]%

}
{%
	\endproof%
}

\newcommand{\Q}{\mathbb Q}
\newcommand{\Z}{\mathbb Z}
\newcommand{\N}{\mathbb N}
\newcommand{\alg}{\mathrm{alg}}
\newcommand{\sep}{\mathrm{sep}}

\newcommand{\bd}{\boldsymbol d}
\DeclareMathOperator{\Gal}{Gal}
\DeclareMathOperator{\Aut}{Aut}
\DeclareMathOperator{\Fix}{Fix}
\DeclareMathOperator{\Th}{Th}

\DeclareMathOperator{\charac}{char}
\DeclareMathOperator{\id}{id}

\newcommand{\ACFA}{\mathrm{ACFA}}
\title{PAC fields and difference fields with generic automorphisms}
\author[\"{O}.~Beyarslan]{\"{O}zlem Beyarslan}
\address{Bo\u{g}azi\c{c}i \"{U}niversitesi}
\email{ozlem.beyarslan@boun.edu.tr}
\thanks{The first author acknowledges support from the
T\"{u}bitak 1001 grant no.~124F359}

\author[P.~Kowalski]{Piotr Kowalski}
\address{Instytut Matematyczny\\
Uniwersytet Wroc{\l}awski\\
Wroc{\l}aw\\
Poland}
\email{pkowa@math.uni.wroc.pl}
\urladdr{http://www.math.uni.wroc.pl/~pkowa/}
\thanks{The second author acknowledges support from the
Narodowe Centrum Nauki grant no.~2018/31/B/ST1/00357
and the T\"{u}bitak 1001 grant no.~124F359.}
\date{}

\subjclass[2020]{Primary 03C60, 12L12;
Secondary 03C10, 12E30, 12H10}

\keywords{Model companion, generic automorphism, bounded PAC field}
\begin{document}
\begin{abstract}
We study the existence of model companions after adjoining an
automorphism to field theories in two complementary settings.
For bounded PAC fields of
characteristic zero containing all roots of unity and admitting a
cyclic Galois quotient of prime order, a suitable choice of countably many constants gives a
model-complete field theory which has no model
companion after adding an automorphism fixing the named constants. This applies, in particular, when the absolute Galois group
is free profinite or free pro-$p$ of finite positive rank. Secondly, for every completion of $\ACFA$ in
every characteristic, adjoining a second commuting automorphism has
no model companion. The two explicit obstructions use a common
ordered product of inertia automorphisms along an infinite orbit,
combined respectively with an equivariant PAC completion and an
algebraically closed difference-field embedding.
\end{abstract}

\maketitle
\tableofcontents

\section{Introduction}\label{sec:introduction}

The existence of $\ACFA$, the model companion of the theory of
fields with an automorphism, naturally raises the question of
adjoining further automorphisms. Hrushovski showed in the 1990s that the theory of fields
with two commuting automorphisms has no model companion;
accounts of his argument appear in
\cite[Section~3]{KikyoSurvey} and
\cite[Section~1.11]{Survey}. 
Chatzidakis and Pillay asserted in the last two bullet points
of \cite[Section~3.11]{CP} that analogous nonexistence results
hold for $\ACFA$ and for the theory of pseudofinite fields
expanded by an automorphism.

Chatzidakis subsequently gave a variant of Hrushovski's proof
in \cite[Section~1.11]{Survey}, but emphasized in
\cite[Remark~1.12]{Survey} that this argument does not by itself
settle the questions for pseudofinite fields or completions of
$\ACFA$. The difficulty is to realize the required difference
equations in an extension whose reduct still satisfies the
prescribed theory. Our proofs follow Chatzidakis's strategy, comparing an
existential condition forbidden at certain finite periods with
its realization along an infinite orbit.
 In the $\ACFA$ case, the construction requires
extending two commuting automorphisms to an algebraically closed
field while preserving commutativity. In the pseudofinite case,
the corresponding construction must also respect the procyclic
Galois action.

We now state the two main results. Recall that a field is
\emph{PAC} if every absolutely irreducible variety over it has
a rational point. A field $F$ is \emph{bounded} if it has only
finitely many finite separable extensions of each degree inside
a fixed separable closure $F^{\sep}$. We write $\Gal(F)$ for the absolute
Galois group of $F$.
For a prime $\ell$, let $C_\ell$ denote the cyclic group
of order $\ell$.

\begin{theorem}[No bounded PACA]\label{b:thm:main}
Let $F$ be a bounded PAC field of characteristic zero containing
all roots of unity. Suppose that, for some prime $\ell$, there is
a continuous surjection
\[
\Gal(F)\twoheadrightarrow C_\ell.
\]
Then there is a countable tuple $\bd$ from $F$ such that
$\Th(F,\bd)$
is model-complete and the theory
\[
\Th(F,\bd)\cup
\{\sigma\text{ is a field automorphism and}\ \sigma(\bd)=\bd\}
\]
has no model companion.
\end{theorem}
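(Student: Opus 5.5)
We follow Chatzidakis's strategy from \cite[Section~1.11]{Survey}: the model companion is refuted by an existential condition that must be forbidden in every existentially closed model at certain finite periods, yet is realized along an infinite orbit in some existentially closed model.

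\textbf{Step 1: choice of constants.} Since $F$ is bounded, for each $n$ there are only finitely many extensions of $F$ of degree $n$ in $F^{\sep}$. Each is generated by a root of some irreducible polynomial. We let $\bd$ enumerate the coefficients of chosen defining polynomials for all of these extensions (countably many in total), together with the roots of unity, which are needed for Kummer theory. The standard elementary-equivalence theorem for PAC fields then says two such fields are elementarily equivalent over a common subfield when their Galois groups are compatibly isomorphic over it. With all finite extensions named, the restriction $\Gal(F)\to\Gal(\Q(\bd)^{\alg}\cap F / \dots)$ is pinned down. Hence $\Th(F,\bd)$ is model-complete: every embedding of models is regular and induces an isomorphism of absolute Galois groups, so it is elementary. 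Any automorphism $\sigma$ fixing $\bd$ therefore fixes every finite extension of $F$ setwise, so it acts on $\Gal(F)$ by an inner-type automorphism trivial on each finite quotient.

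\textbf{Step 2: the existential obstruction.} Fix a $C_\ell$-quotient $L=F(\sqrt[\ell]{a})$, where we may take $a\in\Q(\bd)$ by Kummer theory. We consider formulas $\varphi_n(x)$ saying that $\sigma^n(x)=x$ and that some Kummer element built from $x,\sigma x,\dots$ is, or is not, an $\ell$-th power. The intended dichotomy is the following.
\begin{enumerate}
\item At finite periods, a norm- or product-type element $\prod_{i<n}\sigma^i(\cdot)$ is forced to be an $\ell$-th power, or forced not to be, by Galois-theoretic constraints on the action of $\sigma$ on the $C_\ell$-extension. This uses that $\sigma$ fixes $a$ and the roots of unity.
\item Along an infinite orbit, the ordered product of inertia automorphisms mentioned in the abstract lets us realize every finite stage of the opposite behaviour.
\end{enumerate}
If a model companion $T^*$ existed, its models would be existentially closed. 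Compactness would then produce an element of infinite orbit satisfying all finite-stage conditions. These conditions are type-definable, and by model completeness of $T^*$ they would have to be implied by a finite-period condition. This contradicts (i).

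\textbf{Step 3: the construction (main obstacle).} The real work is to build, from a model $(K,\sigma)$ of the base theory, an extension $(K',\sigma')$ with $K'\models\Th(F,\bd)$ that realizes the infinite-orbit configuration. The plan has three parts.
\begin{enumerate}
\item Adjoin transcendentals $t_i$ ($i\in\Z$), with $\sigma$ shifting them.
\item Choose a $\sigma$-equivariant Galois extension in which the inertia groups at $t_i$ generate the prescribed $C_\ell$ behaviour. The ordered product of inertia generators is designed to be $\sigma$-compatible.
\item Pass to an equivariant PAC completion. Concretely, we take a $\sigma$-invariant subfield of $\tilde K^{\alg}$ whose absolute Galois group is a copy of $\Gal(F)$, found via the embedding property of $\Gal(F)$ (finitely generated, hence it has the embedding property after naming $\bd$). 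We then make it PAC by the standard regularity-preserving PAC construction carried out $\sigma$-equivariantly.
\end{enumerate}
The delicate point is keeping $\sigma$ extendable while the Galois group stays isomorphic to $\Gal(F)$ over $\bd$. We expect this to need boundedness, through the finite generation of $\Gal(F)$ and the Iwasawa/embedding property. It also needs characteristic zero, so that the inertia groups are procyclic and tame. We would handle the free profinite and free pro-$p$ cases first as a test, since there the embedding problems are solvable outright.
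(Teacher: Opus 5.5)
Your Step~1 matches the paper: name countably many constants so that $\Th(F,\bd)$ is model-complete, and add a non-$\ell$-th power $c$ and $\mu_\infty$. Steps~2 and~3, however, leave both essential ingredients unsupplied.

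\textbf{Step~2 has no formula and no obstruction.} You never fix an existential formula, and you even leave open whether the product ``is, or is not'' an $\ell$-th power. So there is no finite-period obstruction and nothing definite to realize later. The paper needs a further idea here: an auxiliary prime $q\ne\ell$, a named primitive $q$-th root of unity $\zeta$ (so $\sigma(\zeta)=\zeta$), and the formula
\[
\Phi_{\mathrm{PAC}}(z)=\exists x\,\exists y\,\bigl[\sigma(x)=\zeta x\wedge(x-z)(\zeta x-z)\ne0\wedge x-z=c(\zeta x-z)y^\ell\bigr].
\]
Suppose $\sigma^n(b)=b$ with $\gcd(n,q)=1$, and put $f=(x-b)/(\zeta x-b)$. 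Then $\prod_{j<q}\sigma^{jn}(f)=1$, because $j\mapsto jn$ permutes $\Z/q\Z$. The same product is $c^q$ times an $\ell$-th power, so $c\in(K^\times)^\ell$, a contradiction.

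Your closing argument is also off: no ``type-definable condition implied by a finite-period condition'' is involved. What is needed is a model of $S_{\bd}$ containing elements $b_m$ of exact period $\ell^m$ (prime to $q$), with $\sigma$ fixing an elementary submodel $M$ pointwise. You never construct this, and it needs the same completion machinery as below. Then $\neg\Phi_{\mathrm{PAC}}(b_m)$ passes by \L o\'s to an element $b^*$ of infinite orbit. Realizing $\Phi_{\mathrm{PAC}}(b^*)$ in an $S_{\bd}$-extension then contradicts existential closedness.

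\textbf{Step~3 would not go through as written.} Boundedness makes $\Gal(F)$ small, not finitely generated; for instance, a product over all primes $p$ of free pro-$p$ groups of unbounded finite ranks is small and projective. Finite generation does not give the embedding property either. An abstract copy of $\Gal(F)$ is also beside the point. $K\preccurlyeq L$ requires $L/K$ regular with restriction $\Gal(L)\to\Gal(K)$ an isomorphism, and $\sigma$ must extend to $L$.

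Your ``$\sigma$-equivariant PAC construction'' is exactly what has to be proved, and the missing idea is to make everything commute. The paper proceeds as follows.
\begin{enumerate}
\item Because $\sigma$ fixes $M\preccurlyeq K$ pointwise, $K^{\alg}=KM^{\alg}$, and $\sigma$ extends to $\rho$ on $K^{\alg}$ centralizing $\Gal(K)$. Your remark about an ``inner-type'' action trivial on finite quotients is unjustified as stated and is not what is needed.
\item Coefficientwise actions on $\bigcup_m K^{\alg}((x^{1/m}))$ give $s\supseteq\rho$ with $s(x)=\zeta x$. They also give lifts $d_i$ of topological generators $\gamma_i$ of $\Gal(K)$ that commute with $s$. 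Hensel's lemma gives $\alpha$ with $\alpha^\ell=(x-b)/(\zeta x-b)$, fixed by every $d_i$.
\item The inertia elements $s^kg_0s^{-k}$ lie above the pairwise distinct points $\zeta^{-k}\sigma^k(b)$. Their ordered product $\eta$ therefore converges, is $s$-invariant, and satisfies $\eta(\alpha)=\omega\alpha$.
\item Take $\beta^\ell=c$ and write $d_i(\beta)=\omega^{a_i}\beta$. Then $\tau_i:=d_i\eta^{a_i}$ commute with $s$ and fix $y=\alpha/\beta$.
\item In a saturated model of $\ACFA_I$, extend $s$ to $S$ commuting with every $\tau_i$. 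Then $S$ centralizes $\Gal(N)$, where $N$ is the PAC common fixed field.
\item By projectivity, $\Gal(N)\to\Gal(K)$ has a section $j$. For any such $j$, the field $L=(N^{\alg})^{j(\Gal(K))}$ is automatically $S$-invariant, and $K\preccurlyeq L$.
\end{enumerate}
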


Under the stated field
hypotheses, the theorem includes absolute Galois groups that
are free profinite or free pro-$p$ of finite positive rank;
see Corollary~\ref{b:cor:free}.

The pseudofinite case, corresponding to
$\Gal(F)\cong\widehat{\Z}$ and included here under the same
hypotheses, is quite special and will be treated separately
in Feyza Nur Okudan's PhD thesis.

Our second result concerns an additional automorphism of a
model of a fixed completion of $\ACFA$.

\begin{theorem}[No ACFAA]
\label{a:thm:main}
Let $T$ be a completion of $\ACFA$ (arbitrary characteristic) and let $\sigma$ be its
distinguished automorphism. Then the theory
\[
T\cup
\{\tau\text{ is a field automorphism and}\
  \tau\sigma=\sigma\tau\}
\]
has no model companion in the language expanded by a
new function symbol $\tau$.
\end{theorem}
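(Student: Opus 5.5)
We follow Chatzidakis's variant of Hrushovski's argument and argue by contradiction. Suppose the theory $T_\tau:=T\cup\{\tau\text{ is an automorphism},\ \tau\sigma=\sigma\tau\}$ has a model companion $T^*$. Every model of $T_\tau$ embeds into a model of $T^*$, so $T^*$ is the theory of existentially closed models of $T_\tau$ (these are elementary in the $\sigma$-reduct because $T$ is model complete). We will write down a family of existential formulas $\varphi_n(x)$, with $n$ ranging over positive integers, with two properties. First, in every existentially closed model $(K,\sigma,\tau)$, the formula $\varphi_n$ holds of some element $a$ exactly when $a$ has a certain \emph{finite-period} behaviour, for example $\tau^n(a)=a$ together with a twisted relation such as $\sigma(a)=\tau(a)$. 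Second, by compactness, there is an element which satisfies the whole family in some elementary extension of a model of $T^*$, but this element has an infinite $\tau$-orbit. Then $T^*$ would prove that a type omitting all finite periods is realized. This contradicts the fact that, in an existentially closed model, the condition can only be satisfied at finite periods.

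The combinatorial core is the following. Take $b$ with $\tau$-orbit $\{b_i=\tau^i b: i\in\Z\}$ consisting of algebraically independent elements. We want an algebraic extension in which the element $c=\sqrt{\ }$ or $\ell$-th root of a suitable product $\prod_i b_i^{e_i}$ carries an inertia automorphism. This is the ``ordered product of inertia automorphisms along an infinite orbit'' mentioned in the abstract. Concretely, in the Galois group of $\Q(b_i)^{\alg}$ (or over the prime field), choose inertia elements $g_i$ at the primes $b_i$. Extend $\sigma$ so that $\sigma$ acts like an ordered infinite product $\cdots g_{-1}g_0g_1\cdots$, which converges profinitely because each $g_i$ acts trivially on roots of $b_j$ for $j\neq i$. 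Then $\tau\sigma\tau^{-1}\sigma^{-1}$ is a product of conjugates that we can make trivial, so commutativity holds.

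For finite period $n$, the analogous product $g_0\cdots g_{n-1}$ around a cycle fails to commute with the shift. Its commutator is a nontrivial product (a conjugation obstruction, as in Hrushovski's argument). This yields an existential sentence that holds in the infinite-orbit configuration but fails at every finite period. Equivalently, the formula ``there is $y$ with $y^\ell=$ (product) and $\sigma(y)=\zeta y$'' is consistent with $\tau$ of infinite order but becomes inconsistent with $\tau^n=\mathrm{id}$ on $x$ for each $n$.

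The main obstacle is realizing this configuration inside a model of $T$ itself. We must extend the commuting pair $(\sigma,\tau)$ from the algebraic extension to an algebraically closed field $\Omega$. We must also ensure that $(\Omega,\sigma)$ embeds into a model of the prescribed completion $T$, which is determined by the action of $\sigma$ on the algebraic closure of the prime field. We would first fix $\sigma$ on the prime field's algebraic closure according to $T$. We would then take the transcendentals $b_i$ over it and use the fact that, over an algebraically closed base, Galois extensions ramified at independent primes are linearly disjoint, so prescribing $\sigma$ and $\tau$ on the compositum is consistent. Next, we extend to $\Omega$ by the standard lemma that commuting automorphisms of a field extend to commuting automorphisms of its algebraic closure after adding an independent transcendental. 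If that lemma fails, we would take $\Omega$ to be the algebraic closure of a free $\Z^2$-difference field and extend iteratively along a tower. Finally, we embed $(\Omega,\sigma)$ into a model of $T$ with $\tau$ extended. To do this, take $(\Omega,\sigma)\preceq$ an $\aleph_1$-saturated model $M$ of $T$. Then obtain $\tau$ on $M$ by amalgamation, as in the independence theorem in $\ACFA$: $\tau$ is a $\sigma$-isomorphism between $\Omega$ and $\tau(\Omega)$, and it extends by saturation and homogeneity because $\Omega$ is algebraically closed, so its $\sigma$-type over $\emptyset$ is determined by its quantifier-free diagram in the completion.

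Once the configuration sits in a model of $T_\tau$, the existentially closed model contains the witness. A compactness argument with $\tau^n$ then gives the contradiction. Characteristic $p$ requires $\ell\neq p$, which we can choose freely, with Artin–Schreier handled only if needed.
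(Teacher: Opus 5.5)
Your proposal has genuine gaps in both halves of the Chatzidakis--Hrushovski scheme.

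\textbf{The overall structure and the obstruction (B).} The compactness step in your first paragraph is set up backwards. What is needed is a \emph{single} existential formula $\Phi(z)$, with parameters from a fixed model of $T$, such that:
(B) $\neg\Phi(b)$ holds in \emph{every} model of $S_T$ whenever $b$ has period $n$ in a suitable infinite set;
(C) some model of $S_T$ contains elements of all those periods;
(A) over any model in which $b$ has infinite orbit, there is an $S_T$-extension realizing $\Phi(b)$.
Then $\neg\Phi$ passes to an ultraproduct by \L o\'s, and (A) contradicts existential closedness.

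Your (B) is only a heuristic about one construction (the nontrivial commutator of $g_0\cdots g_{n-1}$). That says nothing about arbitrary witnesses in arbitrary extensions. Worse, with your roles (twist $\sigma(y)=\zeta y$, periodicity $\tau^n(b)=b$) there is no obstruction for some completions. One gets $\tau^n(y)=\xi y^{\pm1}$ with $\xi\in\mu_\ell$, and commutation yields only $\sigma(\xi)\xi^{-1}=\tau^n(\zeta)\zeta^{\mp1}$. The action of $\sigma$ on $\mu_\ell$ is dictated by $T$. For example, if $\sigma|_{\Q^{\alg}}$ is complex conjugation, then $\sigma$ moves $\mu_\ell$ for every odd $\ell$, so the left side ranges over all of $\mu_\ell$ and nothing is contradicted. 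And $\ell=2$ never produces a mismatch.

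The paper reverses the roles:
\begin{enumerate}
\item The twist $\tau(y)=\zeta y$ is by the free automorphism. The starting model has $\tau=\id$, so $\tau$ fixes $\mu_\ell$ and $\xi$ cancels.
\item Periodicity is along $\sigma$ with $n\equiv1\pmod r$, which forces $\sigma(\zeta)=\zeta^{\varepsilon}$.
\item A completion-dependent choice of parameters makes this impossible. If $\sigma(\zeta)\neq\zeta$, take $c_0=0$, $c_1=1$, $\varepsilon=1$. If $\sigma(\zeta)=\zeta$, take a $\sigma$-two-cycle $c_0,c_1$; then $\sigma$ inverts the radicand $(x+z-c_0)/(x+z-c_1)$, so $\varepsilon=-1$, and $\ell$ odd gives the mismatch.
\end{enumerate}
Correspondingly, (C) needs $\tau=\id$ and $\sigma$-cycles of all lengths $n\equiv1\pmod r$. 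These cycles must be algebraically independent over $A_0$ so that the orbit conditions survive the ultraproduct.

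\textbf{The realization step (A).} Step (A) must extend the given existentially closed model $N$ and realize $\Phi$ at the given $b\in N$. A configuration built from scratch over $\Q(b_i)$ says nothing about $N$. Moreover, $N$ is algebraically closed. So a formula like $\exists y\,(y^\ell=\prod_i\tau^i(b)^{e_i}\wedge\sigma(y)=\zeta y)$ already has all its candidate witnesses in $N$, and its truth value cannot change in any extension.

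The paper instead adjoins a fresh $x$ fixed by both automorphisms and uses $y^\ell=(x-a)/(x-d)$ over $N(x)$. This is ramified exactly at the points $a=c_0-b$ and $d=c_1-b$ of $N$. The inertia elements lie above the points $\sigma^i(a)$, the ordered product $\eta$ commutes with $\sigma_0$, and one sets $\tau_1=\eta^{1-j}\tau_0$. The commuting lifts $\sigma_0,\tau_0$ come for free from the coefficientwise action on $N((x^{\Q}))$. This works precisely because $N$ is already algebraically closed and $x$ is fixed.

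Your ``standard lemma'' is in fact false. On $\mathbb{C}(t)$, let $\sigma$ be the identity on $\mathbb{C}$ with $\sigma(t)=2t$. Let $\tau(t)=t$, with $\tau|_{\mathbb{C}}$ any automorphism sending $\sqrt2$ to $-\sqrt2$. These commute. Yet any lifts satisfy $\sigma(\sqrt t)=\pm\sqrt2\sqrt t$ and $\tau(\sqrt t)=\pm\sqrt t$, whence $\sigma\tau(\sqrt t)=-\tau\sigma(\sqrt t)$. Adjoining transcendentals cannot help, since lifts preserve the relative algebraic closure of $\mathbb{C}(t)$.

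Your final step is correct: extending $\tau$ through a saturated, strongly homogeneous model of $\ACFA$, using that $\ACFA$ together with the quantifier-free diagram of an algebraically closed difference field is complete. This is Lemma~\ref{a:lem:acfa}.
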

For a theory $T$, we write $T_\tau$ for the theory of models
of $T$ equipped with an automorphism $\tau$. Our results support the conjecture that $T_\tau$ has no model
companion whenever $T$ is unstable, complete, and model-complete
\cite[Introduction]{KikyoSurvey}. Kikyo proved this assuming
amalgamation for $T_\tau$ \cite[Theorem~3.3]{Kikyo}.
Failure of amalgamation for $\ACFA_\tau$ is reported in
\cite[Section~4]{Kikyo} and \cite[Introduction]{KikyoShelah},
but these references do not establish it for every completion
in every characteristic. Chatzidakis and Pillay asked whether the theory of pseudofinite
fields has the amalgamation property for automorphisms
(PAPA) \cite[ end of Section~3]{CP}.
Kikyo and Shelah subsequently stated that it was unknown whether
the theory of pseudofinite fields with an automorphism has
the amalgamation property \cite[Introduction]{KikyoShelah}.
A related difficulty concerning the existence of model companions
for complete theories of pseudofinite fields with an automorphism
is discussed in \cite[Remark~1.12]{Survey}.

The paper is organized as follows.
In Section~\ref{sec:tools}, we develop the common algebraic tools.
In Section~\ref{sec:chatzidakis-argument}, we recall Chatzidakis's
variant of Hrushovski's argument and explain the proof strategy,
including the requirement that realizing extensions preserve
the prescribed reduct theory.
In Sections~\ref{sec:bounded} and~\ref{sec:acfa}, we prove
Theorems~\ref{b:thm:main} and~\ref{a:thm:main}, respectively.
In Section~\ref{sec:mc-reducts}, we discuss the transfer of model
companions to completions and the
relation between model-completeness and the existence of
weakly saturated existentially closed models.

\medskip
\noindent
\textbf{Use of AI.}
The authors used ChatGPT (GPT-6 Astra in Pro mode) to assist in identifying examples, developing proofs, and drafting portions of the text. The authors subsequently refined and streamlined the arguments and assume full responsibility for the final content.

\section{Common conventions and algebraic tools}\label{sec:tools}
This section collects the algebraic tools used in the subsequent
arguments. We first recall the required facts about 
Kummer extensions, Puiseux and Hahn series, and ramification.
We then construct commuting lifts of automorphisms, describe the
behavior of inertia under restriction and conjugation, and prove
a convergence result for ordered products in profinite groups.
Together, these tools allow us to prescribe the action on a chosen root in a Kummer extension while preserving commutation with another automorphism,
providing the algebraic input for the realization arguments in
Sections~\ref{sec:chatzidakis-argument}, ~\ref{sec:bounded}, and~\ref{sec:acfa}.

\subsection{Preliminaries from field theory and valuation theory}
We collect here necessary classical algebraic facts focusing on Kummer theory and ramification theory.

Let $F$ be a field. For every positive integer $n$, we denote
\[
\mu_n(F):=\{a\in F^\times\mid a^n=1\},
\qquad
\mu_\infty(F):=\bigcup_{n\ge1}\mu_n(F).
\]
Thus $\mu_\infty(F)$ is the multiplicative group of all
roots of unity in $F$. In characteristic zero, we use
the abbreviations
\[
\mu_n:=\mu_n(\Q^{\alg}),
\qquad
\mu_\infty:=\mu_\infty(\Q^{\alg}),
\]
where $\Q^{\alg}$ is a fixed algebraic closure of $\Q$.

A field is \emph{bounded} if it has only finitely many
separable extensions of each finite degree inside a fixed
separable closure. Equivalently, its absolute Galois group
is small, that is, it has only finitely many open subgroups
of each finite index.

We recall two facts from Kummer theory. Let $K$ be a field,
let $\ell\ne\operatorname{char}(K)$ be prime, and suppose
that $K$ contains a primitive $\ell$-th root of unity $\omega$.

\begin{enumerate}[label=\textup{(\roman*)}]
\item
If $c\in K^\times\setminus(K^\times)^\ell$ and $\beta^\ell=c$,
then $K(\beta)/K$ is cyclic of degree $\ell$, with automorphisms
\[
\beta\longmapsto\omega^j\beta,
\qquad j\in\mathbb Z/\ell\mathbb Z.
\]
Conversely, every cyclic extension of $K$ of degree $\ell$
is obtained in this way and
\[
\operatorname{Gal}(K)\twoheadrightarrow C_\ell
\quad\Longleftrightarrow\quad
K^\times/(K^\times)^\ell\ne\{1\},
\]
where $\operatorname{Gal}(K)$ denotes the absolute Galois group
and the surjection is required to be continuous
\cite[Section~9.24, Tag~09I6]{Stacks}.

\item
Suppose that the classes of $f_1,\ldots,f_r\in K^\times$
are linearly independent in the $\mathbb F_\ell$-vector space
$K^\times/(K^\times)^\ell$. Choose $\alpha_i$ with
$\alpha_i^\ell=f_i$ and put $E=K(\alpha_1,\ldots,\alpha_r)$.
Then
\[
[E:K]=\ell^r,
\qquad
\operatorname{Gal}(E/K)\cong(C_\ell)^r,
\]
and the monomials
\[
\alpha_1^{e_1}\cdots\alpha_r^{e_r},
\qquad 0\le e_i<\ell,
\]
form a $K$-basis of $E$. Every assignment
$\alpha_i\mapsto\omega^{j_i}\alpha_i$ therefore defines
a $K$-automorphism of $E$. For an infinite independent family $(f_i)_{i\in I}$,
the same assertions apply to every finite subfamily, and arbitrary
choices of the multipliers define an automorphism of the field
generated by all the radicals. More generally, if
$\gamma\in\operatorname{Aut}(K)$ and
$\gamma(f_i)=f_{\pi(i)}$ for a permutation $\pi$ of $I$,
then any assignments
\[
\alpha_i\longmapsto\xi_i\alpha_{\pi(i)},
\qquad \xi_i\in\mu_\ell(K),
\]
extend $\gamma$ to an automorphism of that field.
These are consequences of the Kummer correspondence
\cite[Theorem~5.30 and Remark~5.32]{MilneFT}.
\end{enumerate}
Let $k$ be a field and $x$ an indeterminate. The
\emph{Puiseux series field} is
\[
P(k,x):=\bigcup_{m\ge1}k((x^{1/m})),
\]
where the fractional powers are chosen compatibly.
The \emph{Hahn series field}
\[
H(k,x):=k((x^{\Q}))
\]
consists of formal sums $\sum_{r\in\Q}a_rx^r$ with
$a_r\in k$ and well-ordered support
$\{r\in\Q\mid a_r\ne0\}$, equipped with the usual series
operations. There is a natural embedding
$P(k,x)\hookrightarrow H(k,x)$ whose image consists
precisely of the series with support contained in
$\frac1m\Z$ for some $m\ge1$.
If $k$ is algebraically closed of characteristic zero,
the Newton--Puiseux theorem says that $P(k,x)$ is an
algebraic closure of the field of Laurent series $k((x))$.
In characteristic $p>0$, however, $P(k,x)$ is not
algebraically closed: the equation $Y^p-Y=x^{-1}$
has the Hahn-series solution
\[
\sum_{n\ge1}x^{-1/p^n}\in H(k,x)\setminus P(k,x);
\]
see \cite[Section~1]{Poonen}. For algebraically closed $k$ in any characteristic,
$H(k,x)$ is algebraically closed, since its value group
$\Q$ is divisible \cite[Corollary~4]{Poonen}.

Let $k$ be algebraically closed, put $K=k(x)$, and fix
$\Omega=K^{\alg}$. For $a\in k$, the \emph{point valuation}
$v_a=\operatorname{ord}_{x=a}$ is normalized by $v_a(x-a)=1$.
At infinity use $v_\infty=\operatorname{ord}_{u=0}$, where $u=x^{-1}$.
Their valuation rings are $k[x]_{(x-a)}$ and $k[u]_{(u)}$,
respectively, and their residue fields are $k$.

For a valuation $w$ on a field $E$ in general, we write
\[
\mathcal O_w=\{z\in E:w(z)\ge0\},\qquad
\mathfrak m_w=\{z\in E:w(z)>0\},\qquad
\kappa(w)=\mathcal O_w/\mathfrak m_w,
\]
with $w(0)=\infty$. If $K\subseteq E$ is a field extension, then a \emph{prolongation} of a valuation $v$ of
$K$ to $E$ is a valuation $w$ with
$\mathcal O_w\cap K=\mathcal O_v$. For a finite extension $E/K$, let $w$ prolong a point
valuation $v$ of $K$, and normalize $w$ so that
$w(E^\times)=\Z$.
The \emph{ramification index} is the positive integer
\[
e(w/v):=[w(E^\times):w(K^\times)].
\]
Since $v(K^\times)=\Z$, this means that
\[
w(f)=e(w/v)\,v(f)\qquad\text{for all $f\in K^\times$}.
\]
In particular, at a finite point $a$ we have
$e(w/v_a)=w(x-a)$.

We often just say ``point'' for ``point valuation''. Since the residue fields above a point are finite extensions of the
algebraically closed field $k$, they are again $k$. In this setting
the point is \emph{unramified} in $E/K$ if every prolongation has
$e(w/v)=1$, and \emph{ramified} otherwise. It is \emph{totally
ramified} if there is a unique prolongation and its ramification
index is $[E:K]$.

Every valuation on a field admits a prolongation to any
algebraic field extension \cite[Tag~00IA]{Stacks}.
In an algebraic extension, the induced residue-field extension
is algebraic; this follows from the finite case by passing
to finite subextensions.
Consequently, every prolongation of a point valuation of $K$
to $\Omega$ has residue field $k$, since $k$ is algebraically
closed. 

Choose a prolongation of a point valuation $v$ on $K$ to $\widetilde v$ on $\Omega$.
For $G=\Gal(\Omega/K)$, its \emph{decomposition group} and
\emph{inertia group} are
\[
D_{\widetilde v}
=\{g\in G:g(\mathcal O_{\widetilde v})
                         =\mathcal O_{\widetilde v}\},\qquad
I_{\widetilde v}
=\ker\!\left(D_{\widetilde v}\to
               \Aut(\kappa(\widetilde v)/k)\right).
\]
The same definitions apply to a finite Galois extension $E/K$,
using a prolongation $w$ on $E$ and $G=\Gal(E/K)$.
An \emph{inertia automorphism above $a$} is an element of
$I_{\widetilde v}$ for a chosen prolongation of $v_a$.

For a finite Galois extension $E/K$, the group $\Gal(E/K)$
acts transitively on the prolongations of $v_a$ to $E$,
with $g$ sending $w$ to $w\circ g^{-1}$.
These prolongations correspond to the points above $a$
on the normalization of $\mathbb P^1_k$ in $E$, that is the
nonsingular projective curve $C$ with function field $k(C)=E$,
together with the finite morphism $\pi:C\longrightarrow\mathbb P^1_k$
corresponding to the inclusion $k(x)\subseteq E$.

The degree formula for the finite normalization
of the curve gives
\[
[E:K]=\sum_{w\mid v_a}e(w/v_a)[\kappa(w):k].
\]
All residue degrees are $1$, and the ramification indices are equal.
Consequently,
\[
|I_w(E/K)|=|D_w(E/K)|=e(w/v_a).
\]
In particular, inertia at an unramified point is trivial.

Let $E/K$ be a finite Galois extension inside $\Omega$,
let $\widetilde v$ be a prolongation to $\Omega$ of a point
valuation $v$ of $K$, and let $w$ be its restriction to $E$.
Restriction of automorphisms induces surjective homomorphisms
\[
D_{\widetilde v}\twoheadrightarrow D_w(E/K),
\qquad
I_{\widetilde v}\twoheadrightarrow I_w(E/K)
\]
\cite[Tag~0BSX]{Stacks}.
In particular, every element of the finite inertia group
$I_w(E/K)$ lifts to an element of $I_{\widetilde v}$.

For a monic polynomial $P(T)\in K[T]$ of degree $n$, with roots
$\alpha_1,\ldots,\alpha_n$ in an algebraic closure of $K$, counted
with multiplicity, its \emph{discriminant} is
\[
\operatorname{disc}(P)
:=\prod_{1\leq i<j\leq n}(\alpha_i-\alpha_j)^2.
\]
We shall use the following standard discriminant criterion
\cite[Tags~0BJF, 0EXR, and~0BTF]{Stacks}.
Let $(K,v)$ be a discretely valued field with valuation ring
$\mathcal O_v$, let $P\in\mathcal O_v[T]$ be monic and separable,
and let $E/K$ be its splitting field.
If $v(\operatorname{disc}(P))=0$, then
$I_w(E/K)=\{1\}$ for every prolongation $w$ of $v$ to $E$,
and hence $E/K$ is unramified at $v$.

\subsection{Some algebraic results}

We need the following result about commuting lifts which will be used in the proof of Claim 1 from the proof of Lemma \ref{b:lem:infinite} (corresponding to (A) from Section
\ref{sec:chatzidakis-argument}). The following construction is a variant of the commuting
actions on generalized power series used in
\cite[Theorem~7]{BHAlgClosure}; see also
\cite[Section~2.1]{BCGeometric}.
\begin{lemma}\label{a:lem:hahn}
Let $k$ be algebraically closed of characteristic zero,
and let $\rho,(\gamma_i)_{i\in I}\in\Aut(k)$ satisfy
$\rho\gamma_i=\gamma_i\rho$ for every $i$.
Let $x$ be an indeterminate, put $A:=P(k,x)$, and let
$\Omega$ be the relative algebraic closure of $k(x)$ in $A$.
For any character $\epsilon:\Q\to k^\times$ whose image
is fixed pointwise by every $\gamma_i$, the formulas
\[
s\!\left(\sum_r a_rx^r\right)
   =\sum_r\rho(a_r)\epsilon(r)x^r,
\qquad
d_i\!\left(\sum_r a_rx^r\right)
   =\sum_r\gamma_i(a_r)x^r
\]
define automorphisms of $A$ extending $\rho,\gamma_i$
and satisfying $sd_i=d_is$.
They preserve $\Omega$, which is an algebraic closure
of $k(x)$; their restrictions are denoted by the same letters.

If every $\gamma_i$ fixes $\mu_\infty(k)$ pointwise,
then $\epsilon$ may be chosen with values in $\mu_\infty(k)$
as follows:
\begin{enumerate}[label=\textup{(\roman*)}]
\item For any prescribed $\zeta\in\mu_\infty(k)$,
one can arrange $\epsilon(1)=\zeta$, giving $s(x)=\zeta x$.
\item If $\rho=\mathrm{id}_k$, then for any prime $\ell$
one can arrange $\epsilon(1)=1$ and
$\epsilon(1/\ell^m)$ of exact order $\ell^m$ for every $m\ge1$.
Then $s$ fixes $k(x)$ pointwise, and
$b_m:=x^{1/\ell^m}\in\Omega$ is fixed by every $d_i$
and has exact $s$-period $\ell^m$.
\end{enumerate}
\end{lemma}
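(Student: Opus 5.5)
We verify the claims directly on the Puiseux field $A=\bigcup_{m}k((x^{1/m}))$, then pass to $\Omega$, and finally construct $\epsilon$ for (i) and (ii).

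\textbf{Step 1: $s$ and $d_i$ are automorphisms of $A$.} Take an element $\sum_r a_r x^r$ of $k((x^{1/m}))$. Both $s$ and $d_i$ act coefficientwise and preserve the support. Hence they map $k((x^{1/m}))$ to itself, and they are compatible as $m$ grows.

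Additivity is immediate. For multiplicativity, the coefficient of $x^t$ in a product is $\sum_{r+r'=t}a_rb_{r'}$, a finite sum because supports are well ordered. Applying $\rho$ and using $\epsilon(r)\epsilon(r')=\epsilon(r+r')$ shows that $s$ respects products; the same computation without $\epsilon$ works for $d_i$. Inverses are given by the same formulas with $\rho^{-1}$ and $\epsilon^{-1}$ (resp.\ $\gamma_i^{-1}$), so both maps are bijective. They extend $\rho$ and $\gamma_i$ on constants because $\epsilon(0)=1$.

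For commutation, compute on coefficients:
\[
sd_i\colon\ a_r\mapsto\rho\gamma_i(a_r)\,\epsilon(r),\qquad
d_is\colon\ a_r\mapsto\gamma_i\bigl(\rho(a_r)\,\epsilon(r)\bigr)=\gamma_i\rho(a_r)\,\gamma_i(\epsilon(r)).
\]
These agree because $\rho\gamma_i=\gamma_i\rho$ and $\gamma_i$ fixes every $\epsilon(r)$.

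\textbf{Step 2: restriction to $\Omega$.} Both maps send $x$ to a constant multiple of $x$: $s(x)=\epsilon(1)x$ and $d_i(x)=x$. They also map $k$ onto $k$. Hence they preserve $k(x)$, and therefore preserve its relative algebraic closure $\Omega$ in $A$; the same holds for their inverses.

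Newton--Puiseux says $A$ is algebraically closed in characteristic zero. So $\Omega$ is algebraically closed and algebraic over $k(x)$, i.e.\ an algebraic closure of $k(x)$.

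\textbf{Step 3: constructing $\epsilon$.} Now assume every $\gamma_i$ fixes $\mu_\infty(k)$ pointwise; then any $\epsilon$ with values in $\mu_\infty(k)$ satisfies the hypothesis. A character $\Q\to\mu_\infty(k)$ amounts to a compatible system: choose $\eta_n=\epsilon(1/n!)$ with $\eta_{n+1}^{\,n+1}=\eta_n$. Since $\Q$ is the union of the cyclic groups $\frac{1}{n!}\Z$, this determines $\epsilon$.

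For (i), let $\eta_1=\zeta$ and choose successive roots of unity $\eta_{n+1}$ with $\eta_{n+1}^{n+1}=\eta_n$; such roots exist because $k$ is algebraically closed. This gives $\epsilon(1)=\zeta$.

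For (ii), the main point is to satisfy $\epsilon(1)=1$ while $\epsilon(1/\ell^m)$ has exact order $\ell^m$. Use $\Q=\bigoplus_p\Z[1/p]/\Z$ on top of $\Z$: the group $\Q/\Z$ decomposes into its $p$-primary parts $\Z[1/p]/\Z$. Define $\epsilon$ to factor through $\Q/\Z$, to be trivial on the $p$-primary parts for $p\ne\ell$, and on $\Z[1/\ell]/\Z$ to send $1/\ell^m$ to $\zeta_{\ell^m}$. Here $(\zeta_{\ell^m})_m$ is a compatible system of primitive $\ell^m$-th roots of unity with $\zeta_{\ell^{m+1}}^{\ell}=\zeta_{\ell^m}$. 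Because $\epsilon(1)=1$ and $\rho=\id$, the automorphism $s$ fixes $k(x)$ pointwise.

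Finally, $b_m=x^{1/\ell^m}$ lies in $\Omega$, being a root of $T^{\ell^m}-x$. It is fixed by each $d_i$, since its only coefficient is $1$. It satisfies $s^j(b_m)=\zeta_{\ell^m}^{\,j}b_m$, so its exact $s$-period is $\ell^m$.
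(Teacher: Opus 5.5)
Your proof is correct and follows the paper's argument. You verify the automorphism and commutation properties coefficientwise, use Newton--Puiseux for algebraic closedness, and use $s(x)=\epsilon(1)x$ and $d_i(x)=x$ to preserve $k(x)$ and hence $\Omega$. The only variation is that you build the characters explicitly, via compatible roots along $\tfrac{1}{n!}\Z$ and the primary decomposition of $\Q/\Z$ with trivial values off the $\ell$-part. The paper instead extends characters from $\Z$ or $\Z[1/\ell]$ using injectivity of the divisible group $\mu_\infty(k)$.

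One small slip: the inverse of $s$ is $a_r\mapsto\rho^{-1}\bigl(a_r\epsilon(r)^{-1}\bigr)$, not $\rho^{-1}(a_r)\epsilon(r)^{-1}$, because $\rho$ need not fix the values of $\epsilon$ (for instance in (i)). Bijectivity is unaffected, since each coefficient map $a\mapsto\rho(a)\epsilon(r)$ is a bijection of $k$ and supports are preserved.
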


\begin{proof}
Newton--Puiseux makes $A$ algebraically closed, hence
$\Omega$ is an algebraic closure of $k(x)$.
The character identity makes $s$ multiplicative;
its inverse replaces each coefficient $a_r$ by
$\rho^{-1}(a_r\epsilon(r)^{-1})$, while $d_i^{-1}$
acts coefficientwise by $\gamma_i^{-1}$.
The hypotheses give $sd_i=d_is$.
Since $s(x)=\epsilon(1)x$ and $d_i(x)=x$, these maps
preserve $k(x)$ and hence $\Omega$.

For the additional assertions, $\mu_\infty(k)$ is divisible
and hence injective as an abelian group, so every character
from a subgroup of $\Q$ to $\mu_\infty(k)$ extends to $\Q$.
Extending $n\mapsto\zeta^n$ from $\Z$ proves~\textup{(i)}.
For~\textup{(ii)}, choose primitive $\ell^m$-th roots $\xi_m$
with $\xi_0=1$ and $\xi_{m+1}^{\ell}=\xi_m$, and extend
the character $a/\ell^m\mapsto\xi_m^a$ from $\Z[1/\ell]$.
Then $\epsilon(1)=1$, and, since $\rho=\mathrm{id}_k$,
\[
d_i(b_m)=b_m,\qquad s^n(b_m)=\xi_m^n b_m.
\]
Thus each $b_m$ has exact period $\ell^m$.
\end{proof}
\begin{remark}\label{pposit}
Lemma~\ref{a:lem:hahn} holds in arbitrary characteristic
after replacing $A$ by the Hahn field $H:=k((x^{\Q}))$
and requiring $\ell\ne\charac(k)$ in part~\textup{(ii)}.
Indeed, $H$ is algebraically closed
\cite[Corollary~4]{Poonen}, and the same formulas preserve
supports and define automorphisms preserving the relative
algebraic closure of $k(x)$ in $H$.
The character constructions remain valid because
$\mu_\infty(k)$ is divisible and, for
$\ell\ne\charac(k)$, coherent primitive $\ell^m$-th
roots exist for all $m$.
\end{remark}
We need the following results about inertia and finite ramification which will be used in the proof of Claim 2 from Lemma \ref{b:lem:infinite}.
\begin{lemma}\label{a:lem:ramification}
Let $k$ be algebraically closed, $x$ be transcendental over $k$, $K=k(x)$, and $\Omega=K^{\alg}$.
\begin{enumerate}[label=\textup{(\roman*)}]
\item Every finite Galois extension $E/K$ inside $\Omega$ ramifies at only finitely many points of $\mathbb P^1(k)$.
\item If a finite Galois extension $E/K$ inside $\Omega$
is unramified at $a\in k$, then every inertia automorphism
in $\Gal(\Omega/K)$ above $a$ fixes $E$ pointwise.
\item If $a\neq d$ and $\ell\neq\charac(k)$ is prime, then
\[
 K(y)/K,\qquad y^\ell=\frac{x-a}{x-d},
\]
is cyclic of degree $\ell$, totally ramified at $a,d$, and unramified elsewhere.

Moreover, for a prescribed primitive $\ell$-th root $\zeta\in k$, an inertia automorphism above $a$ can be chosen to send $y$ to $\zeta y$.

\item Let $s\in\Aut(\Omega)$ satisfy
\[
s(k)=k,\qquad s(x)=\zeta x, \qquad s(\zeta)=\zeta
\]
 for some
$\zeta\in k^\times$.
If $g$ is an inertia automorphism above $a\in k$,
then, for every $j\in\Z$, the conjugate $s^jgs^{-j}$
is an inertia automorphism above $\zeta^{-j}s^j(a)$.
\end{enumerate}
\end{lemma}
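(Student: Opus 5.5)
We prove the four parts separately, using only the valuation-theoretic facts recalled above.

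For~\textup{(i)}, we write $E=K(\theta)$ with $\theta$ a primitive element. After scaling, $\theta$ is integral over $k[x]$, so its minimal polynomial $P\in k[x][T]$ is monic and separable. Then $\operatorname{disc}(P)$ is a nonzero element of $k(x)$ and has only finitely many zeros and poles in $\mathbb P^1(k)$. The point $\infty$, the finitely many zeros of $\operatorname{disc}(P)$, and the poles of the coefficients of $P$ (there are none at finite points, since $P\in k[x][T]$) form a finite set. At every finite point $a$ outside this set, $P\in\mathcal O_{v_a}[T]$ and $v_a(\operatorname{disc}P)=0$. The discriminant criterion then gives trivial inertia in the splitting field, which is $E$ because $E$ is Galois. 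So the extension is unramified at $a$.

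For~\textup{(ii)}, let $g\in I_{\widetilde v}$. By the surjection $I_{\widetilde v}\twoheadrightarrow I_w(E/K)$, the restriction $g|_E$ lies in $I_w(E/K)$. This group has order $e(w/v_a)=1$, so $g|_E=\id$.

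For~\textup{(iii)}, the element $f=(x-a)/(x-d)$ has $v_a(f)=1$, which is prime to $\ell$. Hence $f$ is not an $\ell$-th power in $K$, and Kummer theory~(i) gives a cyclic extension of degree $\ell$ (here $\mu_\ell\subseteq k$ because $k$ is algebraically closed and $\ell\ne\charac k$). Let $w$ prolong $v_a$. Then $\ell\,w(y)=w(f)=e(w/v_a)$. Since $e(w/v_a)\le\ell$ and $w(y)$ is a positive integer, we get $e=\ell$, so $a$ is totally ramified; the same argument applies at $d$ with $v_d(f)=-1$.

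At any other finite point $b$, or at $\infty$, the element $f$ is a unit. Its residue is $(b-a)/(b-d)\neq0$, and at $\infty$ the residue is $1$. Let $u$ be a local uniformizer. Then $f=c\cdot(1+O(u))$ with $c\in k^\times$, so after rescaling $y$ by $c^{-1/\ell}$ the relevant polynomial is $T^\ell-f/c$, which is monic with unit constant term. Its discriminant is $\pm\ell^\ell (f/c)^{\ell-1}$, a unit since $\ell\ne\charac k$. The discriminant criterion then shows the point is unramified.

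For the ``moreover'' part, $I_w(E/K)$ has order $e=\ell$, so it equals all of $\Gal(E/K)$. This group contains the automorphism $y\mapsto\zeta y$. Lifting it through $I_{\widetilde v}\twoheadrightarrow I_w$ gives the required inertia automorphism in $\Gal(\Omega/K)$.

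For~\textup{(iv)}, since $s$ normalizes $K$ (because $s(k)=k$ and $s(x)=\zeta x$), conjugation by $s^j$ preserves $\Gal(\Omega/K)$. We first compute how $s^j$ moves point valuations of $K$. We have $s^j(x-a)=\zeta^jx-s^j(a)=\zeta^j\bigl(x-\zeta^{-j}s^j(a)\bigr)$, using $s(\zeta)=\zeta$. Hence $v_a\circ s^{-j}=v_{b}$ with $b=\zeta^{-j}s^j(a)$. This is checked on uniformizers and units of $k[x]_{(x-a)}$: the map $s^j$ sends that local ring isomorphically onto $k[x]_{(x-b)}$.

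Now $\widetilde v\circ s^{-j}$ is a valuation on $\Omega$ prolonging $v_b$. Let $g\in I_{\widetilde v}$. Then $h=s^jgs^{-j}$ maps $s^j(\mathcal O_{\widetilde v})$ to itself, so $h$ lies in the decomposition group of $\widetilde v\circ s^{-j}$. It also acts trivially on the residue field: for $z\in s^j(\mathcal O_{\widetilde v})$, the difference $h(z)-z=s^j(g(s^{-j}z)-s^{-j}z)$ lies in $s^j(\mathfrak m_{\widetilde v})$. So $h$ is in the inertia group of a prolongation of $v_b$, which is the claim.

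The main obstacle is the unramifiedness in~\textup{(iii)} at $\infty$ and at points away from $a$ and $d$. There one must normalize $y$ so that the discriminant criterion applies with a monic integral polynomial, and the rescaling uses that $k$ is algebraically closed.
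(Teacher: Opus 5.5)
Your proposal is correct and follows the paper's proof part by part. You use an integral primitive element and its discriminant for (i), and restriction of inertia for (ii). For (iii) you use the count $\ell w(y)=e(w/v)v(f)$, the discriminant criterion, and lifting through $I_{\widetilde v}\twoheadrightarrow I_w(E/K)$. For (iv) you transport $\mathcal O_{\widetilde v}$ by $s^j$, checking the trivial residue action directly where the paper uses $\kappa(w)=k$.

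Two small points on (iii). First, the rescaling of $y$ by $c^{-1/\ell}$ is unnecessary and is not an obstacle. Away from $a,d$ the element $f$ is already a unit, so $T^\ell-f\in\mathcal O_v[T]$ is monic with unit discriminant $\pm\ell^\ell f^{\ell-1}$. Second, once you have $e=\ell$ at $a$ and $d$, uniqueness of the prolongation should be taken explicitly from the degree formula.
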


\begin{proof}
For (i), choose an integral primitive element with monic minimal polynomial $P\in k[x][Y]$. Its discriminant is a nonzero polynomial in $x$. All finite ramified points lie among its zeros; there is only one additional possible point, infinity.

For (ii), restriction sends inertia into inertia.
Since $E/K$ is unramified at $a$, its inertia groups above
$a$ are trivial. Hence every inertia automorphism in
$\Gal(\Omega/K)$ above $a$ fixes $E$ pointwise.

For (iii), put
\[
f:=\frac{x-a}{x-d},\qquad E:=K(y),\qquad y^\ell=f.
\]
Since $v_a(f)=1$, the element $f$ is not an $\ell$-th power
in $K$. The Kummer facts recalled in Section~2.1 therefore
show that $E/K$ is cyclic of degree $\ell$ and is the
splitting field of $P(T):=T^\ell-f$.

Let $v$ be either $v_a$ or $v_d$, and let $w$ be a
prolongation to $E$, normalized by $w(E^\times)=\Z$.
The valuation identity from Section~2.1 gives
\[
\ell w(y)=w(f)=e(w/v)v(f).
\]
Since $v_a(f)=1$ and $v_d(f)=-1$, we obtain
$\ell\mid e(w/v)$. As $e(w/v)\leq [E:K]=\ell$,
it follows that $e(w/v)=\ell$. The degree formula then
shows that there is a unique prolongation at each of
$a$ and $d$, so both points are totally ramified.

At every other point, including infinity, we have $v(f)=0$.
Thus $P\in\mathcal O_v[T]$ and
\[
\operatorname{disc}(P)=\pm\ell^\ell f^{\ell-1}
\]
is a unit in $\mathcal O_v$, since $\ell\ne\operatorname{char}(k)$.
The discriminant criterion from Section~2.1 shows that
$E/K$ is unramified at every such point.

Finally, the inertia group above $a$ is the whole group
$\operatorname{Gal}(E/K)$. In particular, it contains the
automorphism sending $y$ to $\zeta y$.
By the surjectivity of restriction on inertia groups recalled
in Section~2.1, this automorphism lifts to an inertia
automorphism in $\Gal(\Omega/K)$ above $a$.

For (iv), choose a prolongation $v$ of $v_a$ to $\Omega$
such that $g\in I_v$. Fix $j\in\Z$, and put
\[
a_j:=\zeta^{-j}s^j(a),\qquad w:=v\circ s^{-j}.
\]
Since $s(K)=K$ and
$s^j(x-a)=\zeta^j(x-a_j)$, we have
\[
\mathcal O_w=s^j(\mathcal O_v),\qquad
\mathcal O_w\cap K
=s^j\bigl(k[x]_{(x-a)}\bigr)
=k[x]_{(x-a_j)}.
\]
Thus $w$ prolongs $v_{a_j}$.
The conjugate $s^jgs^{-j}$ fixes $K$ and preserves
$\mathcal O_w$. Its residue action is trivial because
$\kappa(w)$ is algebraic over the algebraically closed
field $k$, hence equals $k$.
Therefore $s^jgs^{-j}\in I_w$, as required.
\end{proof}
The following lemma constructs an element fixed by a continuous
automorphism of a profinite group as an ordered product along
an orbit. Recall that a sequence in a profinite group converges
to $x$ if and only if its image in every finite continuous quotient
is eventually equal to the image of $x$.
\begin{lemma}\label{lem:ordered-product}
Let $G$ be a profinite group, and let $s:G\to G$
be a continuous automorphism.
Let $g_0\in G$ and put $g_i=s^i(g_0)$ for $i\in\Z$.
Suppose that, in every finite continuous quotient of $G$, all but
finitely many $g_i$ have trivial image. Then
\[
\eta=\lim_{m\to\infty}P_m,
\qquad P_m=g_{-m}g_{-m+1}\cdots g_m,
\]
exists in $G$ and satisfies $s(\eta)=\eta$.
In any finite continuous quotient in which all $g_i$
for $i\ne0$ are trivial, the image of $\eta$ is the
image of $g_0$.
\end{lemma}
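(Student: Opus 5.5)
The proof works one finite quotient at a time. Let $N$ be an open normal subgroup of $G$ and $\pi_N:G\to G/N$ the quotient map. By hypothesis the set $S_N:=\{i\in\Z:\pi_N(g_i)\ne1\}$ is finite, so pick $M_N$ with $S_N\subseteq[-M_N,M_N]$. For $m\ge M_N$ we have
\[
\pi_N(P_{m+1})=\pi_N(g_{-m-1})\,\pi_N(P_m)\,\pi_N(g_{m+1})=\pi_N(P_m),
\]
so $\pi_N(P_m)$ is eventually constant, equal to some $c_N\in G/N$. For $N'\subseteq N$ the constants are compatible, since $c_{N'}$ maps to $c_N$ under $G/N'\to G/N$. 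Because $G=\varprojlim G/N$, there is a unique $\eta\in G$ with $\pi_N(\eta)=c_N$ for all $N$. By the convergence criterion recalled before the lemma, $P_m\to\eta$. This shows that the limit exists.

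For invariance, note that $s$ is a continuous homomorphism with $s(g_i)=g_{i+1}$. Hence
\[
s(P_m)=g_{-m+1}\cdots g_{m+1}=g_{-m}^{-1}\,P_m\,g_{m+1}.
\]
By continuity, $s(P_m)\to s(\eta)$. Compare both sides in a fixed finite quotient $G/N$. For large $m$, both $\pi_N(g_{-m})$ and $\pi_N(g_{m+1})$ are trivial, so $\pi_N(s(P_m))=\pi_N(P_m)=c_N$. Therefore $\pi_N(s(\eta))=\pi_N(\eta)$ for every $N$. Since the open normal subgroups intersect trivially, $s(\eta)=\eta$.

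The last assertion follows from the first step with $M_N=0$. If all $g_i$ with $i\ne0$ are trivial in $G/N$, then $\pi_N(P_m)=\pi_N(g_0)$ for every $m\ge0$, hence $\pi_N(\eta)=\pi_N(g_0)$.

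No step is a serious obstacle. The one point needing care is that $s$ need not preserve a given $N$, so one cannot argue that $s$ induces an automorphism of $G/N$. The argument above avoids this by using only continuity of $s$ on $G$ together with the telescoping identity for $s(P_m)$, evaluated in a fixed quotient.
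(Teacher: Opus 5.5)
Your proposal is correct and follows essentially the paper's proof: you stabilize $\pi_N(P_m)$ in each finite quotient to define $\eta$ in the inverse limit, then use the telescoping identity $s(P_m)=g_{-m}^{-1}P_mg_{m+1}$, and settle the final assertion by direct computation. The only cosmetic difference is that you check $s(\eta)=\eta$ quotient by quotient, whereas the paper invokes $g_{\pm m}\to 1$ and continuity of multiplication; these amount to the same thing.
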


\begin{proof}
In a fixed finite quotient only finitely many factors can be
nontrivial, so the images of $P_m$ eventually stabilize.
These limits are compatible with the quotient maps and hence
define an element $\eta$ of the inverse limit $G$.
The hypothesis also gives
\[
\lim_{m\to\infty}g_m=1=\lim_{m\to\infty}g_{-m}.
\]
Since $s(g_i)=g_{i+1}$, we have
\[
\begin{aligned}
s(P_m)
&=s(g_{-m})s(g_{-m+1})\cdots s(g_m)\\
&=g_{-m+1}\cdots g_mg_{m+1}\\
&=g_{-m}^{-1}P_mg_{m+1}.
\end{aligned}
\]
Continuity and passage to the limit give $s(\eta)=\eta$.
The final assertion follows by computing the product in the
specified finite quotient.
\end{proof}

The proof of Lemma~\ref{lem:ordered-product} has a formal analogy
with Kikyo's use of average types in
\cite[Theorems~3.1 and~3.3]{Kikyo}. There, eventual truth along
an indiscernible orbit yields
a complete type invariant under the automorphism; this permits
extending the automorphism so that it fixes a realization of
that type. Here, eventual stabilization in every finite quotient
yields an element $\eta\in G$ fixed by $s$:
shifting the ordered products changes only their boundary factors,
which tend to the identity. The related proof of
\cite[Theorem~1]{KikyoShelah} instead stabilizes families of
formulas and uses compatible types to extend an automorphism;
it does not explicitly use average types. These model-theoretic
constructions feed into contradictions involving existential
closedness and compactness, while Lemma~\ref{lem:ordered-product}
supplies the commuting automorphism needed in our realization
arguments.

\section{Chatzidakis--Hrushovski argument for two commuting automorphisms}\label{sec:chatzidakis-argument}

In \cite[Section~1.11]{Survey}, Chatzidakis gives a variant of
Hrushovski's proof that the theory of fields with two commuting
automorphisms has no model companion. We recall the argument in a
form that isolates three ingredients used in our proofs:
\begin{enumerate}
  \item[(A)] realization at an infinite orbit;
  \item[(B)]  an obstruction at certain finite
orbits;
  \item[(C)] the construction, in a single model, of elements with
arbitrarily large finite orbits to which the obstruction applies.
\end{enumerate}
These are supplied by Lemmas~\ref{comm:lem:infinite},
\ref{comm:lem:period}, and~\ref{comm:lem:exist}, respectively.

For the first two lemmas, we fix a field with two commuting
automorphisms $(K,\sigma,\tau)$ of characteristic zero containing
a primitive cube root of unity $\omega$ such that
\[
  \sigma(\omega)=\omega^2,\qquad \tau(\omega)=\omega.
\]
Using $\omega$ as a parameter, we define the following formula
with free variable $z$:
\[
  \Phi_{\mathrm{comm}}(z):=
  \exists x\,\exists y\,
  \left[
    y\ne0
    \ \wedge\ y^3=x+z
    \ \wedge\ \tau(y)=\omega y
    \ \wedge\ \sigma(x)=x
    \ \wedge\ \tau(x)=x
  \right].
\]
The condition $y\ne0$ excludes the degenerate witness
$x=-z$ and $y=0$.
We use the same letters for automorphisms and their extensions, and we also fix $b\in K$.

\begin{lemma}[A: realization at an infinite orbit]
\label{comm:lem:infinite}
Suppose that $\tau(b)=b$ and that $b$ has infinite
$\sigma$-orbit. Then there is an extension
\[
  (K,\sigma,\tau)\subseteq(L,\sigma,\tau)
\]
such that $(L,\sigma,\tau)\models\Phi_{\mathrm{comm}}(b)$.
\end{lemma}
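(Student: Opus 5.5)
The plan is to take a transcendental element $x$ fixed by both automorphisms and adjoin cube roots of $x$ shifted by the whole $\sigma$-orbit of $b$. The Kummer facts (ii) from Section~\ref{sec:tools} extend $\sigma$ and $\tau$ to this field, and a twist in the $\tau$-action makes them commute. Concretely, let $L_0:=K(x)$ with $x$ transcendental over $K$. We extend $\sigma,\tau$ to $L_0$ by declaring $\sigma(x)=x=\tau(x)$; these extensions still commute, since they commute on $K$ and on $x$. For $i\in\Z$ put
\[
f_i:=x+\sigma^i(b)\in K[x].
\]
Then $\sigma(f_i)=f_{i+1}$. Also $\tau(f_i)=f_i$, because $\tau\sigma^i(b)=\sigma^i\tau(b)=\sigma^i(b)$.

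\textbf{Step 1 (independence).} Since $b$ has infinite $\sigma$-orbit, the $f_i$ are pairwise distinct monic linear polynomials. They are therefore pairwise non-associate primes of the UFD $K[x]$. Suppose $\prod f_i^{e_i}=c\,g^3$ with $c\in K^\times$ and $g\in K(x)^\times$. Comparing the multiplicities of each prime $f_i$ forces $3\mid e_i$. Hence the classes of the $f_i$ are $\mathbb F_3$-linearly independent in $L_0^\times/(L_0^\times)^3$. Note that $\omega\in K$ is a primitive cube root of unity and $\charac K=0$.

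\textbf{Step 2 (extending the automorphisms).} Choose $y_i$ with $y_i^3=f_i$ in an algebraic closure, and put $L:=L_0(y_i:i\in\Z)$. By the Kummer fact (ii), applied with $\gamma=\sigma$ and the shift permutation $\pi(i)=i+1$, the assignment $y_i\mapsto y_{i+1}$ extends $\sigma$ to an automorphism of $L$. Applied with $\gamma=\tau$ and $\pi=\id$, any assignment $y_i\mapsto\omega^{c_i}y_i$ extends $\tau$.

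The key point, and the step I expect to be the main obstacle, is commutation. For a root $y_i$ we compute
\[
\sigma\tau(y_i)=\sigma(\omega)^{c_i}y_{i+1}=\omega^{2c_i}y_{i+1},
\qquad
\tau\sigma(y_i)=\omega^{c_{i+1}}y_{i+1}.
\]
So the naive choice $c_i=1$ for all $i$ fails. We need $c_{i+1}\equiv 2c_i\pmod 3$, and we take $c_i=(-1)^i$, which satisfies this and has $c_0=1$. Then $\sigma\tau$ and $\tau\sigma$ agree on $L_0$ and on every $y_i$. Both are field automorphisms, so they agree on $L$. It remains to check that the extensions are automorphisms rather than just embeddings. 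This holds because the inverse assignments ($y_i\mapsto y_{i-1}$, and $y_i\mapsto\omega^{-c_i}y_i$) are given by the same Kummer fact.

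\textbf{Step 3 (the witness).} In $(L,\sigma,\tau)$, take $x$ as the witness for $\exists x$ and $y:=y_0$ as the witness for $\exists y$. Then $y_0\neq0$ and $y_0^3=x+b$. Moreover $\tau(y_0)=\omega^{c_0}y_0=\omega y_0$, and $\sigma(x)=x=\tau(x)$. Hence $(L,\sigma,\tau)\models\Phi_{\mathrm{comm}}(b)$, and $(K,\sigma,\tau)\subseteq(L,\sigma,\tau)$ by construction.

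The infinite orbit hypothesis is used only in Step 1: it guarantees that the shift $i\mapsto i+1$ permutes an independent family. If the orbit had finite length $n$, we would have $f_{i+n}=f_i$, and the compatibility $c_{i+1}\equiv 2c_i$ would force $2^n\equiv1\pmod 3$. That condition fails for odd $n$, which is presumably the finite-orbit obstruction (B).
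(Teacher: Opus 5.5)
Your proof is correct and is essentially the paper's: your twist $\omega^{(-1)^i}$ equals $\sigma^i(\omega)$ (since $\sigma(\omega)=\omega^2=\omega^{-1}$), which is exactly the paper's choice $\tau(\alpha_i)=\sigma^i(\omega)\alpha_i$. Your unique-factorization argument for independence is the paper's argument via the valuation at $x=-\sigma^i(b)$, phrased differently.
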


\begin{proof}
Adjoin an indeterminate $x$ fixed by both automorphisms.
For each $i\in\Z$, choose $\alpha_i$ with
$\alpha_i^3=x+\sigma^i(b)$, and put
\[
L:=K(x)(\alpha_i:i\in\Z).
\]
The classes of the elements $x+\sigma^i(b)$ are linearly
independent in the $\mathbb F_3$-vector space
$K(x)^\times/(K(x)^\times)^3$. Indeed, the valuation at $x=-\sigma^i(b)$ takes value $1$
on $x+\sigma^i(b)$ and value $0$ on every other member
of this family. Hence, Kummer theory allows us to define extensions of $\sigma$ and $\tau$ to $L$ by
\[
  \sigma(\alpha_i)=\alpha_{i+1},
  \qquad
  \tau(\alpha_i)=\sigma^i(\omega)\alpha_i.
\]
These extensions commute on $K(x)$ and on every $\alpha_i$, because
\[
  \sigma\tau(\alpha_i)
  =\sigma^{i+1}(\omega)\alpha_{i+1}
  =\tau\sigma(\alpha_i).
\]
Thus $(x,\alpha_0)$ witnesses that $(L,\sigma,\tau)\models\Phi_{\mathrm{comm}}(b)$.
\end{proof}
\begin{lemma}[B: the finite-orbit obstruction]
\label{comm:lem:period}
Suppose that $\sigma^n(b)=b$ for some integer $n\ge1$.
If there is an extension
\[
  (K,\sigma,\tau)\subseteq(L,\sigma,\tau)
\]
such that $(L,\sigma,\tau)\models\Phi_{\mathrm{comm}}(b)$,
then $n$ is even. In particular, if $b$ has odd
$\sigma$-period, the formula $\Phi_{\mathrm{comm}}(b)$
cannot hold in any such extension.
\end{lemma}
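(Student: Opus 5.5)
Take witnesses $x,y\in L$ with $y\ne0$, $y^3=x+b$, $\tau(y)=\omega y$, $\sigma(x)=x$, $\tau(x)=x$. The strategy is to apply $\sigma^n$ to the equation $y^3=x+b$ and compare the $\tau$-behaviour of $y$ and $\sigma^n(y)$. The constraints $\sigma(\omega)=\omega^2$ and $\tau(\omega)=\omega$ keep holding in $L$, since the extension preserves the automorphisms on $K$.

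First I compute $\sigma^n(y)^3=\sigma^n(x)+\sigma^n(b)=x+b=y^3$. Hence $(\sigma^n(y)/y)^3=1$, using $y\ne0$. So $\sigma^n(y)=\omega^j y$ for some $j\in\{0,1,2\}$, because the cube roots of unity in the field $L$ are exactly $1,\omega,\omega^2$.

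Next I apply $\tau$ to $\sigma^n(y)$ in two ways.
\begin{itemize}
\item Using commutativity, $\tau\sigma^n(y)=\sigma^n\tau(y)=\sigma^n(\omega y)=\sigma^n(\omega)\,\sigma^n(y)=\omega^{2^n}\sigma^n(y)$.
\item Directly, $\tau(\omega^j y)=\omega^j\cdot\omega y=\omega\,\sigma^n(y)$, since $\tau(\omega)=\omega$.
\end{itemize}
Because $\sigma^n(y)\ne0$, comparing the two gives $\omega^{2^n}=\omega$. Thus $2^n\equiv1\pmod 3$. Since $2\equiv-1\pmod3$, this says $(-1)^n=1$, so $n$ is even. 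The ``in particular'' clause follows at once: if the period is odd, no such extension can exist.

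I expect no real obstacle here. The only points needing care are that $y\ne0$, which the formula explicitly requires, and that $\mu_3(L)=\{1,\omega,\omega^2\}$, which holds because $\omega$ is a primitive cube root of unity in characteristic zero.
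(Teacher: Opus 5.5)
Your proposal is correct and follows essentially the same argument as the paper. Both write $\sigma^n(y)=\omega^j y$ and compute $\tau\sigma^n(y)=\sigma^n\tau(y)$ to get $2^n\equiv1\pmod 3$, hence $n$ even.
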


\begin{proof}
Let $x,y\in L$ satisfy the conditions in
$\Phi_{\mathrm{comm}}(b)$. Since
$\sigma^n(x+b)=x+b$ and $y^3=x+b$, there is
$j\in\{0,1,2\}$ such that
\[
  \sigma^n(y)=\omega^j y.
\]
Commutativity gives
\[
  \omega^{j+1}y
  =\tau\sigma^n(y)
  =\sigma^n\tau(y)
  =\omega^{2^n+j}y.
\]
As $y\ne0$ and $\omega$ has order $3$, we obtain
$2^n\equiv1\pmod3$, so $n$ is even.
\end{proof}
Put
\[
J:=\{n\in\N\mid n\ge3\ \ \text{and $n$ is odd}\}.
\]

\begin{lemma}[C: existence of finite orbits]
\label{comm:lem:exist}
There is a characteristic-zero difference field
$(K_0,\sigma,\tau)$ containing a primitive cube
root of unity $\omega$ such that
\[
\sigma(\omega)=\omega^2,\qquad \tau=\id_{K_0},
\]
and, for every $n\in J$, there is $b_n\in K_0$ having exact
$\sigma$-period $n$.
\end{lemma}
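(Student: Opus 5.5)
We need a characteristic-zero field $K_0$ with an automorphism $\sigma$ such that $\sigma(\omega)=\omega^2$, taking $\tau=\id$, and for every odd $n\ge3$ an element $b_n$ of exact $\sigma$-period $n$. The plan is to build $K_0$ as a field of rational functions over $\Q(\omega)$ on which $\sigma$ acts by complex conjugation on the constants and by permuting the variables. Once $\sigma$ is defined, $\tau=\id_{K_0}$ trivially commutes with it, so the only content is the construction of $\sigma$.

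First, let $k=\Q(\omega)$ and let $c$ be the nontrivial automorphism of $k$, so that $c(\omega)=\omega^2$. For each $n\in J$ take indeterminates $t_{n,0},\dots,t_{n,n-1}$, all algebraically independent over $k$, and put
\[
K_0:=k\bigl(t_{n,i}: n\in J,\ 0\le i<n\bigr).
\]
Define $\sigma$ on $K_0$ by $\sigma|_k=c$ and $\sigma(t_{n,i})=t_{n,i+1 \bmod n}$. This is a well-defined field automorphism. Indeed, $\sigma$ is the composite of the coefficientwise action of $c$ on the polynomial ring $k[t_{n,i}]$ with a permutation of the variables, which is a ring automorphism; it then extends uniquely to the fraction field. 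Its inverse is obtained in the same way from $c^{-1}$ and the inverse permutation.

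Next, put $b_n:=t_{n,0}$. Then $\sigma^j(b_n)=t_{n,j\bmod n}$. These elements are distinct for $0\le j<n$ and $\sigma^n(b_n)=b_n$, so $b_n$ has exact $\sigma$-period $n$. The field has characteristic zero and contains $\omega$ with $\sigma(\omega)=\omega^2$. Setting $\tau=\id_{K_0}$ makes $(K_0,\sigma,\tau)$ a difference field with two commuting automorphisms, as required.

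There is no real obstacle here. The only point needing a moment's care is that $\sigma$ has order $2$ on $k$ while the periods $n$ are odd. This causes no conflict, because the orbit of $b_n$ involves only the variables, which are fixed by $c$. Concretely, $\sigma^n$ acts on $k$ as $c^n=c\ne\id$, but it fixes $t_{n,0}$, and the exact period of an element concerns only that element.
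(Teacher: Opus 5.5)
Your proposal is correct and is essentially the paper's proof: both take $K_0=\Q(\omega)(b_{n,j}\mid n\in J,\ 0\le j<n)$ with algebraically independent generators, extend $\omega\mapsto\omega^2$ by cyclically permuting each block $b_{n,0},\dots,b_{n,n-1}$, put $\tau=\id_{K_0}$, and take $b_n=b_{n,0}$. Your justification that $\sigma$ is an automorphism (coefficientwise conjugation composed with a permutation of variables, then extended to the fraction field) is the same reason the paper gives.
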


\begin{proof}
Choose a primitive cube root of unity $\omega$ and a family
of algebraically independent elements
\[
\{b_{n,j}\mid n\in J,\ 0\le j<n\}
\]
over $\Q$. Let us set
\[
K_0:=\Q(\omega)(b_{n,j}\mid n\in J,\ 0\le j<n)
\]
 and let $\tau=\id_{K_0}$. Extend the automorphism
$\omega\mapsto\omega^2$ of $\Q(\omega)$ to $K_0$ by setting
\[
\sigma(b_{n,j})=
\begin{cases}
b_{n,j+1},&0\le j<n-1,\\
b_{n,0},&j=n-1.
\end{cases}
\]
These assignments define an automorphism because they
permute an algebraically independent generating family.
Since $\tau$ is the identity, $\sigma$ and $\tau$ commute.
For each $n\in J$, put $b_n:=b_{n,0}$. The elements
$b_{n,0},\ldots,b_{n,n-1}$ are distinct, so $b_n$ has exact
$\sigma$-period $n$. Moreover, $\tau(b_n)=b_n$ for every
$n\in J$, as required.
\end{proof}

\begin{proof}[Proof of non-existence of a model companion.]
Let $S$ be the theory of fields with two commuting automorphisms,
and suppose that it has a model companion $S^*$.
The theory $S$ is inductive, so the models of $S^*$ are precisely
 existentially closed models of $S$.

Take $(K_0,\sigma,\tau)$ given by Lemma \ref{comm:lem:exist} and embed it into
$(M,\sigma,\tau)\models S^*$.
Lemma~\ref{comm:lem:period} gives
\[
  M\models\neg\Phi_{\mathrm{comm}}(b_n)
  \qquad(n\in J).
\]
Let $\mathcal U$ be a nonprincipal ultrafilter on $J$, and let us define the corresponding ultraproduct and its element
\[
  (N,\sigma,\tau):=(M,\sigma,\tau)^J/\mathcal U,
  \qquad b:=[b_n]_{\mathcal U}\in N.
\]
By \L o\'s's theorem,
\[
  N\models S^*,\qquad
  \tau(b)=b,\qquad
  N\models\neg\Phi_{\mathrm{comm}}(b).
\]
For each $m>0$ and each $n\in J$ with $n>m$, we have
$\sigma^m(b_n)\ne b_n$.
Hence, by \L o\'s's theorem again,
 $b$ has infinite $\sigma$-orbit.
Lemma~\ref{comm:lem:infinite} supplies an extension of $(N,\sigma,\tau)$
 satisfying $\Phi_{\mathrm{comm}}(b)$, which
contradicts the existential closedness of $N$.
\end{proof}

\begin{remark}
Our two main arguments follow this scheme (as suggested in \cite[Remark~1.12]{Survey}), but they additionally require the following:
\begin{equation}
\tag{D}
\text{\emph{The realizing extension should preserve the prescribed theory of the reduct.}}
\end{equation}
In both settings, constructing the realizing extension within
the prescribed class is the additional issue highlighted in \cite[Remark~1.12]{Survey}. Using the notation of Sections~\ref{sec:bounded}
and~\ref{sec:acfa}, the ingredients are as follows.

\begin{itemize}
\item For bounded PAC fields, Lemma~\ref{b:lem:infinite}
supplies (A), subject to its hypotheses on the fixed elementary
base. Lemma~\ref{b:lem:period} supplies (B), excluding
$\Phi_{\mathrm{PAC}}(b)$ whenever $\sigma^n(b)=b$ and
$\gcd(n,q)=1$. Lemma~\ref{b:lem:periods} supplies (C):
it constructs a single model containing the fixed elementary
base and elements of exact $\sigma$-period $\ell^m$ for every
$m\ge1$.  Requirement (D) is ensured by Lemma~\ref{b:lem:completion},
which produces an elementary PAC extension while preserving
the prescribed automorphism action.

\item For a fixed completion $T$ of $\ACFA$,
Proposition~\ref{a:prop:realization} supplies (A) under its
orbit and fixed-point hypotheses, and
Lemma~\ref{a:lem:obstruction} supplies (B) for periods
$n\equiv1\pmod r$.
Lemma~\ref{a:lem:obstruction_exists} supplies (C) by constructing
$\tau$-fixed elements of these periods whose $\sigma$-cycles
are algebraically independent over $A_0$.
This independence ensures the required orbit conditions
in the ultrapower.
Finally, Lemma~\ref{a:lem:acfa} supplies (D), preserving
the completion $T$ of the first difference-field reduct.
\end{itemize}
\end{remark}

\section{Automorphisms of bounded PAC fields}\label{sec:bounded}

In this section we prove Theorem~\ref{b:thm:main}, following
the scheme of Section~\ref{sec:chatzidakis-argument}.
After naming suitable constants to obtain a model-complete
field theory, we use a single Kummer coset to establish an
obstruction at certain finite periods. The main construction
realizes the corresponding existential condition at an
infinite orbit: commuting lifts and a bilateral product of
inertia elements produce the required automorphism action,
and a PAC completion preserves the prescribed elementary
field theory, supplying property~(D).
We also construct elements of arbitrarily large finite
periods to which the obstruction applies.
An ultrapower then turns these finite orbits into an
infinite orbit, yielding the contradiction to the existence
of a model companion.

Let us fix a field $F$ and a prime $\ell$ such that
\begin{equation}\label{b:eq:hyp}
F\text{ is bounded and PAC},\qquad
\operatorname{char}(F)=0,\qquad \mu_\infty\subseteq F,
\qquad \Gal(F)\twoheadrightarrow C_\ell.
\end{equation}
Put $T=\Th_{L_{\mathrm{ring}}}(F)$, where
$L_{\mathrm{ring}}=\{0,1,+,-,\cdot\}$. Since $\mu_\ell\subseteq F$,
Kummer theory makes the last hypothesis equivalent to the existence of
an element
\begin{equation}\label{b:eq:c}
c\in F^\times\setminus(F^\times)^\ell.
\end{equation}
For a tuple $\bd$ from $F$, let $T_{\bd}=\Th(F,\bd)$ in the language
with names for its entries, and write
\[
S_{\bd}=(T_{\bd})_\sigma
=T_{\bd}\cup\{\sigma\text{ is a field automorphism and}\ \sigma(\bd)=\bd\}.
\]
We use the following three standard facts about PAC fields:
\begin{itemize}
  \item absolute Galois groups of PAC
fields are projective;
  \item algebraic extensions of PAC fields are PAC;
  \item a regular inclusion $K\subseteq L$ of perfect PAC fields is elementary if the
restriction map $\Gal(L)\to\Gal(K)$ is an isomorphism.
\end{itemize}
References are \cite[Corollary 11.2.5, Theorem 11.6.2, Corollary 20.3.4]{FJ}; see also \cite[Section 4.3]{CP} and the proof of \cite[Theorem 3.25]{HK}.

The base-change assertion in the following lemma is
\cite[Section~1.4, Fact~3]{BCGeometric}.
We include its proof and record the resulting
commuting lift of an automorphism.
\begin{lemma}\label{b:lem:basechange}
If $M\preccurlyeq K$ are models of the theory of a bounded
characteristic-zero field, then, with compatible algebraic closures,
\[
K^{\alg}=KM^{\alg},\qquad
\Gal(K)\xrightarrow{\sim}\Gal(M).
\]
In particular, if $\sigma\in\Aut(K/M)$, there is
$\rho\in\Aut(K^{\alg})$ extending $\sigma$, fixing $M^{\alg}$
pointwise, and commuting with every element of $\Gal(K)$.
\end{lemma}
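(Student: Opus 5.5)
The plan has three steps: show that $K$ and $M^{\alg}$ are linearly disjoint over $M$, use boundedness to see that they generate $K^{\alg}$, and then define $\rho$ by acting on each factor separately.

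\emph{Step 1: linear disjointness.} Since $M\preccurlyeq K$ and $M^{\alg}$ is a union of finite extensions $M(\alpha)$, it suffices to see that each $M(\alpha)$ is linearly disjoint from $K$ over $M$. Let $P\in M[T]$ be the minimal polynomial of $\alpha$. Irreducibility of $P$ over $M$ is a first-order statement with parameters in $M$, so by elementarity $P$ stays irreducible over $K$. Hence $[K(\alpha):K]=[M(\alpha):M]$, which gives the disjointness. In characteristic zero the same conclusion also follows from $M$ being relatively algebraically closed in $K$: an element of $K$ algebraic over $M$ has a minimal polynomial with a root in $K$, hence in $M$ by elementarity.

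\emph{Step 2: $K^{\alg}=KM^{\alg}$.} This is where boundedness enters, and I expect it to be the main point. Fix $n$. Because $M$ is bounded, the statement ``$M$ has exactly $N_n$ separable extensions of degree $n$'' is expressible by a first-order sentence. To write it, quantify over coefficient tuples of degree-$n$ polynomials and identify two of them when their root fields coincide, which is definable via the splitting of one polynomial over the root field of the other, interpreted in the field. Hence $K$ also has exactly $N_n$ extensions of degree $n$. By Step 1, the $N_n$ degree-$n$ extensions $M'$ of $M$ yield extensions $KM'$ of $K$ of degree $n$. These are pairwise distinct: if $KM'=KM''$, then $M''\subseteq KM'$, and linear disjointness of $K$ and $M^{\alg}$ forces $M''\subseteq M'$, and symmetrically $M'\subseteq M''$. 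So they exhaust all degree-$n$ extensions of $K$. As $n$ varies, every finite extension of $K$ lies in $KM^{\alg}$, which is therefore algebraically closed (characteristic zero, so separability is automatic).

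\emph{Step 3: the isomorphism and the lift.} Restriction $\Gal(K)\to\Gal(M)$ is injective because $K^{\alg}=KM^{\alg}$. It is surjective by linear disjointness, via the Galois theory of the compositum $KM^{\alg}/K$. For $\sigma\in\Aut(K/M)$, define $\rho$ on $K\otimes_M M^{\alg}\cong KM^{\alg}=K^{\alg}$ by $\sigma\otimes\id$. This is well defined and a field automorphism by linear disjointness. It extends $\sigma$ and fixes $M^{\alg}$. Any $g\in\Gal(K)$ acts as $\id\otimes g|_{M^{\alg}}$, which commutes with $\sigma\otimes\id$ on generators. Hence $\rho g=g\rho$.
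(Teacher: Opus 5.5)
Your proposal is correct and follows essentially the same route as the paper. It gets regularity (linear disjointness of $K$ and $M^{\alg}$) from elementarity, uses boundedness to transfer the finitely many degree-$n$ extensions to $K$ and obtain $K^{\alg}=KM^{\alg}$, and then defines $\rho=\sigma\otimes\mathrm{id}$ on $K\otimes_M M^{\alg}$. The only differences are cosmetic: you count all degree-$n$ extensions and use injectivity of $M'\mapsto KM'$, whereas the paper transfers an exhaustive list of Galois extensions. Make sure your first-order sentence counts subfields of the algebraic closure; if it counts isomorphism classes instead, compare the $K$-isomorphism classes of the fields $KM'$ by conjugating with elements of $\Gal(K)$.
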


\begin{proof}
Elementarity makes $K/M$ regular. Fix a positive integer $n$. There
are finitely many Galois extensions of $M$ of degree at most $n$;
choose primitive-element polynomials for all of them. The assertion
that these polynomials exhaust the Galois extensions of degree at
most $n$, up to isomorphism, is first-order in their coefficients.
One can express it using the field algebras associated to irreducible
polynomials and the existence of a root of one such polynomial in
another associated algebra.

By elementarity the same list exhausts the corresponding extensions
of $K$. Regularity preserves the degrees and the Galois groups of
the listed extensions. Every finite Galois extension of $K$ therefore
lies in $KM^{\alg}$, giving $K^{\alg}=KM^{\alg}$; regularity gives that the restriction map $\Gal(K)\to \Gal(M)$ is an isomorphism.

The fields $K$ and $M^{\alg}$ are linearly disjoint over $M$. Hence
$\sigma$ on $K$ and the identity on $M^{\alg}$ define $\rho$ on their
compositum, which commutes with every element of $\Gal(K)$.
\end{proof}

\begin{proposition}\label{b:prop:constants}
There is a countable tuple $\bd$ from $F$ for which $T_{\bd}$ is
model-complete. Both $T_{\bd}$ and $S_{\bd}$ are then inductive.
\end{proposition}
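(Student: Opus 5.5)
We will take $\bd$ to be a tuple from $F$ that names enough algebraic data for the elementary invariants of bounded PAC fields to become rigid. Since $F$ is bounded, for each $n$ there are only finitely many Galois extensions of $F$ of degree at most $n$ inside $F^{\alg}$. Choose, for each $n$, monic irreducible primitive-element polynomials over $F$ for all of them. Let $\bd$ list the coefficients of all these polynomials, over all $n$; this is a countable tuple. We will also include in $\bd$ an enumeration of $\Q^{\alg}\cap F$ (countable), although containing $\mu_\infty$ already determines much of this.

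To prove that $T_{\bd}$ is model-complete, let $M\subseteq K$ be models of $T_{\bd}$. We will show the inclusion is elementary using the third PAC fact: a regular inclusion of perfect PAC fields is elementary when restriction $\Gal(K)\to\Gal(M)$ is an isomorphism. Both fields are perfect (characteristic zero) and PAC, since PAC is elementary. The plan is as follows.

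First, we show that every finite Galois extension of $M$ and of $K$ is generated by a root of one of the named polynomials. For each $n$, the sentence of $T_{\bd}$ stating ``these polynomials are irreducible, each has Galois splitting behaviour as in $F$, and they exhaust the Galois extensions of degree $\le n$'' is first-order in the constants, exactly as in the proof of Lemma~\ref{b:lem:basechange}. Consequently the named polynomials remain irreducible over $K$. A named polynomial $P$ gives the same extension over $M$ and over $K$ because its coefficients lie in the common prime model. Hence $KM^{\alg}=K^{\alg}$, and restriction $\Gal(K)\to\Gal(M)$ is surjective (irreducibility is preserved) and injective (every Galois extension of $K$ comes from $M$).

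Second, we check regularity of $K/M$. Since $\Gal(K)\to\Gal(M)$ is injective, $M^{\alg}\cap K$ is fixed by $\Gal(K)$, which maps onto $\Gal(M)$. Therefore $M^{\alg}\cap K$ is fixed by $\Gal(M)$, so it equals $M$. In characteristic zero this gives regularity.

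Third, the cited fact (\cite[Corollary 20.3.4]{FJ}) then yields $M\preccurlyeq K$.

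The main subtlety is the first step, namely that the exhaustion and irreducibility statements are genuinely expressible with the named coefficients, uniformly for each $n$. This is handled as in Lemma~\ref{b:lem:basechange}: for each of the finitely many named polynomials, the algebra $M[T]/(P)$ is uniformly interpretable. The statement ``every irreducible polynomial of degree $\le n$ acquires a root in one of these algebras'' is a single first-order sentence, since the degree is bounded.

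Inductivity is then immediate. A model-complete theory is $\forall\exists$-axiomatizable (Robinson), hence inductive. $S_{\bd}$ adds only the universal axioms saying that $\sigma$ is a field automorphism and fixes each $d_i$; surjectivity of $\sigma$ is a $\forall\exists$ axiom. So $S_{\bd}$ is also $\forall\exists$, and hence inductive.
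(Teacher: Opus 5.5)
Your proof is correct, but it takes a different route from the paper. The paper does not prove model-completeness; it cites the model-completeness theorem for bounded PAC fields (Chatzidakis--Pillay, Section~4.6, and Hoffmann--Kowalski) and then notes that inductivity is clear.

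You instead reprove the cited theorem from the third PAC fact listed in the paper, namely elementarity of regular inclusions of perfect PAC fields when restriction of absolute Galois groups is an isomorphism. You take $\bd$ to be the coefficients of primitive-element polynomials for all finite Galois extensions of $F$. You then run the definability-of-exhaustion argument from the proof of Lemma~\ref{b:lem:basechange} for an arbitrary inclusion $M\subseteq K$ of models of $T_{\bd}$, rather than an elementary one, so there is no circularity. Naming these coefficients yields $K^{\alg}=KM^{\alg}$, hence injectivity of $\Gal(K)\to\Gal(M)$. Irreducibility of the named polynomials over $K$ yields surjectivity and $M^{\alg}\cap K=M$.

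Your route buys a self-contained argument with an explicit $\bd$. It also shows exactly where boundedness enters: finite lists in each degree make exhaustion a single sentence, and they make $\bd$ countable. Finally, it makes plain that any larger tuple still works, such as the paper's later enlargement by $c$ and $\mu_\infty$. The paper's citation is shorter and defers these details to the literature. Your inductivity argument is the same as the paper's.

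A few slips should be fixed.
\begin{enumerate}
\item The sentence ``every irreducible polynomial of degree $\le n$ acquires a root in one of these algebras'' is false if ``these algebras'' are only the Galois extensions of degree $\le n$. It fails as soon as $F$ has a non-normal extension of degree at most $n$, since such an extension embeds only into its Galois closure, whose degree can be as large as $n!$. Either let the list range over Galois extensions of degree $\le n!$, or quantify only over those $Q$ for which $F[T]/(Q)$ is Galois. The second option matches your earlier phrase ``exhaust the Galois extensions of degree $\le n$''.
\item In the regularity step, it is surjectivity of $\Gal(K)\to\Gal(M)$, not injectivity, that forces $M^{\alg}\cap K=M$.
\item The coefficients are common to $M$ and $K$ because they interpret the named constants; no ``prime model'' is involved.
\item For surjectivity, say explicitly why it holds. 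Normality of $M(\pi_j)/M$ together with irreducibility of $P_j$ over $K$ gives $[K(\pi_j):K]=[M(\pi_j):M]$. This makes restriction bijective at each finite level, and compactness then gives surjectivity of $\Gal(K)\to\Gal(M)$.
\item Note that $M\preccurlyeq K$ in the ring language, together with $\bd\subseteq M$, gives $(M,\bd)\preccurlyeq(K,\bd)$.
\end{enumerate}
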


\begin{proof}

By the model-completeness result for bounded PAC fields
\cite[Section~4.6]{CP}, there is a countable tuple $\bd$ from $F$
such that $T_{\bd}$ is model-complete; see also the paragraph
immediately preceding \cite[Theorem~4.10]{HK}.
It is clear that both $T_{\bd}$ and $S_{\bd}$ are inductive.
\end{proof}

From now on choose $\bd$ as in
Proposition~\ref{b:prop:constants}, enlarged to contain $c$ from
\eqref{b:eq:c} and every element of the countable group
$\mu_\infty$. Naming these additional constants preserves
model completeness, so $T_{\bd}$ remains model-complete and
both $T_{\bd}$ and $S_{\bd}$ are inductive.
Fix $(M,\bd)\models T_{\bd}$.
The constructions will start with $M\preccurlyeq K$ and
$\sigma|_M=\mathrm{id}$. Pointwise fixation of this whole base is a
choice of models used in the proof, not an additional axiom of $S_{\bd}$.

We choose a prime $q\ne\ell$, a primitive $q$-th root of unity
$\zeta\in M$ and consider the following existential formula
\[
\Phi_{\mathrm{PAC}}(z):=\exists x\,\exists y\,\bigl[
\sigma(x)=\zeta x
\ \land\ (x-z)(\zeta x-z)\ne0
\land\ x-z=c(\zeta x-z)y^\ell
\bigr].
\]
\begin{remark}\label{meaning}
Let us note here the meaning of $\Phi_{\mathrm{PAC}}$. For $(K,\bd,\sigma)\models S_{\bd}$ and $b\in K$, we have that
\[
(K,\bd,\sigma)\models \Phi_{\mathrm{PAC}}(b)
\]
if and only if there is $x\in K$ such that the following hold:
\begin{itemize}
  \item $(x-b)\neq 0\neq (\zeta x-b)$;

  \item $\sigma(x)=\zeta x$;

  \item $(x-b)/(\zeta x-b)\in c(K^\times)^\ell$: the fixed non-trivial coset of $(K^\times)^\ell$.
\end{itemize}
\end{remark}
For a related use of multiplicative power cosets to constrain
automorphisms of pseudofinite fields, see
\cite[Section~4]{BHAlgClosure}.

The result below corresponds to (B) from Section \ref{sec:chatzidakis-argument}.
\begin{lemma}\label{b:lem:period}
Let $(K,\bd,\sigma)\models S_{\bd}$, $b\in K$,
and $n\ge1$. If $\sigma^n(b)=b$
and $\gcd(n,q)=1$, then
$(K,\bd,\sigma)\models\neg\Phi_{\mathrm{PAC}}(b)$.
\end{lemma}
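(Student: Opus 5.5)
The plan is a telescoping argument along the $\sigma^n$-orbit, set inside the fixed nontrivial Kummer coset $c(K^\times)^\ell$ (Remark~\ref{meaning}). Assume, for a contradiction, that $(K,\bd,\sigma)\models\Phi_{\mathrm{PAC}}(b)$. Choose $x\in K$ as in Remark~\ref{meaning}, and put
\[
u:=\frac{x-b}{\zeta x-b}\in c(K^\times)^\ell .
\]
The entries of $\bd$ include $c$ and all of $\mu_\infty$, and $\sigma(\bd)=\bd$. Hence $\sigma$ fixes $c$ and $\zeta$ pointwise, and $\sigma$ maps $(K^\times)^\ell$ onto itself. So every iterate $\sigma^m(u)$ again lies in $c(K^\times)^\ell$.

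Next I compute $\sigma^{nj}(u)$ for $j=0,\dots,q-1$. Since $\sigma(x)=\zeta x$ and $\sigma(\zeta)=\zeta$, we get $\sigma^{nj}(x)=\zeta^{nj}x$. Since $\sigma^n(b)=b$, we get $\sigma^{nj}(b)=b$. Therefore
\[
\sigma^{nj}(u)=\frac{\zeta^{nj}x-b}{\zeta^{nj+1}x-b}.
\]
The numerator and denominator are nonzero, because they are images under the automorphism $\sigma^{nj}$ of the nonzero elements $x-b$ and $\zeta x-b$.

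The key step uses $\gcd(n,q)=1$. Because of it, the exponents $nj$ for $j=0,\dots,q-1$ run over all residues modulo $q$. Consider the product
\[
\prod_{j=0}^{q-1}\sigma^{nj}(u)=\prod_{m\in\Z/q\Z}\frac{\zeta^{m}x-b}{\zeta^{m+1}x-b}.
\]
Reindexing the product by $m$, each factor $\zeta^m x-b$ appears once in a numerator and once in a denominator. So the product telescopes cyclically to $1$. On the other hand, each of the $q$ factors lies in $c(K^\times)^\ell$, so the product lies in $c^q(K^\times)^\ell$. Hence $c^q\in(K^\times)^\ell$.

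Since $q\neq\ell$ are primes, choose integers $s,t$ with $sq+t\ell=1$. Then
\[
c=(c^q)^s(c^t)^\ell\in(K^\times)^\ell .
\]
But $K\models T_{\bd}$, and $T_{\bd}$ contains the sentence saying that the named constant $c$ is not an $\ell$-th power, which holds in $F$ by \eqref{b:eq:c}. This is a contradiction, so $(K,\bd,\sigma)\models\neg\Phi_{\mathrm{PAC}}(b)$.

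I expect no serious obstacle. The only points needing care are the cyclic reindexing, which uses the hypothesis $\gcd(n,q)=1$, and the fact that $\sigma$ fixes $c$ and $\zeta$, which comes from naming them in $\bd$.
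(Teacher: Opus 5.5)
Your proof is correct and follows the paper's argument essentially step for step. As in the paper, the product $\prod_{j=0}^{q-1}\sigma^{jn}$ applied to $(x-b)/(\zeta x-b)$ telescopes to $1$ because $j\mapsto jn$ permutes $\Z/q\Z$, while the same product lies in $c^q(K^\times)^\ell$; your explicit B\'ezout computation simply spells out the paper's appeal to $\gcd(q,\ell)=1$ to conclude $c\in(K^\times)^\ell$.
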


\begin{proof}Suppose, towards a contradiction, that
\[
(K,\bd,\sigma)\models\Phi_{\mathrm{PAC}}(b),
\]
and let $x,y\in K$ be corresponding witnesses. Let us define
\[
f:=(x-b)/(\zeta x-b)=cy^\ell.
\]
Since we have
\[
\sigma^n(b)=b,\qquad \sigma^n(\zeta)=\zeta,\qquad \sigma^n(c)=c,\qquad \sigma(x)=\zeta x,
\]
we obtain
\[
\prod_{j=0}^{q-1}\sigma^{jn}(f)
=\prod_{j=0}^{q-1}
  \frac{\zeta^{jn}x-b}{\zeta^{jn+1}x-b}
=1.
\]
The last equality holds because multiplication by $n$ permutes the
residue classes modulo $q$, hence we have
\[
\{\zeta^{jn}\mid j=0,1,\ldots,q-1\}=\{\zeta^{jn+1}\mid j=0,1,\ldots,q-1\}.
\]
On the other hand, we obtain
\[
1=\prod_{j=0}^{q-1}\sigma^{jn}(f)
=\prod_{j=0}^{q-1}c\sigma^{jn}\left(y^\ell\right)
=c^q\left(\prod_{j=0}^{q-1}\sigma^{jn}(y)\right)^\ell.
\]
Therefore, $c^q\in (K^\times)^\ell$. Since $\gcd(q,\ell)=1$, we get $c\in (K^\times)^\ell$, which is a contradiction.
\end{proof}

The following construction is the main additional ingredient. It originates from the idea to regard a pseudofinite field as the field of fixed points of a model of ACFA and then to try to use Hrushovski's proof for two commuting automorphisms in the case of pseudofinite fields.
\begin{lemma}\label{b:lem:free}
Let $I$ be nonempty and finite or countable. The theory of fields with
an $I$-indexed family of automorphisms has a model companion $\operatorname{ACFA}_I$.
Its underlying fields are algebraically closed. An algebraically closed
invariant subfield is an amalgamation base, and its automorphisms
commuting with all the distinguished automorphisms are partial
elementary maps of models of $\operatorname{ACFA}_I$.

If $(C,(\tau_i)_{i\in I})\models \operatorname{ACFA}_I$ is sufficiently saturated,
then $N=\bigcap_{i\in I}\Fix_C(\tau_i)$ is PAC.
\end{lemma}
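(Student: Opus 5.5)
The plan has three parts: existence of $\ACFA_I$, the amalgamation and elementarity assertions, and the PAC property of the fixed field.

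\textbf{Existence.} The case $|I|=1$ is $\ACFA$, and in general the plan is to reduce to the known theory of fields with free, non-commuting automorphisms (Hrushovski; Kikyo--Pillay; Sjögren). Fields with $I$-indexed automorphisms are exactly difference fields for the free group $F_I$. For $I$ finite or countable the standard geometric axioms work. For every irreducible variety $V$ and every irreducible subvariety
\[
W\subseteq V\times\prod_{i\in I_0}V^{\tau_i},
\qquad I_0\subseteq I\ \text{finite},
\]
projecting dominantly onto each factor, we require a point $(a,(\tau_i(a))_{i})\in W$, and in fact such points Zariski dense in $W$. These are first-order by definability of irreducibility and dominance in families. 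One shows that existentially closed models satisfy them, by realizing a generic point of $W$ in an extension: each $\tau_i$ is extended separately via $k(a)\cong k(\tau_i(a))$, which needs no compatibility between the $\tau_i$ because the group is free. Conversely, models of the axioms are existentially closed, by the usual argument for $\ACFA$ applied coordinatewise. Algebraic closedness of the underlying field follows exactly as for $\ACFA$.

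\textbf{Amalgamation and elementary maps.} Let $E$ be an algebraically closed invariant subfield, with two extensions $A,B$. Take $A,B$ linearly disjoint over $E$. Since $E$ is algebraically closed, $A\otimes_E B$ is a domain. Each $\tau_i$ acts on it as $\tau_i\otimes\tau_i$, and this extends to the fraction field. This gives amalgamation over algebraically closed sets. Standard model theory of model companions then gives that the type of an algebraically closed substructure is determined by its quantifier-free type. Hence an automorphism of $E$ commuting with all $\tau_i$ is an isomorphism of substructures, and so it is a partial elementary map in the monster model.

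\textbf{PAC property of $N$.} This is the main obstacle. Take an absolutely irreducible variety $V$ over $N$; we need an $N$-rational point. Because $V$ is defined over $N$, each $\tau_i$ fixes $V$, so $V^{\tau_i}=V$. Consider the diagonal
\[
W=\{(v,v,\dots,v)\}\subseteq V\times\prod_{i\in I_0}V,
\]
taking $I_0$ to be all of $I$ when $I$ is finite. This $W$ projects dominantly onto each factor, so the axioms give points $a$ with $\tau_i(a)=a$ for all $i\in I_0$, and therefore $a\in V(N)$.

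For countable $I$, finitely many coordinates at a time is not enough, since we need $a$ fixed by every $\tau_i$ simultaneously. Here we use saturation. The set of conditions
\[
\{\,x\in V\,\}\cup\{\,\tau_i(x)=x : i\in I\,\}
\]
is a countable partial type over the parameters of $V$. It is finitely satisfiable by the finite case. Hence it is realized in a sufficiently ($\aleph_1$-)saturated $C$, giving a point of $V(N)$.

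A subtle point: $V$ is only defined over $N$, and absolute irreducibility is needed so that $V$ is an irreducible variety over $C$. This holds because $N$ is perfect in characteristic zero, and in general it follows since absolute irreducibility is preserved under base change.
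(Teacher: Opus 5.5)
Your proposal is correct, but it is more self-contained than the paper's proof, which obtains the substantive facts from the literature.

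\textbf{The paper's route.} Existence for finite $I$ is cited (Moosa--Scanlon in characteristic zero, BHKK in positive characteristic). For countable $I$, $\operatorname{ACFA}_I$ is built as the union of the finite-reduct companions. This needs an extra observation: deleting generators preserves existential closedness, so the finite companions form a chain. The PAC property of each finite-reduct fixed field $N_J$ is quoted from \cite[Proposition~2.4]{BKTorsion}. Only the final saturation step is argued directly.

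\textbf{Your route.} You axiomatize $\operatorname{ACFA}_I$ by geometric axioms indexed by all finite $I_0\subseteq I$ at once. The countable case then costs nothing extra, and compatibility of the reducts is automatic. You derive the PAC property directly from the diagonal instance $W=\Delta_V\subseteq V\times\prod_{i\in I_0}V$ of the axiom, using $V^{\tau_i}=V$ for $V$ defined over $N$. This is essentially the argument behind the cited proposition. The amalgamation and partial-elementarity part, and the final saturation step on the type $\{x\in V\}\cup\{\tau_i(x)=x: i\in I\}$, coincide with the paper's.

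\textbf{What it costs.} The real work moves into showing that your axioms are first-order and characterize the existentially closed models, and you only sketch this. Two points should be made explicit:
\begin{itemize}
\item The direction ``axioms $\Rightarrow$ e.c.'' needs the usual trick. Replace a witness $c$ by the tuple $(w(c))_{w\in B}$ over a finite ball $B$ of group words in the $\tau_i$, $i\in I_0$. Then note that the relations $\tau_i(c_w)=c_{\tau_i w}$ (whenever $\tau_iw\in B$) hold on the locus $W$. Hence points $(a,(\tau_i(a))_i)\in W$ automatically have the form $a=(w(a_e))_{w\in B}$.
\item ACF should be included among the axioms, so that irreducibility over the base is absolute.
\end{itemize}

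Your closing remark about perfectness is unnecessary. Absolute irreducibility means irreducibility over an algebraic closure, hence over any algebraically closed $C\supseteq N$, in every characteristic.
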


\begin{proof}
For finite $I$, existence of the model companion in characteristic
zero follows from \cite[Corollary~4.7]{MS}. The resulting operators are
automorphisms with no imposed relations, and the underlying
fields of existentially closed models are algebraically closed
by \cite[Theorem~4.6]{MS}. For the corresponding result in
positive characteristic, see \cite[Section~4.2]{BHKK}.

An existentially closed field with finitely many distinguished free
generators remains existentially closed on deleting some generators.
Indeed, first take an algebraically closed extension witnessing a
finite-reduct existential condition. Each omitted automorphism of the
algebraically closed base extends independently to this larger
algebraically closed field. Existential closedness in the full language
then gives the required witness.

For countable $I$, let $\operatorname{ACFA}_I$ be the union of the finite-reduct
companions. Every finite subset of these axioms and the quantifier-free
diagram of an $I$-field is realized by embedding the appropriate finite
reduct into its companion. Compactness gives an embedding of the full
$I$-field into a model of $\operatorname{ACFA}_I$. An embedding between two such models
is elementary, since each formula uses only finitely many distinguished
automorphisms. Thus $\operatorname{ACFA}_I$ is the required companion.

Over an algebraically closed invariant field $\Omega$, two extensions
can be amalgamated using the fraction field of their tensor product
over $\Omega$. The tensor product is a domain, and each distinguished
automorphism acts on both factors. There are no relations to check.
Amalgamation and model-completeness show that $\operatorname{ACFA}_I$ together with the
quantifier-free diagram of $\Omega$ is complete. Hence an automorphism
of this $I$-field $\Omega$ is partial elementary in a model of $\operatorname{ACFA}_I$.

Finally, let $V/N$ be an absolutely irreducible affine variety.
For every finite nonempty $J\subseteq I$, the $J$-reduct of $C$
is existentially closed as a field with an action of the free
group on $J$. By \cite[Proposition~2.4]{BKTorsion}, its common
fixed field
\[
N_J:=\bigcap_{j\in J}\Fix_C(\tau_j)
\]
is PAC. Since $N\subseteq N_J$, the variety $V$ has an
$N_J$-rational point. Thus the type asserting membership in
$V$ and fixation by every $\tau_i$, $i\in I$, is finitely
satisfiable in $C$. As $I$ is finite or countable, saturation
gives a realization of this type. This is an $N$-rational
point of $V$, proving that $N$ is PAC.
\end{proof}
The result below corresponds to (D) from Section \ref{sec:chatzidakis-argument}.
\begin{lemma}\label{b:lem:completion}
Let $K$ be a PAC field of characteristic zero, and let
$(\gamma_i)_{i\in I}$ generate a dense subgroup of $\Gal(K)$,
where $I$ is nonempty and finite or countable. Let $\Omega$
be an algebraically closed field containing a fixed algebraic
closure $K^{\alg}$ of $K$. Suppose that $s,\tau_i\in\Aut(\Omega)$
satisfy
\[
s(K)=K,\qquad s\tau_i=\tau_i s,\qquad
\tau_i|_{K^{\alg}}=\gamma_i
\quad(i\in I).
\]
Put
\[
F_0:=\bigcap_{i\in I}\Fix_\Omega(\tau_i).
\]
Then there are a field extension $F_0\subseteq L$ and an
automorphism $\widetilde\sigma\in\Aut(L)$ such that
\[
K\preccurlyeq L
\qquad\text{and}\qquad
\widetilde\sigma|_{F_0}=s|_{F_0}.
\]
In particular, $\widetilde\sigma|_K=s|_K$.
\end{lemma}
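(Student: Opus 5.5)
The plan is to realize $L$ as the common fixed field of a family of automorphisms of a saturated model of $\operatorname{ACFA}_I$, so that Lemma~\ref{b:lem:free} makes it PAC. The automorphism $\widetilde\sigma$ will come from an automorphism commuting with that family, using the partial-elementarity clause of Lemma~\ref{b:lem:free}.

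First, the $I$-field $(\Omega,(\tau_i)_{i\in I})$ has algebraically closed underlying field. Embed it into a sufficiently saturated and sufficiently homogeneous model $(C,(\tau_i))\models\operatorname{ACFA}_I$. Since $s$ commutes with every $\tau_i$, it is an automorphism of the $I$-field $\Omega$. By Lemma~\ref{b:lem:free}, $s$ is then a partial elementary map of $C$. Homogeneity, arranged if necessary by taking $C$ large relative to $|\Omega|$, extends $s$ to an automorphism $\widehat s$ of $C$ commuting with all $\tau_i$.

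Put $L:=\bigcap_i\Fix_C(\tau_i)$. Then $L$ is PAC by Lemma~\ref{b:lem:free}, and $L\supseteq F_0$. Since $\widehat s$ commutes with every $\tau_i$, it preserves $L$, and $\widetilde\sigma:=\widehat s|_L$ restricts to $s$ on $F_0$. Because $\tau_i|_{K^{\alg}}=\gamma_i$ and the $\gamma_i$ generate a dense subgroup of $\Gal(K)$, we get $F_0\cap K^{\alg}=K$. Hence $L\cap K^{\alg}=K$, so $K\subseteq L$ is regular, using characteristic zero.

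The main step is to show $K\preccurlyeq L$ via the elementarity criterion for perfect PAC fields, which needs $\Gal(L)\to\Gal(K)$ to be an isomorphism. Work inside $C$, which is algebraically closed and contains $L^{\alg}$. The restriction map $\Gal(L)\to\Gal(K^{\alg}L/L)\cong\Gal(K)$ is surjective by regularity, so the issue is injectivity, namely $L^{\alg}=LK^{\alg}$.

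I would obtain this from the structure of $\operatorname{ACFA}_I$ fixed fields. The first approach is the known description from the free-group analogue \cite{BKTorsion}, following Chatzidakis--Pillay for one automorphism. It says that $\Gal(L)$ is the closed subgroup of the profinite completion generated by the restrictions $\tau_i|_{L^{\alg}}$. Equivalently, each $\tau_i$ preserves $L^{\alg}$, and $L$ is exactly the common fixed field of these restrictions, so they topologically generate $\Gal(L)$. If so, the images of the generators in $\Gal(K)$ are the $\gamma_i$. Then injectivity would follow from a "no finite extension of $L$ fixed by all $\tau_i$ yet killing $K^{\alg}$-information" argument.

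This needs more. I would use saturation to make $L$ have $\Gal(L)$ freely generated in the appropriate sense, and then check the kernel is trivial. The cleanest approach is to pick $C$, by saturation and existential closedness, so that any finite Galois extension $E/L$ is generated over $L$ by elements of $LK^{\alg}$. Concretely, a finite $\tau$-invariant extension not contained in $LK^{\alg}$ would give, by existential closedness, an $I$-field extension contradicting the density of $\langle\gamma_i\rangle$ together with the fixed action on $K^{\alg}$.

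I expect this Galois-group identification to be the main obstacle. If it fails as stated, the fallback is to replace $L$ by a suitable algebraic extension inside $L^{\alg}$ that is stable under $\widehat s$. I would take the fixed field of a closed normal complement, chosen $\widehat s$-invariantly, of the kernel of $\Gal(L)\to\Gal(K)$, with projectivity of $\Gal(K)$ supplying a section. Such an extension remains PAC, and $\widehat s$ extends to it by a commuting-lift argument like Lemma~\ref{b:lem:basechange}.
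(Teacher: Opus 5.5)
\textbf{Gap in the primary route.} Taking $L:=\bigcap_{i}\Fix_C(\tau_i)$ itself cannot work. The restriction $\Gal(L)\to\Gal(K)$ is in general not injective, so $L^{\alg}=LK^{\alg}$ is false, and no saturation or existential-closedness argument can force it. Already for $|I|=1$ the fixed field of a model of $\ACFA$ is pseudofinite, so $\Gal(L)\cong\widehat{\Z}$. Now let $P$ be pseudofinite of characteristic zero and let $K$ be the fixed field in $P^{\alg}$ of the Sylow subgroup $\Z_p\le\widehat{\Z}=\Gal(P)$. This $K$ is PAC with $\Gal(K)\cong\Z_p$, and the lemma's hypotheses hold with $\Omega=K^{\alg}$, $\tau=\gamma$ a generator, and $s=\id$. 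Your $L$ then has extensions of every degree, while $K$ has none of degree $q\ne p$, so $K\not\preccurlyeq L$. A minor point: $L\cap K^{\alg}=K$ does not follow from $F_0\cap K^{\alg}=K$, since $F_0\subseteq L$. Argue directly that $L\cap K^{\alg}\subseteq\bigcap_i\Fix_{K^{\alg}}(\gamma_i)=K$.

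\textbf{Gap in the fallback.} Your fallback has the right shape and is essentially the paper's construction. Let $N$ be the common fixed field, choose by projectivity a continuous section $j$ of $r:\Gal(N)\twoheadrightarrow\Gal(K)$, and set $H:=j(\Gal(K))$ and $L:=(N^{\alg})^H$. Then $L$ is PAC, regular over $K$, and $\Gal(L)=H\cong\Gal(K)$.

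As written, though, it has two problems. First, you ask for a \emph{normal} complement. That is unnecessary, and generally unavailable, since it would split $\Gal(N)$ as $\ker r\times H$; any closed complement suffices. Second, and more seriously, you never show that $\widehat s$ maps $L$ onto itself. ``Chosen $\widehat s$-invariantly'' is an unproved requirement. A commuting-lift argument as in Lemma~\ref{b:lem:basechange} is beside the point: $\widehat s$ is already defined on $N^{\alg}$, so the issue is stability of $L$, not extension.

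The missing idea uses a fact you already stated. The restrictions $\tau_i|_{N^{\alg}}$ topologically generate $\Gal(N)$, because their common fixed field in $N^{\alg}$ is $N$. Since $\widehat s$ commutes with every $\tau_i$, conjugation by $\widehat s$ is a continuous automorphism of $\Gal(N)$ fixing a set of topological generators. Hence $\widehat s\,g\,\widehat s^{-1}=g$ for all $g\in\Gal(N)$. Therefore $\widehat s$ preserves the fixed field of every closed subgroup of $\Gal(N)$, in particular $(N^{\alg})^H$ for whichever section you pick. So $\widetilde\sigma:=\widehat s|_L$ is an automorphism of $L$ extending $s|_{F_0}$, as required.
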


\begin{proof}
Each $\tau_i$ fixes $K$ pointwise, so $K\subseteq F_0$.
The commutation relations give $s(F_0)=F_0$. We embed $(\Omega,(\tau_i)_{i\in I})$ into a sufficiently saturated
and strongly homogeneous model
\[
(C,(\tau_i)_{i\in I})\models\operatorname{ACFA}_I.
\]
By Lemma~\ref{b:lem:free}, the automorphism
$s$ of the algebraically closed $I$-field $\Omega$ is partial
elementary. It therefore extends to an automorphism $S$ of
this entire structure. In particular,
\[
S\tau_i=\tau_iS\qquad(i\in I).
\]

Set
\[
N:=\bigcap_{i\in I}\Fix_C(\tau_i).
\]
By Lemma~\ref{b:lem:free}, $N$ is PAC. Moreover,
$F_0\subseteq N$ and $S(N)=N$. Since the $\gamma_i$
topologically generate $\Gal(K)$, we have
\[
N\cap K^{\alg}
=\bigcap_{i\in I}\Fix_{K^{\alg}}(\gamma_i)
=K.
\]
Thus $N/K$ is regular. Let $N^{\alg}$ denote the relative
algebraic closure of $N$ in $C$; it contains $K^{\alg}$.
The restriction map is consequently surjective:
\[
r:\Gal(N)\twoheadrightarrow\Gal(K).
\]

Since $K$ is PAC, $\Gal(K)$ is projective. Choose a continuous
section $j:\Gal(K)\to\Gal(N)$ of $r$, and put
\[
H:=j(\Gal(K)),\qquad L:=(N^{\alg})^H.
\]
The subgroup $H$ is closed. The extension $L/N$ is algebraic,
so $L$ is PAC.

Each $\tau_i$ fixes $N$ pointwise and preserves $N^{\alg}$.
Their restrictions to $N^{\alg}$ topologically generate
$\Gal(N)$, since their common fixed field there is exactly $N$.
Also, $S(N)=N$ implies that $S$ preserves $N^{\alg}$ and
induces a continuous conjugation automorphism of $\Gal(N)$.
As $S$ commutes with every $\tau_i$, it centralizes their
closed generated subgroup:
\[
SgS^{-1}=g\qquad\text{for $g\in\Gal(N)$}.
\]
In particular, $S(L)=L$. Define
$\widetilde\sigma:=S|_L$.

We now verify that $K\subseteq L$ is elementary.
Since $r(H)=\Gal(K)$,
\[
L\cap K^{\alg}
=(K^{\alg})^{r(H)}
=(K^{\alg})^{\Gal(K)}
=K.
\]
Thus $L/K$ is regular. Moreover, $N^{\alg}$ is also an
algebraic closure of $L$, and restriction gives an isomorphism
\[
\Gal(L)=H\xrightarrow{\ \sim\ }\Gal(K).
\]
The elementary-embedding theorem for perfect PAC fields
therefore yields $K\preccurlyeq L$.

Finally, $F_0\subseteq N\subseteq L$, and $S$ extends $s$ on
$\Omega$. Hence $\widetilde\sigma|_{F_0}=s|_{F_0}$, as required.
\end{proof}

The next result about increasing  finite periods corresponds to (C) from Section \ref{sec:chatzidakis-argument}.
\begin{lemma}\label{b:lem:periods}
There is $(K_0,\bd,\sigma)\models S_{\bd}$ such that
\begin{itemize}
  \item $M\preccurlyeq K_0$;

  \item $\sigma|_M=\mathrm{id}$;

  \item for all $m\ge1$, there is $b_m\in K_0$ such that
  \[
  \left|\left\{\sigma^i(b_m)\mid i\in \mathbb{Z}\right\}\right|=\ell^m,
  \]
  that is $b_m$ has exact $\sigma$-period $\ell^m$.
  \end{itemize}
\end{lemma}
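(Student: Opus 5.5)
The plan is to apply Lemma~\ref{b:lem:completion} with base field $K:=M$ itself, to the commuting automorphisms produced by Lemma~\ref{a:lem:hahn}\textup{(ii)}. The periods $\ell^m$ will be carried by the Puiseux roots $x^{1/\ell^m}$.

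\emph{Step 1: the input data.} Since $M\equiv F$, the field $M$ is a bounded PAC field of characteristic zero and $\mu_\infty\subseteq M$. Because $\Gal(M)$ is small, it has only countably many open normal subgroups. For each one, pick finitely many elements mapping onto the corresponding finite quotient. The resulting countable family $(\gamma_i)_{i\in I}$ has image onto every finite continuous quotient, so it generates a dense subgroup of $\Gal(M)$. Put $k:=M^{\alg}$, regard each $\gamma_i$ as an element of $\Aut(k)$, and take $\rho:=\mathrm{id}_k$. Each $\gamma_i$ fixes $\mu_\infty(k)=\mu_\infty\subseteq M$ pointwise, so Lemma~\ref{a:lem:hahn}\textup{(ii)} applies. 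It yields an algebraic closure $\Omega$ of $k(x)$ and automorphisms $s,d_i\in\Aut(\Omega)$ with the following properties:
\begin{itemize}
\item $s d_i=d_i s$;
\item $d_i|_k=\gamma_i$ and $s$ fixes $k(x)$ pointwise;
\item $b_m:=x^{1/\ell^m}$ is fixed by every $d_i$ and has exact $s$-period $\ell^m$.
\end{itemize}

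\emph{Step 2: applying the completion lemma.} Apply Lemma~\ref{b:lem:completion} with $K:=M$, $K^{\alg}=k\subseteq\Omega$, $\tau_i:=d_i$ and this $s$. The hypotheses hold: $s(M)=M$ since $s$ fixes $k$, the commutation relations hold, and $\tau_i|_{M^{\alg}}=\gamma_i$. The lemma produces $L\supseteq F_0:=\bigcap_i\Fix_\Omega(d_i)$ and $\widetilde\sigma\in\Aut(L)$ with $M\preccurlyeq L$ and $\widetilde\sigma|_{F_0}=s|_{F_0}$. Every $b_m$ lies in $F_0$ and so does $M$. Hence $\widetilde\sigma$ fixes $M$ pointwise, and in particular fixes $\bd$. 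Moreover $\widetilde\sigma^n(b_m)=s^n(b_m)=\xi_m^n b_m$, so $b_m$ has exact $\widetilde\sigma$-period $\ell^m$.

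\emph{Step 3: conclusion.} Set $K_0:=L$ and $\sigma:=\widetilde\sigma$. Since $M\preccurlyeq K_0$ and $\bd$ is interpreted in $M$, we get $(K_0,\bd)\models T_{\bd}$, hence $(K_0,\bd,\sigma)\models S_{\bd}$. All three listed properties then hold.

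The main point needing care is Step 1. One must check that a countable dense generating family exists, which is where boundedness is used. One must also check that the $\gamma_i$ fix the roots of unity, as required for the character $\epsilon$ in Lemma~\ref{a:lem:hahn}; this is immediate from $\mu_\infty\subseteq M$. Everything else is a direct invocation of the earlier lemmas.
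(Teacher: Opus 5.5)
Your proposal is correct and follows the paper's proof essentially step for step: Lemma~\ref{a:lem:hahn}\textup{(ii)} over $k=M^{\alg}$ with $\rho=\mathrm{id}$, then Lemma~\ref{b:lem:completion} with $K=M$, and finally reading off the exact periods of $b_m=x^{1/\ell^m}\in F_0$ under $\widetilde\sigma|_{F_0}=s|_{F_0}$. The only addition is your explicit argument, via smallness of $\Gal(M)$, that a countable family generating a dense subgroup exists; the paper takes this for granted.
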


\begin{proof}
Choose a nonempty finite or countable family $(\gamma_i)_{i\in I}$
of topological generators of $\Gal(M)$.
Since $\mu_\infty\subseteq M$, every $\gamma_i$ fixes all
roots of unity in $M^{\alg}$.
Apply Lemma~\ref{a:lem:hahn}\textup{(ii)} with
$k=M^{\alg}$ and $\rho=\mathrm{id}$.
This gives an algebraically closed field $\Omega$ containing
$M^{\alg}$, commuting lifts $s,d_i$ with
\[
s|_{M^{\alg}}=\mathrm{id},\qquad
d_i|_{M^{\alg}}=\gamma_i,
\]
and elements $b_m$ in
\[
F_0:=\bigcap_{i\in I}\Fix_\Omega(d_i)
\]
having exact $s$-period $\ell^m$ for every $m\ge1$.
Lemma~\ref{b:lem:completion}, applied with $K=M$,
gives $M\preccurlyeq K_0$ and an automorphism $\sigma$ of
$K_0$ extending $s|_{F_0}$.
Thus $\sigma$ fixes $M$ pointwise and preserves the exact
periods of all the $b_m$. Since $\bd\subseteq M$,
we have $(K_0,\bd,\sigma)\models S_{\bd}$.
\end{proof}

Below is the crucial result about realizing the coset at an infinite orbit which corresponds to (A) from Section \ref{sec:chatzidakis-argument}. The main difficulty still comes from combining with the corresponding (D) which is Lemma~\ref{b:lem:completion}.

\begin{lemma}\label{b:lem:infinite}
Suppose $(M,\bd)\preccurlyeq(K,\bd)\models T_{\bd}$,
$\sigma\in\Aut(K/M)$, and $b\in K$ has infinite $\sigma$-orbit.
There is an extension $(L,\bd,\widetilde\sigma)$ of
$(K,\bd,\sigma)$ in $S_{\bd}$ such that
\[
(K,\bd)\preccurlyeq(L,\bd)\qquad
\text{and}\qquad (L,\bd,\widetilde\sigma)\models\Phi_{\mathrm{PAC}}(b).
\]
\end{lemma}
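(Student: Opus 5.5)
The plan is to run the scheme of Lemma~\ref{b:lem:periods} over $K$ instead of $M$: adjoin a transcendental $x$ with $s(x)=\zeta x$, twist the lifts of the Galois generators by an $s$-invariant inertia element so that a chosen $\ell$-th root of $f/c$ becomes fixed, and then apply Lemma~\ref{b:lem:completion}. Here
\[
f:=\frac{x-b}{\zeta x-b}.
\]

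\textbf{Claim 1: commuting lifts.} By Lemma~\ref{b:lem:basechange}, applied to $M\preccurlyeq K$, there is $\rho\in\Aut(K^{\alg})$ with the following properties: it extends $\sigma$, fixes $M^{\alg}\supseteq\mu_\infty$, and commutes with $\Gal(K)$. Choose a finite or countable family $(\gamma_i)_{i\in I}$ of topological generators of $\Gal(K)$. Each $\gamma_i$ fixes $\mu_\infty\subseteq M$. Apply Lemma~\ref{a:lem:hahn}\textup{(i)} with $k=K^{\alg}$ and the prescribed root of unity $\zeta$. This gives the algebraic closure $\Omega$ of $k(x)$ and commuting automorphisms $s,d_i$ of $\Omega$ with
\[
s|_{k}=\rho,\qquad s(x)=\zeta x,\qquad d_i|_k=\gamma_i,\qquad d_i(x)=x.
\]
Since $\zeta\in M$, we also have $s(\zeta)=\zeta$, so Lemma~\ref{a:lem:ramification}\textup{(iv)} applies. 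Fix $y_0\in\Omega$ with $y_0^\ell=f/c$. The $d_i$ fix $x$, $b$ and $\zeta$, so $d_i(f)=f$. Hence $d_i(y_0)=\lambda_iy_0$, where $\lambda_i=c^{1/\ell}/\gamma_i(c^{1/\ell})\in\mu_\ell$ for a fixed root $c^{1/\ell}$.

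\textbf{Claim 2: an $s$-invariant inertia correction.} Write $f=\zeta^{-1}(x-b)/(x-\zeta^{-1}b)$. Lemma~\ref{a:lem:ramification}\textup{(iii)} shows that $k(x,y_0)/k(x)$ is cyclic of degree $\ell$, ramified exactly at $b$ and $\zeta^{-1}b$. Choose an inertia automorphism $g_0\in G:=\Gal(\Omega/k(x))$ above $b$ with $g_0(y_0)=\xi y_0$, where $\xi$ is a primitive $\ell$-th root of unity. Put $g_j:=s^jg_0s^{-j}$. By Lemma~\ref{a:lem:ramification}\textup{(iv)}, $g_j$ is inertia above $a_j:=\zeta^{-j}\sigma^j(b)$.

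These points are pairwise distinct. Indeed, $a_i=a_j$ gives $\sigma^{j-i}(b)=\zeta^{j-i}b$, hence $\sigma^{q(j-i)}(b)=b$, contradicting the infinite orbit. The same argument shows $a_j\neq\zeta^{-1}b$ for every $j$.

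Now take a finite quotient of $G$, corresponding to a finite Galois extension $E/k(x)$. By Lemma~\ref{a:lem:ramification}\textup{(i)}, \textup{(ii)}, only finitely many $g_j$ act nontrivially on $E$. So Lemma~\ref{lem:ordered-product}, applied with conjugation by $s$, yields $\eta\in G$ that commutes with $s$. On $k(x,y_0)$ only $g_0$ acts nontrivially, so $\eta(y_0)=\xi y_0$.

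Choose $e_i$ with $\xi^{e_i}\lambda_i=1$ and put $\tau_i:=d_i\eta^{e_i}$. These maps have the following properties:
\begin{itemize}
\item $\tau_i$ commutes with $s$, because both $d_i$ and $\eta$ do;
\item $\tau_i|_{K^{\alg}}=\gamma_i$, because $\eta$ fixes $k$;
\item $\tau_i(y_0)=\xi^{e_i}\lambda_iy_0=y_0$, because $d_i$ fixes $\xi\in\mu_\infty$.
\end{itemize}

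\textbf{Completion.} Apply Lemma~\ref{b:lem:completion} to $K$, the family $(\gamma_i)$, the automorphism $s$ and the maps $\tau_i$. Since $s(K)=K$ and $s|_K=\sigma$, this gives $K\preccurlyeq L$ and $\widetilde\sigma$ extending $s|_{F_0}$. The elements $x$ and $y_0$ lie in $F_0$, and $\widetilde\sigma(x)=\zeta x$. Moreover $x-b\neq0\neq\zeta x-b$ because $x$ is transcendental, and $x-b=c(\zeta x-b)y_0^\ell$. Since $\bd\subseteq M$ is fixed, $(L,\bd,\widetilde\sigma)\models S_{\bd}$ and $(K,\bd)\preccurlyeq(L,\bd)$, so $(L,\bd,\widetilde\sigma)\models\Phi_{\mathrm{PAC}}(b)$.

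The main obstacle is Claim 2. The difficulty is producing one correcting element that commutes with $s$ while acting correctly on $y_0$. This is where the infinite orbit is used, both for the distinctness of the points $a_j$ and for the convergence of the ordered product.
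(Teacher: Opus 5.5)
Your proposal is correct and follows essentially the same route as the paper. Both arguments use the commuting coefficientwise lifts from Lemma~\ref{a:lem:hahn}\textup{(i)} and an $s$-invariant correction $\eta$, obtained as the ordered product of the conjugated inertia elements above the distinct points $\zeta^{-j}\sigma^j(b)$, followed by the twisted lifts $d_i\eta^{e_i}$ and Lemma~\ref{b:lem:completion}. The only deviation is that you take an arbitrary $\ell$-th root $y_0$ of $f/c$ and use only that $d_i(y_0)/y_0\in\mu_\ell$, which lets you skip the paper's Hensel step producing a $d_i$-fixed root $\alpha$ of $f$; your explicit formula for $\lambda_i$ would need such a root to justify, but it is never used.
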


\begin{proof}
By Lemma~\ref{b:lem:basechange}, we have
$K^{\alg}=KM^{\alg}$ and $\sigma$ extends to
$\rho\in\Aut(K^{\alg})$ fixing $M^{\alg}$ pointwise and centralizing
$\Gal(K)$. Let us choose $\beta\in M^{\alg}$ with $\beta^\ell=c$ and $\omega\in \mu_\ell\setminus \{1\}$.

Since $K$ is bounded, $\Gal(K)$ is small and has a finite or
countable family $(\gamma_i)_{i\in I}$ of topological generators.
We first construct suitable commuting lifts.

\begin{claim}\label{cl1}
There are an element $x$ transcendental over $K$, an algebraic
closure $\Omega$ of $K(x)$ containing $K^{\alg}$, an element
$\alpha\in\Omega$, and automorphisms $s,d_i\in\Aut(\Omega)$ such that
\[
\alpha^\ell=\frac{x-b}{\zeta x-b},
\]
and, for every $i\in I$,
\[
s|_{K^{\alg}}=\rho,\qquad
d_i|_{K^{\alg}}=\gamma_i,\qquad
s(x)=\zeta x,\qquad d_i(x)=x,\qquad
sd_i=d_is,\qquad d_i(\alpha)=\alpha.
\]
\end{claim}

\begin{claimproof}
Since $b$ has infinite $\sigma$-orbit, we have $b\ne0$.
Every $\gamma_i$ fixes all roots of unity in $K^{\alg}$,
since these belong to $M\subseteq K$.
Apply Lemma~\ref{a:lem:hahn}\textup{(i)} with
$k=K^{\alg}$ and series variable $x$, choosing
$\epsilon(1)=\zeta$.
We obtain an algebraic closure $\Omega$ of $K(x)$
containing $K^{\alg}$ inside
\[
A:=\bigcup_{m\ge1}K^{\alg}((x^{1/m})),
\]
together with automorphisms $s,d_i$ of $A$ restricting to $\Omega$ and satisfying
\[
s|_{K^{\alg}}=\rho,\qquad
d_i|_{K^{\alg}}=\gamma_i,\qquad
s(x)=\zeta x,\qquad d_i(x)=x,\qquad sd_i=d_is.
\]
Here each $d_i$ is induced by the coefficientwise action
of $\gamma_i$ on $A$.

Since $b\ne0$, we have
\[
R(x):=\frac{x-b}{\zeta x-b}
     =\frac{1-x/b}{1-\zeta x/b}
     \in1+xK[[x]]\subset A.
\]
The reduction modulo $x$ of $Y^\ell-R(x)$ is
$Y^\ell-1$, which has the simple root $1$ because
$\charac(K)=0$.
Hensel's lemma therefore gives a unique
$\alpha\in1+xK[[x]]$ such that
\[
\alpha^\ell=R(x)=\frac{x-b}{\zeta x-b}.
\]
This $\alpha$ is algebraic over $K(x)$ and hence belongs
to $\Omega$. Finally, every $d_i$ fixes $K[[x]]$
pointwise, so $d_i(\alpha)=\alpha$ for all $i\in I$.
\end{claimproof}
To realize $\Phi_{\mathrm{PAC}}(b)$, we need the corrected
lifts to fix $y:=\alpha/\beta$. Write
\[
d_i(\beta)=\omega^{a_i}\beta,
\qquad \text{where }a_i\in\{0,\ldots,\ell-1\}.
\]
Since $d_i(\alpha)=\alpha$, we have
$d_i(y)=\omega^{-a_i}y$.
Claim~\ref{cl2} provides an automorphism
$\eta\in\Gal(\Omega/K^{\alg}(x))$ commuting with $s$
and satisfying $\eta(\alpha)=\omega\alpha$.
The corrected lifts $d_i\eta^{a_i}$ will then fix $y$
and still commute with $s$.

Choose $\lambda\in K^{\alg}$ with $\lambda^\ell=\zeta$.
Then
\[
(\lambda\alpha)^\ell
=\frac{x-b}{x-\zeta^{-1}b},
\qquad
K^{\alg}(x,\lambda\alpha)=K^{\alg}(x,\alpha).
\]
Since $b\ne0$ and $\zeta\ne1$, the points $b$ and
$\zeta^{-1}b$ are distinct.
Lemma~\ref{a:lem:ramification}(iii), applied to
$\lambda\alpha$, therefore shows that
$K^{\alg}(x,\alpha)/K^{\alg}(x)$ is cyclic of degree $\ell$,
totally ramified at $b,\zeta^{-1}b$, and unramified elsewhere.
\begin{claim}\label{cl2}
There is $\eta\in \Gal(\Omega/K^\alg(x))$ such that
\[
s\eta s^{-1}=\eta\qquad \text{and}\qquad \eta(\alpha)=\omega\alpha.
\]
\end{claim}
\begin{claimproof}
Let us set $G:=\Gal(\Omega/K^\alg(x))$. Since $b$ is a point of total ramification, by the ``Moreover'' part of Lemma \ref{a:lem:ramification}(iii), there is  an inertia element $g_0\in G$ above $b$ with
$g_0(\alpha)=\omega\alpha$. For $k\in\Z$, set
\[
g_k:=s^kg_0s^{-k},\qquad b_k':=\zeta^{-k}\sigma^k(b).
\]
By Lemma~\ref{a:lem:ramification}(iv), $g_k$ is an inertia
element above $b_k'$.

\smallskip
\noindent\textbf{Subclaim.}
The points $b_k'$ are pairwise distinct, and
$b_k'\notin\{b,\zeta^{-1}b\}$ whenever $k\ne0$.

\smallskip
\noindent\emph{Proof of the Subclaim.}
Since $\zeta\in M$ and $\sigma|_M=\id$, we have
$\sigma(\zeta)=\zeta$.
For $r\in\Z\setminus\{0\}$ and $h\in\Z$, an equality
$\sigma^r(b)=\zeta^h b$ would therefore imply
\[
\sigma^{qr}(b)=\zeta^{qh}b=b,
\]
contrary to the infinite $\sigma$-orbit of $b$.

If $b_i'=b_j'$ with $i\ne j$, then
\[
\sigma^{i-j}(b)=\zeta^{i-j}b,
\]
contradicting the preceding observation.
Likewise, if $b_k'=\zeta^{-\delta}b$ for
$k\ne0$ and $\delta\in\{0,1\}$, then
\[
\sigma^k(b)=\zeta^{k-\delta}b,
\]
which is again impossible.
\hfill$\square_{\text{\tiny Subclaim}}$

\smallskip
By Lemma \ref{a:lem:ramification}(i), every finite Galois extension $E/K^\alg(x)$ in $\Omega$ ramifies at only
finitely many points. Since $K^\alg$ is algebraically closed, inertia at an
unramified point restricts trivially to $E$ by Lemma \ref{a:lem:ramification}(ii). Since the points $b_k'$ are pairwise distinct (see Subclaim), it follows that
\[
g_k|_E=\mathrm{id}\quad\text{for all but finitely many }k.
\]
Apply Lemma~\ref{lem:ordered-product} to the continuous
automorphism
\[
G\longrightarrow G,\qquad g\longmapsto sgs^{-1}.
\]
Its iterates send $g_0$ to $g_k=s^kg_0s^{-k}$.
We obtain the convergent ordered product
\[
\eta=\lim_{r\to\infty}g_{-r}g_{-r+1}\cdots g_r\in G
\]
such that $s\eta s^{-1}=\eta$. The condition
\[
b_k'\notin\{b,\zeta^{-1}b\}\qquad(k\ne0)
\]
from Subclaim ensures that every $g_k$ with $k\ne0$ fixes $\alpha$.
Indeed, the finite Galois extension
$K^{\alg}(x)\subset K^{\alg}(x,\alpha)$ is ramified only at $b$ and $\zeta^{-1}b$.
Since $g_k$ is an inertia element above $b_k'$,
Lemma~\ref{a:lem:ramification}(ii) gives
\[
g_k|_{K^{\alg}(x,\alpha)}=\id
\qquad(k\ne0).
\]
Consequently, every finite product defining $\eta$ acts on
$\alpha$ exactly as $g_0$:
\[
(g_{-r}\cdots g_r)(\alpha)=g_0(\alpha)=\omega\alpha.
\]
Passing to the limit yields $\eta(\alpha)=\omega\alpha$ by Lemma~\ref{lem:ordered-product} again.
\end{claimproof}
Let us set
\[
\tau_i:=d_i\eta^{a_i}.
\]
Then $\tau_i|_{K^{\alg}}=\gamma_i$, each $\tau_i$ fixes $x$, and
$s\tau_i=\tau_i s$. Moreover, we obtain
\[
\tau_i(\alpha)=\omega^{a_i}\alpha,\qquad
\tau_i(\beta)=\gamma_i(\beta)=\omega^{a_i}\beta\qquad \qquad \Longrightarrow\qquad \qquad
\tau_i(\alpha/\beta)=\alpha/\beta.
\]
We finally define $y:=\alpha/\beta$. By Lemma~\ref{b:lem:completion}, we get $K\preccurlyeq L$ and $\widetilde{\sigma}\in \Aut(L)$ such that
\[
x,y\in L,\qquad
\widetilde\sigma(x)=\zeta x,\qquad
\frac{x-b}{\zeta x-b}=\alpha ^\ell=\beta ^\ell\frac{\alpha ^\ell}{\beta ^\ell}=cy^\ell,\qquad \widetilde{\sigma}|_K=\sigma.
\]
By Remark \ref{meaning}, we obtain
\[
(L,\bd,\widetilde{\sigma})\models \Phi_{\mathrm{PAC}}(b).
\]
Elementarity on the
field reduct preserves the full type of $\bd$, hence $(K,\bd)\preccurlyeq(L,\bd)$.
\end{proof}

We can finish now the proof of our first main result.
\begin{proof}[Proof of Theorem~\ref{b:thm:main}]
Proposition~\ref{b:prop:constants} gives model-completeness of $T_{\bd}$
and inductivity of $S_{\bd}$. Suppose $S_{\bd}$ had a model companion
$S_{\bd}^*$. Because $S_{\bd}$ is inductive, the models of $S_{\bd}^*$
would be precisely its existentially closed models, and would
satisfy $S_{\bd}$.

We embed the model from Lemma~\ref{b:lem:periods} into
$(K,\bd,\sigma)\models S_{\bd}^*$. Model-completeness of $T_{\bd}$
makes this embedding elementary on the field reduct. The fixed base
$M$ and the exact periods are preserved. Lemma~\ref{b:lem:period}
gives $\neg\Phi_{\mathrm{PAC}}(b_m)$ for every $m$.

Let us take a nonprincipal ultrapower, with
\[
(K^*,\bd,\sigma)=(K,\bd,\sigma)^{\N_{>0}}/\mathcal U,\qquad
b^*=[(b_m)_m]_{\mathcal U}.
\]
It is a model of $S_{\bd}^*$ satisfying $\neg\Phi_{\mathrm{PAC}}(b^*)$. For each fixed
$k\ge1$, the inequality $\sigma^k(b_m)\ne b_m$ holds eventually,
since $\ell^m>k$. Thus $b^*$ has infinite orbit. The diagonal copy
of $M$ remains elementary and pointwise fixed. Lemma~\ref{b:lem:infinite}
produces an extension in $S_{\bd}$ realizing the existential formula
$\Phi_{\mathrm{PAC}}(b^*)$, contradicting existential closedness.
\end{proof}

\begin{corollary}\label{b:cor:free}
Let $F$ be a PAC field of characteristic zero containing $\mu_\infty$.
If $\Gal(F)$ is free profinite of finite rank $e\ge1$, or free
pro-$p$ of finite rank $e\ge1$, then the conclusions of
Theorem~\ref{b:thm:main} hold.
\end{corollary}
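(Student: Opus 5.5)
The plan is to verify the hypotheses \eqref{b:eq:hyp} of Theorem~\ref{b:thm:main} for such $F$ and then invoke the theorem. Characteristic zero, the PAC property and $\mu_\infty\subseteq F$ are assumed outright. Two things remain: boundedness of $F$, and the existence of a continuous surjection $\Gal(F)\twoheadrightarrow C_\ell$ for some prime $\ell$.

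For boundedness, I use the equivalence recalled in Section~\ref{sec:tools}: $F$ is bounded if and only if $\Gal(F)$ is small, meaning it has only finitely many open subgroups of each index. A finitely generated profinite group is small. Each open subgroup $U$ of index $n$ is determined by the continuous action of $G$ on $G/U$, which is a homomorphism $G\to S_n$. Such a homomorphism is fixed by the images of the $e$ topological generators, so there are at most $(n!)^e$ of them. Free profinite and free pro-$p$ groups of finite rank $e$ are topologically generated by $e$ elements, so they are small, and $F$ is bounded.

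For the cyclic quotient, I treat the two cases separately. If $\Gal(F)\cong\widehat F_e$ is free profinite of rank $e\ge1$, the universal property lets me send one free generator to a generator of $C_\ell$ and the others to $1$. This gives a continuous surjection onto $C_\ell$ for any prime $\ell$, for instance $\ell=2$. If $\Gal(F)$ is free pro-$p$ of rank $e\ge1$, I take $\ell=p$. Since $C_p$ is a finite $p$-group, the universal property of the free pro-$p$ group again yields a continuous surjection sending one generator to a generator of $C_p$. In both cases rank $e\ge1$ is exactly what makes the map nontrivial, hence surjective.

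With \eqref{b:eq:hyp} verified, Theorem~\ref{b:thm:main} supplies the countable tuple $\bd$ with $\Th(F,\bd)$ model-complete and $S_{\bd}$ without a model companion. I expect no real obstacle. The only point needing care is the smallness argument, namely counting open subgroups through homomorphisms to finite symmetric groups. The content of the corollary lies entirely in the theorem.
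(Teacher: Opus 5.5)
Your proposal is correct and follows the same route as the paper's proof. Boundedness comes from smallness of finitely generated profinite groups, and the quotient $C_\ell$ (any $\ell$ in the free profinite case, $\ell=p$ in the free pro-$p$ case) comes from the universal property; then Theorem~\ref{b:thm:main} applies. Your count of open subgroups through continuous homomorphisms to $S_n$ simply fills in the standard smallness fact that the paper states without proof.
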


\begin{proof}
A finitely generated profinite group is small, hence $F$ is bounded.
A nontrivial finite-rank free profinite group has a quotient $C_\ell$
for every prime $\ell$; a nontrivial free pro-$p$ group has a quotient
$C_p$. It is enough now to apply Theorem~\ref{b:thm:main}.
\end{proof}

\begin{remark}
Let us note that boundedness has the following three roles in our arguments.
\begin{enumerate}
  \item It allows
us to obtain a model-complete theory by naming countably many
constants, as in Proposition~\ref{b:prop:constants}; this also makes
the associated automorphism theory inductive.

  \item It gives
the elementary base-change identity $K^{\alg}=KM^{\alg}$ for
$M\preccurlyeq K$, which permits the simultaneous commuting
coefficient lifts.

  \item It ensures that the absolute Galois
group has a countable dense generating set, allowing the
countable auxiliary construction. Countability is only a convenience here: the same auxiliary
construction works for arbitrary index sets $I$, using a sufficiently
saturated and homogeneous model of $\operatorname{ACFA}_I$.
\end{enumerate}
\end{remark}

\begin{remark}\label{nop}
The PAC argument does not directly extend to arbitrary PAC fields
of characteristic $p>0$. For an imperfect PAC field $K$, the
principal difficulty is the completion construction: a common
fixed field of automorphisms of an algebraically closed field
is necessarily perfect, since the unique $p$th root of a fixed
element is also fixed. Consequently, the fields produced by
this construction cannot be elementary extensions of $K$;
indeed,
\[
\left(K^{\alg}\right)^{\Gal(K)}=K^{\mathrm{perf}}\quad \text{(the perfect closure of $K$)},
\]
rather than $K$.

There is a separate difficulty when the available cyclic-prime
quotient has order $p$: Kummer theory no longer applies, there
are no nontrivial $p$th roots of unity, and a perfect field has
no nontrivial multiplicative cosets modulo $p$th powers.
Treating this case would require an Artin--Schreier replacement
for the multiplicative obstruction and a corresponding
modification of the period construction.

Finally, Puiseux series are not algebraically closed in positive
characteristic, although this particular issue can be resolved
using the Hahn-field construction of Remark~\ref{pposit}.
For perfect bounded PAC fields containing
$\mathbb F_p^\alg$ and admitting a cyclic quotient of
prime order $\ell\ne p$, the specific obstacles above can be
avoided by replacing Puiseux series with Hahn series and choosing
the rotation order to be a prime
\[
q\notin\{p,\ell\}.
\]
\end{remark}
We finish this section with the following table summarizing a possible usage of the choices of primes $\ell$ and $q$.
\begin{center}
\small
\textbf{Parameter choices for the characteristic-zero PAC theorem.}
\par\smallskip
\renewcommand{\arraystretch}{1.2}
\begin{tabular}{@{}p{0.50\textwidth}cc@{}}
\toprule
Absolute Galois group & Kummer prime $\ell$ & Rotation order $q$\\
\midrule
$\widehat{\Z}$ (pseudofinite case) & $3$ & $2$\\
Free profinite, finite rank $e\ge1$ & $3$ & $2$\\
Free pro-$p$, finite rank, $p$ odd & $p$ & $2$\\
Free pro-$2$, finite rank & $2$ & $3$\\
\bottomrule
\end{tabular}
\end{center}
\section{A second automorphism of ACFA}\label{sec:acfa}
In this section, we show that the road map from Section \ref{sec:chatzidakis-argument} still works if we consider models of ACFA with automorphisms. We fix a completion $T$ of $\ACFA$, write $\sigma$ for the distinguished automorphism in the language of $T$, and define
\[
 S_T:=T\cup\{\text{$\tau$ is a field automorphism and $\tau\sigma=\sigma\tau$}\}.
\]
We first establish the preservation of the prescribed completion,
corresponding to property~(D) from
Section~\ref{sec:chatzidakis-argument}.
\begin{lemma}\label{a:lem:acfa}
Let $(A,\sigma_A)\models T$ and $B$ be an algebraically closed field with commuting automorphisms $\sigma_B,\tau_B$ such that $(A,\sigma_A)\subseteq (B,\sigma_B)$. Then $(B,\sigma_B,\tau_B)$ embeds into $(C,\sigma_C,\tau_C)$, where $\sigma_C\tau_C=\tau_C\sigma_C$ and $(C,\sigma_C)\models T$.
\end{lemma}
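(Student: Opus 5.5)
The plan is to embed $(B,\sigma_B)$ into a sufficiently saturated and strongly homogeneous model of $\ACFA$, and then obtain $\tau_C$ by extending $\tau_B$ as an automorphism of that model. This mirrors the way Lemma~\ref{b:lem:completion} used Lemma~\ref{b:lem:free}.

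\emph{Step 1: embedding into a model of $T$.} Since $\ACFA$ is the model companion of the inductive theory of difference fields, $(B,\sigma_B)$ embeds into some $(C,\sigma_C)\models\ACFA$. Replacing $C$ by an elementary extension, I take $(C,\sigma_C)$ to be $|B|^+$-saturated and strongly homogeneous. I then check that $(C,\sigma_C)\models T$. By Chatzidakis--Hrushovski, the completion of $\ACFA$ satisfied by a model is determined by the isomorphism type of the difference field $(\mathbb F^{\alg},\sigma|_{\mathbb F^{\alg}})$, where $\mathbb F$ is the prime field. Since $A$ is algebraically closed, $\mathbb F^{\alg}\subseteq A\subseteq B\subseteq C$, and $\sigma_C$ restricts to $\sigma_A$ on $\mathbb F^{\alg}$. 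Hence $(C,\sigma_C)$ and $(A,\sigma_A)$ have the same completion, namely $T$.

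\emph{Step 2: the extension of $\tau_B$.} Since $\tau_B$ commutes with $\sigma_B$, it is an automorphism of the difference field $(B,\sigma_B)$. By hypothesis $B$ is algebraically closed, so $(B,\sigma_B)$ is an algebraically closed difference subfield of $(C,\sigma_C)$. The basic fact from Chatzidakis--Hrushovski applies here: in a model of $\ACFA$, an isomorphism between algebraically closed difference subfields is a partial elementary map. Equivalently, the type of such a subfield is determined by its quantifier-free diagram, by the description of completions over algebraically closed difference fields. Thus $\tau_B$ is partial elementary in $(C,\sigma_C)$. Since $|B|<|C|$ and $C$ is strongly homogeneous, $\tau_B$ extends to an automorphism $\tau_C$ of the structure $(C,\sigma_C)$. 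Being an automorphism of the $L_\sigma$-structure, $\tau_C$ satisfies $\tau_C\sigma_C=\sigma_C\tau_C$. This gives the required embedding $(B,\sigma_B,\tau_B)\hookrightarrow(C,\sigma_C,\tau_C)$.

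\emph{Main obstacle.} The only nontrivial input is the elementarity of $\tau_B$ in Step~2. It depends essentially on $B$ being algebraically closed: over non-algebraically closed difference subfields, quantifier-free isomorphisms need not be elementary in $\ACFA$. This is exactly why the lemma assumes $B$ algebraically closed. The remaining care is set-theoretic, namely choosing saturation and homogeneity above $|B|$. If one prefers to avoid strongly homogeneous models, I would first try a back-and-forth chain of length $\omega$: alternately extend the current partial elementary map using saturation, close up under $\sigma$ and take algebraic closure, and finally take the union.
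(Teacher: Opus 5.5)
Your proposal is correct and follows the paper's proof. You embed $(B,\sigma_B)$ into a saturated, strongly homogeneous model of $\ACFA$. You use the Chatzidakis--Hrushovski completeness of $\ACFA\cup\operatorname{Diag}_{\mathrm{qf}}(B,\sigma_B)$ for algebraically closed $B$ to see that $\tau_B$ is partial elementary. You then extend it to an automorphism $\tau_C$, which automatically commutes with $\sigma_C$.

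The only difference is how you check $(C,\sigma_C)\models T$. You invoke the classification of completions by the action of $\sigma$ on $\mathbb F^{\alg}$, while the paper notes that model-completeness of $\ACFA$ gives $(A,\sigma_A)\preccurlyeq(C,\sigma_C)$ directly. Both arguments are valid; the paper's is slightly more economical.
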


\begin{proof}
Embed $(B,\sigma_B)$ into a sufficiently saturated and strongly homogeneous model $(C,\sigma_C)$ of $\ACFA$. Here and below, $\operatorname{Diag}_{\mathrm{qf}}(N)$ denotes
the set of all quantifier-free sentences in the language
expanded by constants for the elements of $N$ that hold in $N$. Since $B$ is algebraically closed and $\sigma_B$ is an
automorphism of $B$, \cite[Theorem~1.3, p.~3008]{CH}
implies that the theory
\[
\ACFA\cup\operatorname{Diag}_{\mathrm{qf}}(B,\sigma_B)
\]
is complete in the difference-field language with constants
for all elements of $B$. Thus the difference-field automorphism $\tau_B$ is a partial elementary map of $(C,\sigma_C)$, and extends to an automorphism $\tau_C$ of that structure. In particular, $\tau_C$ commutes with $\sigma_C$. Since $(A,\sigma_A)\models T$ and
$(A,\sigma_A)\subseteq(C,\sigma_C)$, model-completeness of
$\ACFA$ gives
$(A,\sigma_A)\preccurlyeq(C,\sigma_C)$.
Hence $(C,\sigma_C)\models T$.
 \end{proof}
Let us fix now an odd prime $\ell\neq\charac(T)$ and a primitive $\ell$-th root $\zeta$ in a model $(A_0,\sigma)\models T$. Choose $c_0,c_1\in A_0$ and $\varepsilon\in\{1,-1\}$ by the following rule:
\begin{enumerate}[label=\textup{(\alph*)}]
\item If $\sigma(\zeta)\neq\zeta$, set $c_0=0,c_1=1$ and $\varepsilon=1$.
\item If $\sigma(\zeta)=\zeta$, choose a two-cycle $c_0\neq c_1$ with $\sigma(c_0)=c_1$ and $\sigma(c_1)=c_0$, and set $\varepsilon=-1$.
\end{enumerate}
The two-cycle in (b) exists by existential closedness. Since $\ell$ is odd, both cases give
\begin{equation}\label{a:eq:mismatch}
 \sigma(\zeta)\neq\zeta^\varepsilon.
\end{equation}
With these parameters, we define the following existential formula
\[
\Phi_{\mathrm{ACFA}}(z):=\exists x\,\exists y
 \left[
 \sigma(x)=\tau(x)=x\ \wedge\
 c_0\neq x+z\neq c_1\ \wedge\
 y^\ell=\frac{x+z-c_0}{x+z-c_1}\ \wedge\
 \tau(y)=\zeta y\right].
\]
Retain the parameters chosen above throughout this section.
For the next two results, let $(k,\sigma,\tau)$ be a field with commuting automorphisms
containing the difference subfield of $(A_0,\sigma)$ generated
by $c_0,c_1,\zeta$, and suppose that $\tau$ fixes these parameters.
Let $b\in k$, and put
\[
a:=c_0-b,\qquad d:=c_1-b.
\]
The next proposition is the construction needed to realize an existential formula in an extension, which corresponds to property (A) from Section \ref{sec:chatzidakis-argument}. The proof follows the lines of the proof of Lemma \ref{b:lem:infinite}, so we will be brief.
\begin{proposition}\label{a:prop:realization}
Assume that $(k,\sigma,\tau)\models S_T$ and $\tau(b)=b$.
If $a$ has infinite $\sigma$-orbit not containing $d$, then there is an $S_T$-extension
\[
(k,\sigma,\tau)\subseteq(K,\sigma,\tau)
\]
such that $(K,\sigma,\tau)\models\Phi_{\mathrm{ACFA}}(b)$.
\end{proposition}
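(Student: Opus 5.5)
The plan is to repeat the construction of Lemma~\ref{b:lem:infinite}, with the fixed field $k$ of characteristic possibly positive in place of $K$, and then to replace the PAC completion by Lemma~\ref{a:lem:acfa}. Since $(k,\sigma)\models T$ is a model of $\ACFA$, the field $k$ is algebraically closed. Note that $x+b-c_0=x-a$ and $x+b-c_1=x-d$, and that $a\ne d$ because $c_0\ne c_1$. So it suffices to find an algebraically closed field $\Omega\supseteq k(x)$, with $x$ transcendental over $k$, together with commuting automorphisms $\sigma,\tau'$ of $\Omega$ extending $(\sigma,\tau)$ on $k$, fixing $x$, and an element $y\in\Omega$ with
\[
y^\ell=\frac{x-a}{x-d},\qquad \tau'(y)=\zeta y .
\]
The inequalities $c_0\ne x+b\ne c_1$ are then automatic, since $x$ is transcendental over $k$.

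\emph{Step 1: commuting lifts.} I would apply Lemma~\ref{a:lem:hahn} in the Hahn-field form of Remark~\ref{pposit}, so that arbitrary characteristic is covered. Take $\rho=\sigma|_k$, the single $\gamma=\tau|_k$, and the trivial character $\epsilon$. This gives commuting coefficientwise automorphisms $s,d$ of $\Omega$, the relative algebraic closure of $k(x)$ in $k((x^{\Q}))$, both fixing $x$. Choose $y\in\Omega$ with $y^\ell=f:=(x-a)/(x-d)$. Since $\tau$ fixes $b,c_0,c_1$, we have $d(f)=f$, so $d(y)=\xi y$ for some $\xi\in\mu_\ell$. Moreover $\tau$ fixes $\zeta$, hence all of $\mu_\ell$.

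\emph{Step 2: the correcting element.} By Lemma~\ref{a:lem:ramification}(iii), valid because $\ell\ne\charac(T)$ and $a\ne d$, the extension $k(x,y)/k(x)$ is cyclic of degree $\ell$, totally ramified exactly at $a$ and $d$. It also supplies an inertia element $g_0\in G:=\Gal(\Omega/k(x))$ above $a$ with $g_0(y)=\zeta y$. Put $g_j:=s^jg_0s^{-j}$. By Lemma~\ref{a:lem:ramification}(iv), applied with the scalar equal to $1$ since $s(x)=x$, each $g_j$ is inertia above $\sigma^j(a)$. The hypothesis that $a$ has infinite orbit makes these points pairwise distinct, and the hypothesis that $d$ is not in the orbit gives $\sigma^j(a)\notin\{a,d\}$ for $j\ne0$.

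Lemma~\ref{a:lem:ramification}(i),(ii) then show that in each finite Galois quotient almost all $g_j$ act trivially, and that $g_j$ fixes $y$ for $j\ne0$. Applying Lemma~\ref{lem:ordered-product} to conjugation by $s$ yields $\eta\in G$ with $s\eta s^{-1}=\eta$ and $\eta(y)=\zeta y$. Now choose $j\in\Z/\ell$ with $\zeta^j\xi=\zeta$ and set $\tau':=d\eta^j$. Then $\tau'$ extends $\tau$ on $k$, fixes $x$, commutes with $s$, and satisfies $\tau'(y)=\zeta y$.

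\emph{Step 3: preserving $T$.} Apply Lemma~\ref{a:lem:acfa} with $A=k$ and $B=\Omega$, carrying $\sigma_B=s$ and $\tau_B=\tau'$. This gives $(C,\sigma_C,\tau_C)\supseteq(\Omega,s,\tau')$ with commuting automorphisms and $(C,\sigma_C)\models T$. Hence $(C,\sigma_C,\tau_C)$ is an $S_T$-extension of $(k,\sigma,\tau)$, and $(x,y)$ witnesses $\Phi_{\mathrm{ACFA}}(b)$ in it.

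The main obstacle I expect is the positive-characteristic bookkeeping in Step~2. Here $\Omega$ is not separably generated over $k(x)$ by Puiseux series, so $G$ should be understood as acting through the separable closure. I would also check that the ramification facts (i)--(iv) and the lifting of inertia hold for the tame Kummer extension. Since $\ell\ne p$ and all these statements concern finite separable subextensions, I expect this to go through unchanged.
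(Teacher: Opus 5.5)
Your proposal is correct and follows the paper's proof essentially step for step: coefficientwise commuting lifts on the relative algebraic closure of $k(x)$ in $k((x^{\Q}))$, a $\sigma$-invariant ordered product $\eta$ of conjugates of an inertia element above $a$, correction of the lift of $\tau$ by a power of $\eta$ (your $d\eta^j$ versus the paper's $\eta^{1-j}\tau_0$ is immaterial), and then Lemma~\ref{a:lem:acfa}. Your positive-characteristic caveat, that $G$ should be read as $\Aut(\Omega/k(x))\cong\Gal(k(x)^{\sep}/k(x))$, is a harmless clarification of a point the paper leaves implicit.
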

\begin{proof}
Let $H:=k((x^{\Q}))$ and $\Omega$ be the relative algebraic closure of $k(x)$
in $H$. By Remark~\ref{pposit}, $\Omega$ is an algebraic closure
of $k(x)$, and the coefficientwise extensions of $\sigma$ and
$\tau$ to $H$ restrict to commuting automorphisms $\sigma_0,\tau_0$
of $\Omega$ fixing $x$. Let us choose $y\in\Omega$ such that
\[
y^\ell=\frac{x+b-c_0}{x+b-c_1}
      =\frac{x-a}{x-d}.
\]
Since $\tau$ fixes $b,c_0,c_1$, $\tau_0$
fixes $y^\ell$. Thus, for some
$j\in\{0,\ldots,\ell-1\}$, we have
\begin{equation}\label{a:eq:tau0y}
\tau_0(y)=\zeta^j y.
\end{equation}
By Lemma~\ref{a:lem:ramification}(iii), the extension
$k(x,y)/k(x)$ is cyclic of degree $\ell$, totally ramified
at $a,d$, and unramified elsewhere. Let us choose an inertia element
\[
\rho\in G:=\Gal(\Omega/k(x))
\]
above $a$ such that $\rho(y)=\zeta y$.
For $i\in\Z$, we set $\rho_i:=\sigma_0^i\rho\sigma_0^{-i}$. 
Since $\sigma_0(x)=x$, Lemma~\ref{a:lem:ramification}(iv)
shows that $\rho_i$ is an inertia element above $\sigma^i(a)$.

Every finite Galois extension $E/k(x)$ inside $\Omega$
ramifies at only finitely many points. Since the points
$\sigma^i(a)$ are pairwise distinct,
Lemma~\ref{a:lem:ramification}(i)--(ii) gives
\[
\rho_i|_E=\id_E
\qquad\text{for all but finitely many }i.
\]
Apply Lemma~\ref{lem:ordered-product} to the continuous
automorphism
\[
G\longrightarrow G,\qquad
g\longmapsto\sigma_0g\sigma_0^{-1},
\]
whose iterates send $\rho$ to $\rho_i$.
We obtain
\[
\eta:=\lim_{m\to\infty}\rho_{-m}\rho_{-m+1}\cdots\rho_m\in G,
\qquad
\sigma_0\eta\sigma_0^{-1}=\eta.
\]
The orbit hypotheses imply that
$\sigma^i(a)\notin\{a,d\}$ whenever $i\ne0$.
Thus every $\rho_i$ with $i\ne0$ fixes $k(x,y)$ pointwise,
and the final assertion of Lemma~\ref{lem:ordered-product}
gives
\begin{equation}\label{a:eq:etay}
\eta(y)=\zeta y.
\end{equation}
Let us set $\tau_1:=\eta^{\,1-j}\tau_0$. 
Since $\eta$ fixes $k(x)$ pointwise, $\tau_1$ extends $\tau$
on $k$ and fixes $x$. Both $\eta$ and $\tau_0$ commute with
$\sigma_0$, so $\tau_1$ also commutes with $\sigma_0$.
Moreover, \eqref{a:eq:tau0y} and \eqref{a:eq:etay} give
\[
\tau_1(y)
=\eta^{\,1-j}(\zeta^j y)
=\zeta^j\zeta^{1-j}y
=\zeta y.
\]
Since $x$ is transcendental over $k$, we also have
$c_0\ne x+b\ne c_1$. Hence
\[
(\Omega,\sigma_0,\tau_1)\models\Phi_{\mathrm{ACFA}}(b).
\]

Finally, Lemma~\ref{a:lem:acfa} embeds this algebraically
closed pair into a model of $S_T$, preserving the formula
$\Phi_{\mathrm{ACFA}}(b)$, which finishes the proof.
\end{proof}
For the remainder of this section, fix $r\ge1$ such that
$\sigma^r$ fixes the tuple $(c_0,c_1,\zeta)$. The result below corresponds to (B) from Section \ref{sec:chatzidakis-argument}.
\begin{lemma}\label{a:lem:obstruction}
For every positive integer $n\equiv1\pmod r$, we have
\[
\sigma^n(b)=b
\qquad\Longrightarrow\qquad
(k,\sigma,\tau)\models\neg\Phi_{\mathrm{ACFA}}(b).
\]
\end{lemma}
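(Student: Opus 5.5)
Argue by contradiction, following Lemma~\ref{comm:lem:period}: a witness $y$ is forced to rotate under $\sigma^n$ by a root of unity, and comparing its behaviour under $\tau$ and $\sigma^n$ yields a contradiction.

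Suppose $\sigma^n(b)=b$ with $n\equiv1\pmod r$, and let $x,y\in k$ witness $\Phi_{\mathrm{ACFA}}(b)$. Put $f:=y^\ell=(x+b-c_0)/(x+b-c_1)$; this is nonzero, so $y\ne0$. Since $\sigma(x)=x$, $\sigma^n(b)=b$, and $n\equiv1\pmod r$, we get
\[
\sigma^n(c_0)=\sigma(c_0),\qquad \sigma^n(c_1)=\sigma(c_1),\qquad \sigma^n(\zeta)=\sigma(\zeta).
\]
We then split into the two cases of the parameter choice.

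In case~(a), $c_0=0$ and $c_1=1$ are fixed by $\sigma$, so $\sigma^n(f)=f$. Hence $\sigma^n(y)=\xi y$ for some $\xi\in\mu_\ell$, say $\xi=\zeta^j$. In case~(b), $\sigma^n$ swaps $c_0$ and $c_1$, so $\sigma^n(f)=f^{-1}$. Hence $\sigma^n(y)=\zeta^j y^{-1}$ for some $j$. Both cases are covered by the single statement
\[
\sigma^n(y)=\zeta^j y^{\varepsilon}.
\]

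Now apply $\tau$ and use commutativity $\tau\sigma^n=\sigma^n\tau$. Recall that $\tau$ fixes $\zeta$, that $\tau(y)=\zeta y$, and that $\sigma^n(\zeta)=\sigma(\zeta)$. On one side,
\[
\tau\sigma^n(y)=\zeta^j\zeta^{\varepsilon}y^{\varepsilon}.
\]
On the other side,
\[
\sigma^n\tau(y)=\sigma^n(\zeta y)=\sigma(\zeta)\,\zeta^j y^{\varepsilon}.
\]
Cancelling $\zeta^j y^{\varepsilon}$, which is nonzero, gives $\sigma(\zeta)=\zeta^{\varepsilon}$. This contradicts \eqref{a:eq:mismatch}.

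The only delicate step is the case-(b) bookkeeping: checking that $\sigma^n$ really inverts $f$, which uses $n$ odd modulo the $2$-cycle. This holds because $r$ was chosen so that $\sigma^r$ fixes $c_0$ and $c_1$. That forces $r$ to be even in case~(b), so $n\equiv1\pmod r$ makes $n$ odd and $\sigma^n$ acts on $\{c_0,c_1\}$ exactly as $\sigma$ does. One should also note that $\ell\ne\charac$ ensures $\mu_\ell$ has exactly $\ell$ elements, all powers of $\zeta$, so $\xi=\zeta^j$ is legitimate.
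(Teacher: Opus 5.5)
Your proof is correct and follows the paper's argument essentially line by line: $\sigma^n$ acts on $c_0,c_1,\zeta$ as $\sigma$ does, so $\sigma^n(y)=\xi y^{\varepsilon}$ with $\xi\in\mu_\ell(k)$ fixed by $\tau$, and comparing $\tau\sigma^n(y)$ with $\sigma^n\tau(y)$ forces $\sigma(\zeta)=\zeta^{\varepsilon}$, contradicting \eqref{a:eq:mismatch}. Your parity remark in case~(b) is correct but unnecessary, since writing $\sigma^{n}=\sigma\circ\sigma^{n-1}$ with $\sigma^{n-1}$ a power of $\sigma^r$ already gives $\sigma^n(c_i)=\sigma(c_i)$ directly.
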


\begin{proof}
Suppose $x,y$ witness $\Phi_{\mathrm{ACFA}}(b)$. In particular, $y\neq0$. Applying $\sigma^n$ to its Kummer equation fixes $x,b$ and acts on $c_0,c_1$ as $\sigma$ does. The fraction from the $\Phi_{\mathrm{ACFA}}$-formula is therefore unchanged if $\varepsilon=1$ and inverted if $\varepsilon=-1$. Hence
\[
 \sigma^n(y)=\xi y^\varepsilon\qquad\text{for some }\ \xi\in\mu_\ell(k).
\]
Because $\tau(\zeta)=\zeta$, the automorphism $\tau$ fixes every element of $\mu_\ell(k)$, including $\xi$. Consequently,
\[
 \tau\sigma^n(y)=\xi\zeta^\varepsilon y^\varepsilon,\qquad
 \sigma^n\tau(y)=\sigma^n(\zeta)\xi y^\varepsilon
                 =\sigma(\zeta)\xi y^\varepsilon.
\]
Since $\sigma\tau=\tau\sigma$ and $y\neq0$, we get $\zeta^\varepsilon=\sigma(\zeta)$, contradicting \eqref{a:eq:mismatch}.
\end{proof}
The next result corresponds to (C) from Section \ref{sec:chatzidakis-argument}.
\begin{lemma}\label{a:lem:obstruction_exists}
There is $(K,\sigma,\tau)\models S_T$ extending
$(A_0,\sigma,\id_{A_0})$, with $\tau=\id_K$, such that
for every $n>0$ there is $b_n\in K$ satisfying
$\sigma^n(b_n)=b_n$ and such that
$b_n,\sigma(b_n),\ldots,\sigma^{n-1}(b_n)$
are algebraically independent over $A_0$.
\end{lemma}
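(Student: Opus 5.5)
We build the required field in the same way as Lemma~\ref{comm:lem:exist}, and then use Lemma~\ref{a:lem:acfa} to pass to a model of $T$ (property~(D)).

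First we construct the algebraic part. Take a family of elements algebraically independent over $A_0$,
\[
\{t_{n,j}\mid n\ge1,\ 0\le j<n\},
\]
and set $B_1:=A_0(t_{n,j}\mid n\ge1,\ 0\le j<n)$. Extend $\sigma$ from $A_0$ to $B_1$ by cyclically permuting each block:
\[
t_{n,j}\mapsto t_{n,j+1}\quad(j<n-1),\qquad t_{n,n-1}\mapsto t_{n,0}.
\]
This is a well-defined automorphism, since it extends an automorphism of $A_0$ and permutes a transcendence basis over $A_0$. Put $\tau:=\id_{B_1}$; it trivially commutes with $\sigma$. Now let $B:=B_1^{\alg}$. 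Extend $\sigma$ to some automorphism $\sigma_B$ of $B$, and take $\tau_B:=\id_B$. These still commute.

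Next we apply Lemma~\ref{a:lem:acfa} with $(A,\sigma_A)=(A_0,\sigma)\models T$ and $(B,\sigma_B,\tau_B)$. This gives an embedding into some $(C,\sigma_C,\tau_C)$ with $(C,\sigma_C)\models T$ and $\sigma_C\tau_C=\tau_C\sigma_C$.

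The difficulty is that the statement asks for $\tau=\id_K$. The automorphism $\tau_C$ produced by Lemma~\ref{a:lem:acfa} extends $\id_B$, but it need not be the identity on all of $C$. So we simply choose the identity ourselves. We set $K:=C$, $\sigma:=\sigma_C$, and $\tau:=\id_C$. The identity commutes with $\sigma_C$, and $(C,\sigma_C)\models T$. Hence $(K,\sigma,\id_K)\models S_T$, and it extends $(A_0,\sigma,\id_{A_0})$. With this choice no homogeneity or partial-elementarity argument is needed. One can even bypass Lemma~\ref{a:lem:acfa} altogether: embed $(B,\sigma_B)$ into any model of $\ACFA$, and model-completeness of $\ACFA$ makes $A_0$ an elementary substructure of it, so it is a model of $T$.

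Finally we set $b_n:=t_{n,0}$. By construction $\sigma^n(b_n)=b_n$, and the elements $b_n,\sigma(b_n),\dots,\sigma^{n-1}(b_n)$ are exactly $t_{n,0},\dots,t_{n,n-1}$. These remain algebraically independent over $A_0$ inside $K$, because algebraic independence is a property of the field embedding and does not depend on the surrounding field. The only point requiring care is the extension of $\sigma$ to $B_1$, which is routine. The lemma requires nothing further.
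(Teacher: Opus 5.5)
Your proposal is correct and is essentially the paper's proof. The paper also adjoins the algebraically independent cycles $b_{n,j}$ and permutes them cyclically, then embeds the resulting difference field into a model of $\ACFA$, uses model-completeness to get a model of $T$, and sets $\tau=\id_K$. That is your ``bypass'' route, so the detour through $B_1^{\alg}$ and Lemma~\ref{a:lem:acfa} is unnecessary, though harmless.
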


\begin{proof}
Let us define
\[
B:=A_0(b_{n,j}:n\geq1,\ 0\leq j<n),
\]
where the $b_{n,j}$ are algebraically independent over $A_0$.  
We extend $\sigma$ from $A_0$ to an automorphism of $B$ by setting
\[
\sigma(b_{n,j})=
\begin{cases}
b_{n,j+1},&j<n-1,\\
b_{n,0},&j=n-1
\end{cases}
\]
and embed $(B,\sigma)$ into a model $(K,\sigma)\models\ACFA$.
Since $(A_0,\sigma)\models T$, model-completeness of $\ACFA$
gives $(K,\sigma)\models T$.

Let us set $\tau=\id_K$ and $b_n=b_{n,0}$ for every $n\geq1$.
Then $\tau$ commutes with $\sigma$, so
$(K,\sigma,\tau)\models S_T$.
Moreover,
\[
\sigma^n(b_n)=b_n,
\qquad
\sigma^j(b_n)=b_{n,j}\quad(0\leq j<n).
\]
The latter elements remain algebraically independent over $A_0$,
as required. In addition, $\tau$ fixes every $b_n$ and all the chosen parameters
from $A_0$.
\end{proof}
\begin{proof}[Proof of Theorem~\ref{a:thm:main}]
We follow the ultrapower argument from
Section~\ref{sec:chatzidakis-argument} and the proof of
Theorem~\ref{b:thm:main}.
Suppose that $S_T$ has a model companion $T_A$.
Since $T$ is model-complete, $S_T$ is inductive, so models
of $T_A$ are existentially closed models of $S_T$.

Take $(K,\sigma,\id_K)$ and the elements $b_n$ from the
construction in Lemma~\ref{a:lem:obstruction_exists},
which supplies (C), and embed this structure into
$(M,\sigma,\tau)\models T_A$.
Put
\[
J:=\{n\ge2:n\equiv1\pmod r\}.
\]
By (B), Lemma~\ref{a:lem:obstruction}, we have
$M\models\neg\Phi_{\mathrm{ACFA}}(b_n)$ for every $n\in J$.

For a nonprincipal ultrafilter $\mathcal U$ on $J$, set
\[
(N,\sigma,\tau):=(M,\sigma,\tau)^J/\mathcal U,
\qquad b:=[b_n]_{\mathcal U}.
\]
Identifying $A_0$ with its diagonal image, \L o\'s's theorem gives
\begin{equation}\label{a:eq:negphi}
N\models T_A,\qquad \tau(b)=b,\qquad
N\models\neg\Phi_{\mathrm{ACFA}}(b).
\end{equation}

For each fixed $m\in\Z\setminus\{0\}$ and every $n\in J$
with $n>|m|$, algebraic independence of the $n$-cycle
over $A_0$ gives
\[
\sigma^m(c_0-b_n)\notin\{c_0-b_n,c_1-b_n\}.
\]
Indeed, either equality would give a nontrivial linear
relation over $A_0$ between $b_n$ and $\sigma^m(b_n)$.
Together with $c_0\ne c_1$, \L o\'s's theorem therefore
shows that $a:=c_0-b$ has infinite $\sigma$-orbit and
$c_1-b$ lies outside that orbit.

Proposition~\ref{a:prop:realization}, supplying (A) together
with the preservation requirement (D), now gives an
$S_T$-extension of $(N,\sigma,\tau)$ satisfying
$\Phi_{\mathrm{ACFA}}(b)$.
Existential closedness forces the same formula to hold
in $N$, contradicting \eqref{a:eq:negphi}.
\end{proof}
\begin{remark}We finish this section with some comments on the scope of the argument
\begin{enumerate}
  \item For some completions of $\ACFA$, $\sigma$ fixes every root of unity. A proof requiring $\sigma(\zeta)\neq\zeta$ would miss these completions. Interchanging $c_0,c_1$ makes $\sigma$ invert the radicand; this replaces the required mismatch by $\sigma(\zeta)\neq\zeta^{-1}$, which holds when $\ell$ is odd and $\sigma(\zeta)=\zeta$.

  \item Finite ramification and trivial restriction of inertia at unramified points are valid more generally. In this proof, however, the coefficient field is algebraically closed because it is the field reduct of a model of $\ACFA$. Algebraic closedness is used for the Hahn construction and for Lemma~\ref{a:lem:acfa}. The proof does not assume that an arbitrary pair of commuting automorphisms extends to an algebraic closure. It uses a particular algebraically closed extension and
modifies one lift by a power of $\eta$.

  \item The theorem concerns a fixed completion of $\ACFA$, so the nonexistence theorem for arbitrary fields with two commuting automorphisms alone is not enough. The distinction and the algebraic-lifting difficulty are discussed in \cite[Section 3]{KikyoSurvey} and \cite[Remark 1.12]{Survey}.
\end{enumerate}
\end{remark}

\section{Model-complete reducts and existentially closed models}
\label{sec:mc-reducts}

The arguments above concern automorphism expansions of prescribed
complete theories. We conclude with some general observations
about completions and existentially closed models.
First, we prove a transfer result for model companions over
model-complete reducts and show that its converse fails even
for automorphism expansions. We then characterize model-completeness
of a complete theory by the existence of a weakly saturated
existentially closed model. Finally, we show that an incomplete
universal theory can have a model-complete completion without
having such a model.

We use the usual general definition: a model companion of a theory $S$ is a
model-complete theory $S^*$ such that every model of either theory
embeds into a model of the other. In particular, we do not assume
$S\subseteq S^*$ in this definition.

\begin{proposition}\label{mc:prop:transfer}
Let $L_0\subseteq L$ be languages, $R$ be a model-complete $L_0$-theory,
and $S$ be an $L$-theory extending $R$. Suppose that $S$ has
a model companion $S^*$. If $\Gamma$ is a set of $L_0$-sentences
such that $S\cup\Gamma$ is consistent, then $S^*\cup\Gamma$ is
a model companion of $S\cup\Gamma$.
\end{proposition}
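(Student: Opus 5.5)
We must show two things: $S^*\cup\Gamma$ is model-complete, and every model of $S\cup\Gamma$ embeds into a model of $S^*\cup\Gamma$ and conversely. Model-completeness is the easy part. $S^*$ is model-complete, and adding sentences preserves model-completeness, since an embedding between models of $S^*\cup\Gamma$ is in particular an embedding between models of $S^*$ and hence elementary. Consistency of $S^*\cup\Gamma$ will follow once we embed a model of $S\cup\Gamma$ into a model of $S^*\cup\Gamma$.

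The key step is the following. If $A\models S\cup\Gamma$ and $A\subseteq B\models S^*$, we want $B\models\Gamma$. For this we first need $B\models R$. Since $A\models S$, $A$ embeds into some model of $S^*$; the real issue is whether every model of $S^*$ satisfies $R$, because we do not assume $S\subseteq S^*$.

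We get this as follows. Given $B\models S^*$, embed $B$ into $C\models S$, and $C$ into $D\models S^*$. By model-completeness of $S^*$, $B\preccurlyeq D$. Now work with $L_0$-reducts: we have $C|_{L_0}\models R$. Iterate this to build a chain
\[
B=B_0\subseteq C_0\subseteq B_1\subseteq C_1\subseteq\cdots
\]
with $B_i\models S^*$ and $C_i\models S$. The $B_i$ form an elementary chain, and the $L_0$-reducts of the $C_i$ form an elementary chain of models of $R$, since $R$ is model-complete. The union $U$ is therefore an elementary extension of $B$ in $L$, and its $L_0$-reduct is an elementary extension of each $C_i|_{L_0}$, hence a model of $R$. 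So $B|_{L_0}\equiv U|_{L_0}\models R$, and every model of $S^*$ satisfies $R$.

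Now, in the situation $A\subseteq B$ above, $A|_{L_0}$ and $B|_{L_0}$ are both models of $R$, and the embedding is elementary by model-completeness of $R$. Since $\Gamma$ consists of $L_0$-sentences true in $A$, we get $B\models\Gamma$.

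For the converse direction, let $B\models S^*\cup\Gamma$ and embed it in $C\models S$. Both $B|_{L_0}$ and $C|_{L_0}$ model $R$ (shown above), so $B|_{L_0}\preccurlyeq C|_{L_0}$ and $C\models\Gamma$. Hence $C\models S\cup\Gamma$.

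The main obstacle is the step showing $S^*\models R$ without assuming $S\subseteq S^*$. The alternating-chain argument above is the device I will use for it, and it relies on model-completeness of both $R$ and $S^*$.
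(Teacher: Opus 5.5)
Your proof is correct, and it differs from the paper only in the key step $S^*\models R$.

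\textbf{The paper's route.} The paper uses a single sandwich $A\subseteq B\subseteq C$ with $B\models S$ and $C\models S^*$, so that $A\preccurlyeq C$. It then invokes the $\forall\exists$-axiomatization of the model-complete theory $R$. For an axiom $\forall\bar x\,\exists\bar y\,\varphi$ and a tuple $\bar a$ from $A$, a witness found in $B$ persists to $C$. Elementarity of $A\preccurlyeq C$ then pulls the existential statement back to $A$.

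\textbf{Your route.} You iterate the sandwich into an $\omega$-chain and apply Tarski's elementary chain theorem twice: in $L$ to the $B_i$, via model-completeness of $S^*$, and in $L_0$ to the reducts of the $C_i$, via model-completeness of $R$. The common union $U$ then satisfies both $B\preccurlyeq U$ and $U|_{L_0}\models R$.

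\textbf{Comparison.} In this step both arguments really only use that $R$ is inductive, that is, closed under unions of chains. The paper uses this in syntactic form. That rests on the standard fact that model-complete theories are $\forall\exists$-axiomatizable, which is usually obtained via the Chang--\L o\'s--Suszko theorem or a chain argument much like yours. Your semantic version derives the needed closure under unions directly from model-completeness and the elementary chain theorem. It is therefore more self-contained, at the cost of an infinite construction in place of a three-term sandwich. The remaining steps coincide with the paper's: transferring $\Gamma$ along embeddings that are elementary on $L_0$-reducts, both directions of mutual embeddability, and model-completeness of $S^*\cup\Gamma$.
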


\begin{proof}
We first show that $S^*\models R$. Given $A\models S^*$, we use the
companion property to obtain the following $L$-embeddings:
\[
  A\subseteq B\subseteq C,
  \qquad B\models S,\qquad C\models S^*.
\]
Since $S^*$ is model-complete, $A\preccurlyeq C$. The
model-complete theory $R$ admits an $\forall\exists$-axiomatization.
Consider such an axiom
\[
\forall\bar x\,\exists\bar y\,\varphi(\bar x,\bar y),
\]
where $\varphi$ is quantifier-free, and let $\bar a$ be an arbitrary tuple
from $A$ of length $|\bar x|$. Since $B\models R$, there is a tuple $\bar b$ from $B$
such that $B\models\varphi(\bar a,\bar b)$. The inclusion $B\subseteq C$ preserves quantifier-free formulas,
so $C\models\varphi(\bar a,\bar b)$ 
and consequently $C\models\exists\bar y\,\varphi(\bar a,\bar y)$.
As $A\preccurlyeq C$ and $\bar a$ lies in $A$, we obtain
$A\models\exists\bar y\,\varphi(\bar a,\bar y)$. Thus $A$ satisfies every axiom of $R$, and hence $A\models R$.

Now embed $M\models S\cup\Gamma$ into $N\models S^*$. Both
$L_0$-reducts satisfy $R$ (since $R\subseteq S$ and $S^*\models R$), so the embedding is elementary on these
reducts. Hence $N\models\Gamma$. Conversely, if
$N\models S^*\cup\Gamma$, embed $N$ into $M\models S$; the
same argument gives $M\models\Gamma$. This proves the mutual
embedding property. Finally, $S^*\cup\Gamma$ is model-complete,
being an extension of a model-complete theory in the same language.
\end{proof}

\begin{corollary}\label{mc:cor:completions}
Let $R$ be model-complete, possibly incomplete. If $R_\tau$ has
a model companion $C$, then $C\cup R'$ is a model companion of
$(R')_\tau$ for every completion $R'$ of $R$. Consequently, if
$(R')_\tau$ has no model companion for some completion $R'$ of
$R$, then $R_\tau$ has no model companion.
\end{corollary}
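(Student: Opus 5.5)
This is a direct application of Proposition~\ref{mc:prop:transfer}. We take $L_0=L_{\mathrm{ring}}$ extended by whatever symbols $R$ uses, and let $L$ be $L_0$ together with the new unary function symbol $\tau$. Put $S:=R_\tau$, the theory $R$ together with the axioms saying that $\tau$ is an automorphism of the underlying structure. Then $S$ is an $L$-theory extending the model-complete $L_0$-theory $R$, and by hypothesis $S$ has the model companion $C$. Given a completion $R'$ of $R$, we take $\Gamma:=R'$, a set of $L_0$-sentences. We observe that $S\cup\Gamma$ is literally the theory $(R')_\tau$: since $R\subseteq R'$, both theories consist of $R'$ together with the automorphism axioms for $\tau$.

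The only hypothesis of the proposition left to check is consistency of $S\cup\Gamma$. This holds because $R'$ is consistent: any model of $R'$, expanded by $\tau=\id$, satisfies $(R')_\tau$. Proposition~\ref{mc:prop:transfer} then says that $C\cup R'$ is a model companion of $(R')_\tau$, which is the first assertion.

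The second assertion is the contrapositive. If $R_\tau$ had a model companion, then by the first part every completion $R'$ would give a theory $(R')_\tau$ with a model companion. Hence failure for a single completion rules out a model companion of $R_\tau$.

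There is no real obstacle here. The only points to watch are the bookkeeping ones: that "$R_\tau$" is meant in the sense fixed before Theorem~\ref{a:thm:main}, namely models of $R$ equipped with an automorphism of the full $L_0$-structure (so that $\Gamma$ is correctly confined to $L_0$), and that Proposition~\ref{mc:prop:transfer} deliberately does not require $S\subseteq S^*$.
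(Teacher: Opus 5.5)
Your proof is correct and is the paper's own argument: apply Proposition~\ref{mc:prop:transfer} with $S=R_\tau$ and $\Gamma=R'$, and get consistency by expanding any model of $R'$ with $\tau=\id$. One small slip: $L_0$ should simply be the language of $R$, not $L_{\mathrm{ring}}$ plus $R$'s symbols, since unconstrained extra symbols can destroy the model-completeness of $R$; nothing else in your argument depends on this.
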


\begin{proof}
Apply Proposition~\ref{mc:prop:transfer} with $S=R_\tau$ and
$\Gamma=R'$. Consistency holds because every model of $R'$ admits
the identity automorphism.
\end{proof}

In the setting of Theorem~\ref{a:thm:main}, take $R=\ACFA$ in
the language containing $\sigma$. An automorphism $\tau$ of this
reduct is precisely a field automorphism commuting with $\sigma$.
Thus the failure of a model companion over a single completion of
$\ACFA$ would already imply its failure over $\ACFA$ itself;
Theorem~\ref{a:thm:main} establishes the stronger assertion for
every completion. In Theorem~\ref{b:thm:main}, the requirement that
the added automorphism fix $\bd$ is likewise incorporated by
including the named constants in the reduct language.

We show that the implication from Corollary \ref{mc:cor:completions} cannot be reversed. We need a result from \cite{HoffmannDynamics} first.
\begin{lemma}\label{mc:lem:permutations}
The theory of infinite sets with a bijection $\tau$ has a model
companion, axiomatized by requiring infinitely many $\tau$-cycles
of each finite positive length.
\end{lemma}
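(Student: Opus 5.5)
Let $T^*$ denote the theory of infinite sets with a bijection $\tau$ having, for every $n\ge1$, infinitely many $\tau$-cycles of exact length $n$. Each such requirement is first-order: for fixed $n$ and $k$ we assert the existence of $k$ elements, each of exact period $n$ (that is, $\tau^n(x)=x$ and $\tau^m(x)\ne x$ for $0<m<n$), lying in pairwise distinct orbits. I must check two things, that $T^*$ is a companion of the theory $S$ of infinite sets with a bijection and that $T^*$ is model-complete.

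The companion property is straightforward. Any model of $T^*$ is already a model of $S$. Conversely, given a model $(X,\tau)$ of $S$, I will adjoin, disjointly, countably many fresh $n$-cycles for every $n$. The result is a model of $T^*$ containing $(X,\tau)$ as a substructure.

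For model-completeness I intend to prove the stronger statement that $T^*$ has quantifier elimination, using the usual back-and-forth criterion. Take $\omega$-saturated models $A,B\models T^*$, a finitely generated substructure $A_0\subseteq A$ (a finite union of orbits, finite or infinite), an embedding $f:A_0\to B$, and a point $a\in A\setminus A_0$. I need to extend $f$ to the orbit of $a$. The orbit of $a$ is disjoint from $A_0$, so there are two cases. If $a$ has finite period $n$, then because $B$ has infinitely many $n$-cycles and $f(A_0)$ meets only finitely many of them, I can send $a$ to a point of an $n$-cycle outside $f(A_0)$. If $a$ has infinite orbit, I must find in $B$ an infinite orbit disjoint from $f(A_0)$. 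I will realize the type $\{\tau^m(x)\ne x : m\ge1\}\cup\{x\ne\tau^j(e) : e\in f(A_0),\ j\in\Z\}$, which involves only finitely many parameters. Any finite part of this type excludes finitely many periods and finitely many points, so it is satisfied by an element of a long finite cycle not in $f(A_0)$. By $\omega$-saturation the whole type is realized. Since the substructure generated by a point is its whole orbit, the extension on that orbit is an isomorphism onto its image, and back-and-forth then gives quantifier elimination.

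The only step needing care is the infinite-orbit case, which is exactly where $\omega$-saturation combined with cycles of unbounded length is used. Lemma~\ref{mc:lem:permutations} is quoted from \cite{HoffmannDynamics}, so it is reasonable simply to cite it, but the argument above is the one I would give.
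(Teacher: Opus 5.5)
Your argument is correct, but it takes a genuinely different route from the paper. The paper gives no direct argument: it applies \cite[Example~2.8]{HoffmannDynamics} with $G=\Z$, noting only that a $\Z$-action is a single bijection and that Hoffmann's axiom scheme for $\Z$ amounts to infinitely many cycles of each finite positive length.

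You prove the statement from scratch, using the fact that every $\Z$-orbit is either a finite cycle or a copy of $\Z$. You obtain the stronger conclusion that the companion eliminates quantifiers. The only step that needs more than counting cycles is the infinite-orbit case, where $\omega$-saturation and cycles of unbounded length give a new free orbit, as you say. The citation buys brevity and places the lemma inside a general theory of model companions of group actions. Your proof buys self-containedness and quantifier elimination.

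Two small points to tidy up. First, you claim that the substructure generated by a point is its whole orbit, so that the orbit of $a\notin A_0$ is disjoint from $A_0$. This presupposes that $\tau^{-1}$ is in the language. With $\tau$ alone, generated substructures are only forward orbits, and $a$ could be $\tau^{-1}$ of a generator of $A_0$. This is harmless. Embeddings between models automatically commute with $\tau^{-1}$. Moreover, an atomic formula $\tau^i(x)=\tau^j(z)$ with $i,j\in\Z$ is equivalent modulo the theory to one with nonnegative exponents. So quantifier elimination and model-completeness in the language $\{\tau,\tau^{-1}\}$ transfer to the language $\{\tau\}$.

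Second, in the type you realize, let $e$ range over finitely many generators of $f(A_0)$ rather than over $f(A_0)$ itself. Then the type is over finitely many parameters and $\omega$-saturation applies literally.
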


\begin{proof}
Apply \cite[Example~2.8]{HoffmannDynamics} with $G=\Z$.
A $\Z$-action is equivalent to a single bijection $\tau$,
and the axiom scheme given there is equivalent to requiring
infinitely many $\tau$-cycles of every finite positive length.
\end{proof}

\begin{example}\label{mc:ex:automorphisms}
There is a model-complete theory $T$ 
such that $(T')_\tau$ has a model companion for every completion
$T'$ of $T$, whereas $T_\tau$ has no model companion.
\end{example}

\begin{proof} 
Let $L=\{P_n\mid n\geq 2\}$ be the language consisting of unary predicate symbols, and let
$\theta_n$ be an $L$-sentence saying that the universe has at least $n$ elements.
Let us define
\[
  T:=\{\forall x\,(P_n(x)\leftrightarrow\theta_n)\mid n\geq 2\}.
\]
Thus if $M\models T$, then $P_n(M)$ is full or empty according to the cardinality of the
universe. Hence for an extension $M\subseteq N$ of models of $T$ we have:
\begin{itemize}
  \item if $M$ is finite, then $M=N$;
  \item if $M$ is infinite, then for all $n\geq 2$ we have $P_n(M)=M$ and $P_n(N)=N$, so $M\preccurlyeq N$ by the theory of infinite pure sets.
\end{itemize}
Therefore $T$ is model-complete.

The completions of $T$ are exactly
\[
  T_m=T\cup\{\theta_m,\neg\theta_{m+1}\}\quad 
       \text{for $m\geq 1$},\qquad
  T_\infty=T\cup\{\theta_n:n\geq 2\}.
\]
Every embedding between models of $(T_m)_\tau$ is an isomorphism,
so $(T_m)_\tau$ is already model-complete. The theory
$(T_\infty)_\tau$ has the companion from
Lemma~\ref{mc:lem:permutations}, together with the axioms making
every $P_n$ full.

Suppose that $TA$ were a model companion of $T_\tau$. For any $m\geq 1$, let $A_m\models T_\tau$ be
the $m$-element model of $T$ with $\tau=\id$. There are embeddings
\[
  A_m\hookrightarrow B\hookrightarrow D,
  \qquad B\models TA,\qquad D\models T_\tau.
\]
The composite preserves all cardinality thresholds, so $|D|=m$.
It follows that the first embedding is an isomorphism, and hence
$A_m\models TA$.

For a nonprincipal ultrafilter $\mathcal V$ on the positive integers,
the ultraproduct
\[
  A=\prod_{m\geq 1}A_m/\mathcal V
\]
is an infinite model of $TA$ with every $P_n$ full and $\tau=\id$.
Adjoining a disjoint two-element $\tau$-cycle, and keeping all
predicates full, gives an extension $E\models T_\tau$. Embed $E$
into $B'\models TA$. The induced embedding $A\hookrightarrow B'$
is not elementary, since
\[
  A\models\forall x\,\tau(x)=x,
  \qquad B'\models\exists x\,\tau(x)\neq x.
\]
This contradicts model-completeness of $TA$.
\end{proof}

We now consider existential closedness relative to an arbitrary
theory $T$. The following result was pointed out to us by Udi Hrushovski.
\begin{proposition}\label{mc:prop:saturated-ec}
Let $T$ be a theory and $M$
be an existentially closed model of $T$. Suppose that $M$ is ``weakly saturated'', that is it
realizes every complete type over $\varnothing$ in finitely many
variables consistent with $\Th(M)$. Then $\Th(M)$
is model-complete.
\end{proposition}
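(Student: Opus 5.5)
The plan is to use the standard criterion for model-completeness: a theory $T'$ is model-complete if and only if every formula is equivalent modulo $T'$ to an existential formula. Equivalently, every formula is equivalent to a universal one. Put $T'=\Th(M)$; since it is complete, it suffices to show the following. For every $n$ and every complete type $p(\bar x)$ over $\varnothing$ consistent with $T'$, and every formula $\varphi(\bar x)\in p$, there is an existential formula $\psi\in p$ with $T'\models\psi\to\varphi$. A compactness argument then turns this into equivalence with a finite disjunction of existential formulas, which is again existential.

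The key step uses weak saturation: $p$ is realized in $M$, say by $\bar a$. Let $q(\bar x)$ be the set of existential formulas true of $\bar a$ in $M$. I claim that $T'\cup q(\bar x)\models p(\bar x)$.

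To prove the claim, suppose $\bar b$ realizes $q$ in some $N\models T'$. Then $\bar b$ satisfies every existential formula satisfied by $\bar a$ in $M$. By the usual diagram and compactness argument, this yields an embedding of $M$ into an elementary extension $N'$ of $N$ sending $\bar a\mapsto\bar b$. In more detail, the existential diagram of $M$ over $\bar a$, taken together with $\Th(N,\bar b)$, is consistent because each finite piece is an existential formula of $\bar a$.

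Now use existential closedness. $M$ is existentially closed for $T$, and $N'\models T'$; I need $N'\models T$ in order to invoke it, so I intend to apply the existential closedness of $M$ in $T$-extensions. At this point there is a subtlety, since $T$ need not be contained in $T'=\Th(M)$. But $M\models T$, so $T\subseteq\Th(M)=T'$, and hence $N'\models T$.

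Existential closedness of $M$ then says that every existential formula with parameters in $M$ true in $N'$ is true in $M$. Consequently every universal formula true of $\bar a$ in $M$ remains true of $\bar b$ in $N'$, and hence in $N$. This shows that $\bar a$ and $\bar b$ agree on existential and universal formulas. It does not immediately give agreement on all formulas.

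To get full agreement, I would instead show directly that the complete type $\operatorname{tp}(\bar b)$ equals $p$. Weak saturation realizes $\operatorname{tp}^N(\bar b)$ by some $\bar a'$ in $M$. Now $\bar a'$ satisfies $q$, hence every existential property of $\bar a$. By the embedding argument above, applied back inside $M$, every existential formula true of $\bar a'$ is true of $\bar a$. So $\bar a$ and $\bar a'$ have the same existential type in $M$.

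It remains to show that in an existentially closed $M$ the existential type determines the complete type. This is the main obstacle. I would attempt an induction on formula complexity and plan to handle it by a back-and-forth. Given $\bar a,\bar a'$ with the same existential type, the embedding argument gives $M\to M^*\succcurlyeq M$ with $\bar a\mapsto\bar a'$, and this map is existentially closed by existential closedness. The aim is to iterate, alternating sides, to build a chain whose union witnesses elementary equivalence of $(M,\bar a)$ and $(M,\bar a')$. The alternating chain of existential embeddings makes the union an elementary extension of both, by the standard argument that a union of a chain of existential embeddings between models of $\Th(M)$ alternating in this way is elementary.

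Once $\operatorname{tp}(\bar a)=\operatorname{tp}(\bar a')=\operatorname{tp}^N(\bar b)$, the claim holds, and compactness yields $\psi$.
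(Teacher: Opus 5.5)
Your reduction to the claim $T'\cup q\models p$ is fine, and so is your argument that $\bar a$ and $\bar a'$ have the same existential type in $M$. The gap is in the step you flag as the main obstacle.

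The alternating chain you describe is the proof of Robinson's test, and that proof needs \emph{every} embedding in the chain to be existential. Only the first one, $M\to M^*$, is known to be existential, because it starts at $M$, which is existentially closed among models of $T$. The next step has to start from $M^*$. You would need $M^*\to A_1$ to be existential, that is, $M^*$ (an elementary extension of $M$) would have to be existentially closed among models of $T$. Existential closedness is not preserved by elementary extensions in general. Run the argument at the end of Section~\ref{sec:chatzidakis-argument} with an arbitrary existentially closed model containing $K_0$: its ultrapower is an elementary extension in which $\Phi_{\mathrm{comm}}(b)$ fails, although it holds in an extension. Note also that weak saturation is never used in your homogeneity step. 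As written, that step would therefore have to hold for every existentially closed model, and nothing in your argument justifies this.

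What is missing is a way to use weak saturation to control models of $\Th(M)$ other than $M$ itself. The paper does exactly this, and then needs no homogeneity statement. Take $A\models\Th(M)$, $A\subseteq B\models T$, and an existential $\varphi(\bar a)$ true in $B$, and realize $\operatorname{tp}^A(\bar a)$ by $\bar a'$ in $M$. Each finite piece $\delta(\bar a',\bar m)$ of $\operatorname{Diag}_{\mathrm{qf}}(M)$ gives the existential formula $\exists\bar y\,\delta(\bar a',\bar y)$, which holds of $\bar a$ in $A$. Its witnesses in $A\subseteq B$ show that $T\cup\operatorname{Diag}_{\mathrm{qf}}(M)\cup\{\varphi(\bar a')\}$ is consistent. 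Existential closedness of $M$, together with equality of types, then gives $A\models\varphi(\bar a)$. Hence every model of $\Th(M)$ is existentially closed among models of $T$, and all embeddings between such models are existential. Robinson's test, which is your chain now legitimately applied, finishes the proof.

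Alternatively, you can keep your framework and prove the homogeneity claim by induction on prenex complexity, invoking weak saturation at every step. Suppose $M\models\psi(\bar a,\bar d)$ with $\psi$ a $\Pi_k$ formula. Realize in $M$ a completion of $\operatorname{tp}(\bar a')(\bar x)$ together with the existential type of $(\bar a,\bar d)$ in the variables $(\bar x,\bar y)$. This set is finitely satisfiable because $\bar a'$ satisfies every $\exists\bar y\,\chi(\bar x,\bar y)$ true of $\bar a$. The realization $(\bar a'',\bar d'')$ satisfies $\bar a''\equiv\bar a'$ and, by your maximality argument, has the same existential type as $(\bar a,\bar d)$. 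The induction hypothesis gives $M\models\psi(\bar a'',\bar d'')$, hence $M\models\exists\bar y\,\psi(\bar a',\bar y)$.
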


\begin{proof}
Put $S=\Th(M)$. Let $A\models S$, let $A\subseteq B\models T$,
and suppose that $B\models\varphi(\bar a)$, where $\varphi$ is
existential and $\bar a$ is a finite tuple from $A$. Let us choose
$\bar a'$ from $M$ such that
\[
  \operatorname{tp}_M(\bar a'/\varnothing)
  =\operatorname{tp}_A(\bar a/\varnothing).
\]
We claim that the theory
\[
T':= T\cup\operatorname{Diag}_{\mathrm{qf}}(M)\cup\{\varphi(\bar a')\}
\]
is consistent.

Let us take 
$\delta(\bar a',\bar m)\in \operatorname{Diag}_{\mathrm{qf}}(M)$. Since
$M\models\exists\bar y\,\delta(\bar a',\bar y)$, equality of
types gives $A\models\exists\bar y\,\delta(\bar a,\bar y)$.
Witnesses in $A$ still satisfy $\delta$ in $B$. Interpreting the
finitely many diagram constants by these witnesses and $\bar a$
therefore realizes the finite diagram conditions together with
$T$ and $\varphi(\bar a')$ in $B$, proving the claim.

A model of $T'$ yields an extension $N\supseteq M$ satisfying
$T$ and $\varphi(\bar a')$. Existential closedness gives
$M\models\varphi(\bar a')$, and equality of types gives
$A\models\varphi(\bar a)$. Thus every $A\models S$ is
existentially closed among models of $T$. In particular, every
embedding between models of $S$ is existential, so Robinson's
test implies that $S$ is model-complete.
\end{proof}

\begin{corollary}\label{mc:cor:saturated-completion}
If $T$ has a weakly saturated model $M$ which is
existentially closed among models of $T$, then $\Th(M)$ is a
model-complete completion of $T$.
\end{corollary}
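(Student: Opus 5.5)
This follows almost directly from Proposition~\ref{mc:prop:saturated-ec}; what remains is to check that $\Th(M)$ is a completion of $T$. Let $M$ be a weakly saturated model of $T$ that is existentially closed among models of $T$, and put $S:=\Th(M)$.

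\emph{Step 1: $S$ is a completion of $T$.} $S$ is complete, being the theory of a single structure. Since $M\models T$, every sentence of $T$ holds in $M$, so $T\subseteq S$.

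\emph{Step 2: $S$ is model-complete.} The hypotheses of Proposition~\ref{mc:prop:saturated-ec} are exactly that $M$ is existentially closed among models of $T$ and realizes every complete type over $\varnothing$ in finitely many variables consistent with $\Th(M)$. Applying it gives model-completeness of $\Th(M)=S$.

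The only point requiring care is that Proposition~\ref{mc:prop:saturated-ec} is stated for existential closedness relative to $T$, the same theory as in the corollary, so no change of ambient class is needed. I expect no real obstacle. The content lies in the proposition itself: it shows that every model $A$ of $S$ is existentially closed in extensions that are models of $T$, and in particular in extensions that are models of $S$, so Robinson's test applies. The corollary only records the two facts above together.
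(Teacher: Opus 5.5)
Your proposal is correct and is essentially the paper's argument: the paper states the corollary without a separate proof, as an immediate consequence of Proposition~\ref{mc:prop:saturated-ec} together with the fact that $\Th(M)$ is complete and contains $T$ because $M\models T$.
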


For complete theories, the preceding implication yields an exact
characterization of model-completeness.

\begin{corollary}\label{mc:cor:complete-case}
If $T$ is a complete theory, then $T$ is model-complete if and only if $T$ has a
weakly saturated model which is existentially closed.
\end{corollary}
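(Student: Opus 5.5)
We prove the two implications separately. The forward direction, that a model-complete complete theory has a weakly saturated existentially closed model, rests on two standard facts. First, any complete theory $T$ has a weakly saturated model: an $\aleph_0$-saturated model suffices, or a model realizing each of the (at most $2^{|L|+\aleph_0}$ many) complete types over $\varnothing$ in finitely many variables, built by compactness and the downward L\"owenheim--Skolem theorem. Second, every model of a model-complete theory $T$ is existentially closed among models of $T$. Indeed, if $M\subseteq N$ are models of $T$ and $N\models\varphi(\bar a)$ with $\varphi$ existential and $\bar a$ from $M$, then model-completeness makes $M\preccurlyeq N$, so $M\models\varphi(\bar a)$. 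Combining the two facts, the weakly saturated model is itself existentially closed, which settles this direction.

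For the converse, suppose $T$ is complete and $M\models T$ is weakly saturated and existentially closed among models of $T$. Proposition~\ref{mc:prop:saturated-ec} then gives that $\Th(M)$ is model-complete. Since $M\models T$ and $T$ is complete, $\Th(M)=T$, so $T$ is model-complete. Equivalently, this direction is Corollary~\ref{mc:cor:saturated-completion} specialized to complete $T$: the only completion of $T$ is $T$ itself.

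No step is a real obstacle; the converse is carried entirely by Proposition~\ref{mc:prop:saturated-ec}. The one point to state explicitly is that ``existentially closed'' is meant relative to the class of models of $T$, matching the hypothesis of that proposition. The existence of a weakly saturated model is standard, and we will cite it rather than reprove it. If a construction is wanted, we will take an elementary chain of length $\omega$ that realizes all types over $\varnothing$, each of which is consistent with the complete theory $T$.
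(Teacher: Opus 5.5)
Your proposal is correct and follows essentially the same route as the paper. For the forward direction you take an $\aleph_0$-saturated model and note that model-completeness makes every model existentially closed; for the converse you apply Proposition~\ref{mc:prop:saturated-ec} (equivalently Corollary~\ref{mc:cor:saturated-completion}) and use completeness to identify $\Th(M)$ with $T$. That identification holds only up to logical equivalence unless $T$ is deductively closed, which is how the paper phrases it.
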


\begin{proof}
Suppose first that $T$ is model-complete. Every embedding between
models of $T$ is elementary, so every model of $T$ is existentially
closed among models of $T$. Choosing an $\aleph_0$-saturated
model gives the required model.

Conversely, suppose that $M\models T$ is weakly saturated 
and existentially closed among models of $T$.
By Corollary~\ref{mc:cor:saturated-completion}, $\Th(M)$ is
model-complete. Since $T$ is complete, it is logically equivalent
to $\Th(M)$, and hence is model-complete.
\end{proof}

\begin{remark}
For every infinite cardinal $\kappa$,
Corollary~\ref{mc:cor:complete-case} remains true with
``weakly saturated'' replaced by ``$\kappa$-saturated''.
Indeed, every consistent theory has a $\kappa$-saturated model,
and if $T$ is model-complete, every model of $T$ is existentially
closed among models of $T$. Conversely, $\kappa$-saturation
implies weak saturation.
\end{remark}

Finally, we observe that the converse of Corollary~\ref{mc:cor:saturated-completion}
fails, even when $T$ is universal.

\begin{example}\label{mc:ex:universal}
There is a universal theory $T$ in a countable language,
with no finite models, which has a model-complete completion
but no weakly saturated existentially closed model.
\end{example}

\begin{proof}
Let $L=\{E\}\cup\{c_n\mid n<\omega\}$, where $E$ is a binary predicate symbol and
the $c_n$ are constant symbols. Let $T$ assert that $E$ is an equivalence
relation, that the constants are pairwise distinct, and that
\[
  \forall x\,(E(x,c_n)\longrightarrow x=c_n)
  \qquad \text{for $n<\omega $}.
\]
These are universal axioms, and every model is infinite. The theory
\[
  S=T\cup\{\forall x\,\forall y\,
                  (E(x,y)\leftrightarrow x=y)\}
\]
is a model-complete completion of $T$: it is the theory of an
infinite pure set with distinct named constants and $E$ defined
as equality. Quantifier elimination for infinite pure sets gives
quantifier elimination here, and the prescribed inequalities
between constants decide all quantifier-free sentences.

Now let $M\models T$ be weakly saturated. Consider the
partial type over the empty set
\[
  p(x)=\{x\neq c_n:n<\omega\}
       \cup\{\forall y\,(E(x,y)\longrightarrow y=x)\}.
\]
Every finite subset of $p(x)$ is realized in $M$ by some
$c_m$ whose index is not among those excluded.
By compactness, $p(x)$ extends to a complete type over
$\varnothing$ consistent with $\Th(M)$.
Weak saturation therefore gives a realization $a\in M$
of $p(x)$.

We adjoin a new element $b$ to the class of $a$, leaving all other
classes unchanged. The resulting extension $N$ still
satisfies $T$, but
\[
  N\models\exists y\,(E(a,y)\land y\neq a),
  \qquad
  M\models\neg\exists y\,(E(a,y)\land y\neq a).
\]
Hence $M$ is not an existentially closed model of $T$.
\end{proof}

\end{document}